\theoremstyle{definition}
\numberwithin{equation}{section}
\theoremstyle{plain}
\newtheorem{lemma}{Lemma}[section]
\newtheorem{proposition}[lemma]{Proposition}
\newtheorem{theorem}[lemma]{Theorem}
\newtheorem{corollary}[lemma]{Corollary}
\theoremstyle{definition}
\newtheorem{definition}[lemma]{Definition}
\newenvironment{assumption}[1]{\assumptionalt}{\endassumptionalt}
\newtheorem{remark}[lemma]{Remark}
\newtheorem{example}[lemma]{Example}
\def\P{{\mathbb P}} 
\def\R{{\mathbb R}}
\def\E{{\mathbb E}}
\def\N{{\mathbb N}}
\DeclareMathOperator{\Dom}{Dom}
\DeclareMathOperator{\Cov}{Cov}
\providecommand{\eps}{\varepsilon}
\renewcommand{\theta}{\vartheta}
\begin{document}


\title{Parameter estimation from local measurements for a class of stochastic Burgers equations}

\author{Josef Jan\'ak\\ University of Pavia,  Italy\\ Email: josefjanak@seznam.cz \and Enrico Priola\\ University of Pavia, Italy\\Email: enrico.priola@unipv.it}

\maketitle

\begin{abstract} 
We deal with a class of semilinear SPDEs driven by space–time white noise that includes the one dimensional stochastic Burgers equation. Such equations can have nonlocal and quadratic nonlinearities. We consider the problem of estimation of the diffusivity parameter in front of the second-order spatial derivative. Based on local observations in space, we study the estimator derived in ({\it Ann. Appl. Probab.} {\bf 31} (2021) 1--38) for linear sto\-chas\-tic heat equation that has also been used in ({\it Bernoulli} {\bf 29} (2023) 2035--2061) to cover certain class of semilinear SPDEs including stochastic Burgers equations driven by trace class noise. The space-time white noise case we consider has also relevant physical motivations. After we establish new regularity results for the solution, we are able to show that our proposed estimator is strongly consistent and asymptotically normal.
\end{abstract}
 
\noindent {\small {\it Keywords:} 
Generalized stochastic Burgers equation; space-time white noise; local meas\-ure\-ments; diffusivity estimation; augmented MLE; central limit theorem. \\
2020 MSC: 60H15, 62F12, 35R60}
 


\section{Introduction}

We consider estimation of the diffusivity parameter $\theta>0$ in a class of generalized stochastic Burgers equation driven by space-time white noise:
\begin{equation} \label{eq:Burgers}
\begin{cases}
dX(t) &= \theta \partial_{xx}^2 X(t) \, dt + \frac{1}{2} \partial_x \left( X^2(t) \right) \, dt + F(X(t)) \, dt + dW(t), \\
X(0) &= X_0, \\
X(t)|_{\partial \Lambda} &= 0, \quad 0 < t \leq T.
\end{cases} 
\end{equation}
Here $W$ stands for a cylindrical Wiener process on $L^2(\Lambda)$, where $\Lambda = (0,1) \subset \R$, $dW(t)/dt$ is also referred to as space-time white noise. The nonlinearity $F$ is locally Lipschitz and can have a quadratic growth. When $F=0$ we obtain the (classical) stochastic Burgers equation. We consider Dirichlet boundary conditions and a continuous and  deterministic initial condition $X_0$ defined on $\bar \Lambda$. Note that \eqref{eq:Burgers} is often written as
\begin{equation} \label{bur0}
\begin{cases} 
dX(t,x) &= \theta \frac{\partial^2}{\partial x^2} X(t, x) \, dt + \frac{1}{2} \frac{\partial}{\partial x} X^2(t, x) \, dt + F(X(t, \cdot))(x) \, dt + dW_t(x), \\
X(0,x) &= X_0(x), \quad x \in (0, 1) = \Lambda, \\
X(t,0) &= X(t,1) = 0, \quad 0 < t \leq T,
\end{cases}
\end{equation}
and the same equation with the nonlinear term $- \frac{1}{2} \frac{\partial}{\partial x} X^2(t, x) \, dt$ can be treated similarly. The stochastic Burgers equation has an important role in fluid dynamics and has been studied by several authors. We mainly refer to the works \cite{DPDT} and \cite{gatarekDap}, where the existence and uniqueness of the global solution as well as the existence of an invariant measure has been established; see also Chapter 14 from \cite{DPZ96}. We also  recall \cite{DPD}, where moment estimates for the solution were established. Generalized stochastic Burgers equations like \eqref{eq:Burgers} are studied in the literature under different assumptions (see, for instance, \cite{gyongy}, \cite{gyongyrovira}, \cite{kim}). Interesting applications of generalized Burgers equations are mentioned in \cite{tersenov}. 

We first establish existence and uniqueness of regular solutions to \eqref{eq:Burgers} (see Proposition  \ref{prop:regulXtilde}) and prove moment estimates (or integrability properties) for the solution (see Proposition \ref{prop:L2 integrability}). The following one is  an example of $F$ we can treat:
$$ 
F(u)(x) = - c_0 \, u(x) |u(x)| \, + \, u(x) \, h \big( \| u\|_{L^2(\Lambda)} \big), \quad x \in \Lambda, \, a.e.,
$$
$u \in L^2(\Lambda)$, $h$ is any bounded $C^1$-function, $c_0 > 0$ (see also hypothesis \eqref{bb} and Examples \ref{11}, \ref{12} and \ref{c44}). It seems that even well-posedness of \eqref{eq:Burgers} under our assumptions is not covered by the results in the literature. In particular our generalization of the stochastic Burgers equation leads in a different way than \cite{gyongy}; see also Remark \ref{gy}. We mention other types  of generalized stochastic Burgers equations in Remark \ref{open}.   
  
The estimation of the diffusivity (or the drift part) has become a standard inference problem for stochastic partial differential equations (SPDEs) {in both parametric and nonparametric setting, but it is worth noting that parameter estimation for the stochastic Burgers equation has not been much investigated. In that regard, we may mention the work \cite{CGH}, where statistical inference for the very related Navier-Stokes equations is studied, and \cite{PS}, where stochastic Burgers equation driven by a~trace class noise fits into a class of semilinear stochastic evolution equations.

In general, the classical methods} mostly rely on the observations in the Fourier space over some time interval (with the number of modes $N \rightarrow \infty$ in the so-called spectral approach, see, e.g., \cite{HR}) or on the ergodic properties of the solution (with $T \rightarrow \infty$, {see, e.g., \cite{Ku}}) or on discrete sampling (see, e.g., \cite{BT19}, \cite{BT20} and \cite{HT}).  However in this paper we continue to expand the methodology that is based on local measurements. This novel approach was first introduced in \cite{AR} and relies on a local observation in space of the solution against some small ``observational window''. That is represented by a function called kernel $K$ that is scaled and localized around certain observation point $x_0 \in \Lambda$. The observational data then comes in the form of convolution $\left\langle X(t), K_{\delta, x_0} \right\rangle_{L^2(\Lambda)}$ and the asymptotics is studied for $\delta \rightarrow 0^+$.

The augmented maximum likelihood estimator (augmented MLE) of $\theta$ was introduced in \cite{AR} for linear SPDEs in nonparametric setting. Since then, it was used in the parametric setting to experimental data from cell biology (see \cite{ABJR} and note that such usage is the practical motivation of local measurements) and from the theoretical point of view, the estimator is remarkably flexible and robust to some misspecifications. For instance, \cite{JR} studied the augmented MLE and its variants in the case of stochastic heat equation driven by the multiplicative noise, while the authors in \cite{ACP} focused on a case of semilinear SPDEs. Their work covers a wide range of nonlinearities (such as stochastic reaction-diffusion equations) assuming that the SPDE is driven by additive noise $B \, dW(t)$, where $B^*$ scales as the fractional operator $(- \partial_{xx}^2)^{-\gamma}$. However, the classical stochastic Burgers equation with $F=0$ in \eqref{eq:Burgers} is still in some sense a~``borderline case'' of their presented theory. The augmented MLE is strongly consistent, but the asymptotic normality needed to be analyzed separately and only in the case of a trace class noise: $(- \partial_{xx}^2)^{- \gamma} \, dW(t)$ for $\gamma > 1/4$ (that is the case when $B$ is Hilbert-Schmidt). This was the starting point of our work.  

In particular we study the augmented MLE for classical stochastic Burgers equation  that is driven by space-time white noise (i.e., the case when $\gamma = 0$). We believe that this case is  challenging and from the theoretical point of view also very interesting (for physical motivations see \cite{bertini}, \cite{1} and \cite{2}; for numerical approximations see \cite{hairer}). When $F=0$ our results and arguments can be further extended to cover the case of spatially ``smoothed'' additive noise $(- \partial_{xx}^2)^{- \gamma} \, dW(t)$ with $\gamma \in [0, 1/4]$ but, in fact, to cover all $\gamma \geq 0$. However, this  would require extra effort. Among other things, one should revise all the proofs in  Appendix for such general $\gamma$, $d = 1$ and $B = (- \partial_{xx}^2)^{- \gamma}$. 

At first, we consider the notion of mild solution to our equation \eqref{eq:Burgers} using so-called ``splitting technique'', where we split the solution to the linear part $\bar{X}$ and the nonlinear part $\widetilde{X}$. For the analysis of a (possible) asymptotic bias, we need to assert the regularities of both parts. The results on regularity of the stochastic convolution $\bar{X}$ are basically known, but for the nonlinear part $\widetilde{X}$, the results seem to be new; see in particular  Propositions \ref{prop:regulXtilde} and \ref{prop:L2 integrability}.
  
For the analysis of the error $\delta^{-1} \left( \hat{\vartheta}_{\delta} - \vartheta \right)$, we follow general ideas from \cite{ACP} and its supplement, where many auxiliary lemmas (e.g., for scaling) and propositions do hold true even in our case $\gamma = 0$ (see Section \ref{subsection:resultsACP} in Appendix). It must be noted though, that we use different proof techniques to adjust to our space-time white noise case  and to clarify some points of the original paper; see also Remark \ref{altri}. We believe that Lemma \ref{lemma:U3} is pivotal to our parameter estimation. In its proof we show  the expansion of the non-Gaussian process
$\widetilde{X}(t, x_0)= \widetilde{X}(t) (x_0)$ in the form $\sum_{i=1}^{\infty} b_i(t) \nu_i$. That requires subtle care, especially in building a~separable Hilbert space $L^2(\Omega, \mathcal F', \P)$ with its orthonormal basis $(\nu_i)_{i=1}^{\infty}$ (we are considering  a suitable $\sigma$-field $\mathcal F'\subset \mathcal F$). Note that we are not able to apply the classical Karhunen-Lo\`eve expansion in our case due to the low regularity of $\widetilde{X}$ (see also  Remark \ref{Karhunen}).

In conclusion, we find that the augmented MLE is asymptotically normal, asymptotically unbiased and with the rate of convergence and the asymptotic variance that align to previous results from the theory of estimation using local measurements. The precise statement is formulated in Theorem \ref{thm:main}. Based on the asymptotic result, we can deduce (data-driven) confidence intervals for $\theta$. We mention that numerical simulations for stochastic Burgers equation \eqref{eq:Burgers} driven by space-time white noise are proposed in Section 4 of \cite{ACP} (see Figure 1 therein, where the white noise is multiplied by a factor $\sigma = 0.05$).

Our paper is organized as follows. The notation and the exact setting is introduced in Section \ref{sec:model}, where we also discuss the notion of  mild and weak solution. The results on the regularity of the nonlinear part $\widetilde{X}$ can be also found here. In Section \ref{sec:mainresults} we construct the estimator, revisit assumptions of the model and present the main asymptotic results. The proofs and auxiliary results are delegated to Section \ref{sec:proofs} and Appendix, where they are assorted by the topic. For the sake of completeness, we also provide the most relevant results from \cite{ACP} that are revisited and adjusted to our setting in Section \ref{subsection:resultsACP} in Appendix.

\section{The model}\label{sec:model}

\subsection{Notation}\label{subsection:notation}
Consider $\Lambda = (0,1)$ and the space $L^2(\Lambda)$ equipped with the usual $L^2$-norm $\| \cdot \| := \| \cdot \|_{L^2(\Lambda)}$ and the scalar product $\left\langle \cdot, \cdot \right\rangle := \left\langle \cdot, \cdot \right\rangle_{L^2(\Lambda)}$. Even though the set $\Lambda$ is only one dimensional, we use the standard Laplace operator notation 
$$
\Delta u = \partial_{xx}^2 u = u''
$$
for a function $u$ which is regular enough. We denote $(\lambda_n, e_n)_{n=1}^{\infty}$ the eigensystem of the positive, self-adjoint operator $- \Delta$ on $\Lambda$ with Dirichlet boundary conditions. That is
$$
\lambda_n = \pi^2 n^2, \quad e_n(x) = \sqrt{2} \sin(nx \pi), \quad x \in (0,1).
$$
To describe higher regularities, we consider for $s \geq 0$ the fractional Laplacians $(- \Delta)^{s/2}$ and denote their domains by $H^s = H^s(\Lambda)$ with the norm $\| \cdot \|_s := \| \cdot \|_{H^s} := \| (- \Delta)^{s/2} \cdot \|_{L^2(\Lambda)}$. The Hilbert spaces
$$ 
H^s = \Dom \left( (- \Delta)^{s/2}) \right) = \Big\{ u \in L^2(\Lambda); \, \sum_{n=1}^{\infty} \lambda_n^s u_n^2 = \sum_{n=1}^{\infty} \lambda_n^s \left\langle u, e_n \right\rangle^2 < \infty \Big\}
$$
are equipped with the scalar product $\left\langle u, v \right\rangle_{H^s} = \sum_{n=1}^{\infty} \lambda_n^s u_n v_n$ for $u, v \in H^s$. We also set
$$
H := L^2(\Lambda)
$$
and by $B(0,n)$ we denote the closed ball in the space $H$ with the radius $n$, that is $B(0,n) = \{ u \in H; \, \| u \| \leq n \}$.
Recall that $H^1 =H_0^1$ with equivalence of norms, where $H_0^1$ denotes  the space of all $u \in H$ with weak derivative $u' \in H$ and such that $u(x)=0$ for $x\in\partial\Lambda$ (cf. Section 1.3 in \cite{Henry}, or \cite{hairer}).

For $s \in \R$ and $p \ge 2$ we consider also $W^{s,p}(\Lambda) := \{u \in L^p(\Lambda); \, \| u \|_{s,p} < \infty \}$, where $\| \cdot \|_{s,p} := \| (-\Delta)^{s/2} \cdot \|_{L^p(\Lambda)}$. These are fractional Sobolev spaces associated to $\Delta$ defined as Bessel potentials spaces. The space $C(\bar{\Lambda})$ of all continuous functions on $\bar{\Lambda}$ is equipped with the supremum norm $\| \cdot \|_{\infty}$. The spaces $C^{k, \alpha}(\Lambda)$ for $k \geq 0$ integer and $0 < \alpha < 1$ are  H\"older spaces equipped with usual norms.

We fix a constant $\theta > 0$, that is the parameter of interest, and denote by $(S_{\theta}(t), t \geq 0)$ the strongly continuous semigroup on $L^2(\Lambda)$ generated by $\theta \Delta$.

Throughout this work we fix a finite time horizon $T > 0$ and let $(\Omega, \mathcal F, (\mathcal F_t)_{t \geq 0}, \mathbb P)$ be a stochastic basis (satisfying the usual hypotheses) with a~cylindrical Wiener process $W$ on $L^2(\Lambda)$ (cf. \cite{DPZ}). The process $W(t)$ is formally given by ``$W(t) = \sum_{n = 1}^{\infty} \beta_n(t) e_n$'' where $(\beta_n)$ are independent one di\-men\-sio\-nal Wiener processes adapted to the previous filtration; we are using the above orthonormal basis $(e_n)$ in $L^2(\Lambda)$.

Throughout, all equalities and inequalities, unless otherwise mentioned, will be understood in the $\P$-a.s. sense. By $A_\delta \lesssim B_\delta$ we mean that there exists some constant $C > 0$ such that $A_\delta \leq C B_\delta$ for all values $\delta$ under consideration. Here, we work with $\delta\in(0,1)$ or with the convergence $\delta\to 0$. Convergence in pro\-ba\-bi\-li\-ty and convergence in distribution are denoted by $\stackrel{\mathbb P}{\rightarrow}$ and $\stackrel{d}{\rightarrow}$, respectively. The symbol $A_\delta = O_{\mathbb P}(B_\delta)$ for random variables $A_{\delta}, B_{\delta}$ means that $A_\delta / B_\delta$ is tight, that is, $\sup_\delta \mathbb P (|A_\delta| > C |B_\delta|) \rightarrow 0$ as $C \rightarrow \infty$. The notation $A_\delta = o_{\mathbb P}(B_\delta)$ stands for $A_\delta / B_\delta \stackrel{\mathbb P}{\rightarrow} 0$ as $\delta \rightarrow 0$.

\subsection{The generalized stochastic Burgers equation}

We study the generalized stochastic Burgers equation \eqref{eq:Burgers} driven by a cylindrical 
Wiener proces $W$ on $H= L^2(\Lambda)$. Our nonlinearity is given by a function $F: L^2(\Lambda) \rightarrow L^1(\Lambda)$. We also assume the following conditions:
\begin{itemize}
\item[(a)] For any $ R > 0 $ there exists  $ C_R > 0$ such that for all $ u, v \in B(0,R)= \{ u \in H; \, \| u \| \leq R \}:$
$$
\| F(u) - F(v) \|_{L^1(\Lambda)} \leq C_R \| u - v \|.
$$
\item[(b)] There exists $q \geq 2, \, C_q > 0 $ such that for all $ u, v \in L^2(\Lambda)$ for any $ x \in \Lambda, \, a.e.:$  
\begin{equation*} 
F(u + v)(x) \cdot v(x) \leq C_q ( 1  + |u(x)|^q + |v(x)|^2 ).  \label{bb} \tag{F}
\end{equation*} 
\item[(c)] $F$ maps bounded sets of $L^{\infty}(\Lambda)$ into bounded sets of $L^p(\Lambda)$ for some $p > 2$. 
\end{itemize}
Note that hypotheses (a) and (b) are needed to prove well-posedness of our SPDEs as well as to establish the additional  moment estimates of the solution (see Proposition \ref{prop:L2 integrability}). On the other hand condition (c) is only needed to perform the parameter estimation (see in particular the proof of Lemma \ref{lemma:U4}).

\vskip 1mm The following stronger conditions imply the previous assumptions:
(i) $F: L^2(\Lambda) \to L^2(\Lambda)$ is locally Lipschitz, (ii) there exists $C > 0 $ such that for any $u \in L^2(\Lambda)$ and for all $x \in \Lambda, \, a.e.$, we have: $|F(u)(x)| \leq C \left(1 + |u(x)| \right)$. However these hypotheses prevent the study of nonlinearities which grow quadratically. 

\vskip 1mm
Now we provide examples of nonlinearity $F: L^2(\Lambda) \to L^1(\Lambda)$ that satisfy our conditions (a), (b) and (c). We give more details in Section \ref{subsection:examples} in Appendix. The first examples are nonlocal nonlinearities.
\begin{example} \label{11}
Let $f_1, f_2: \R \rightarrow \R$ be real functions such that $f_1$ is bounded and locally Lipschitz and $f_2$ is (globally) Lipschitz. For any $u \in L^{2}(\Lambda)$, we define
$$
(F(u))(x) = f_1 \left( \| u \| \right) \cdot f_2 \left( u(x) \right), \quad x \in \Lambda, \, a.e.. 
$$
\end{example}
\begin{example} \label{12}
Let $g: \R \rightarrow \R$ be a real function such that $g$ is bounded and locally Lipschitz. Let us fix a function $h \in L^{\infty}(\Lambda)$. For any $u \in L^{2}(\Lambda)$, we define
$$ 
(F(u))(x) = g \left( \| u \| \right) \cdot h(x), \quad x \in \Lambda, \, a.e.
$$
Similarly, one can consider examples like $(F(u))(x) =  h(x) \int_0^1 g(y) f(u(y)) \, dy$, $x \in \Lambda, \, a.e.$, where $g, h \in L^{\infty}(\Lambda)$ and $f: \R \rightarrow \R$ is Lipschitz and bounded.
\end{example}

\begin{example} \label{c44} This class  concerns with Nemytskii type nonlinearities. For any $u \in L^2(\Lambda)$, we define
\begin{equation} \label{gio1}
(F(u))(x) =  f(u(x)), \quad x \in \Lambda, \, a.e.,
\end{equation}
where  $f: \R \rightarrow \R$ is a $C^1$-function such that  the following conditions holds: There exists $C>0$, $q \ge 2$, $C_q >0$ such that
\begin{itemize}
\item[(i)] $|f(x)| \leq C (1 + |x|^2), \, x \in \R$,
\item[(ii)] $|f'(x)| \leq C (1 + |x|), \, x \in \R$, 
\item[(iii)] $f(x+y) \, y \leq C_q (1 + |y|^2 + |x|^q), \;\; \, x, y \in \R$.
\end{itemize}
For instance, we can fix $\eta \in [0,1]$ and consider $f(x) = - x |x|^{\eta} \, g(|x|^{1-\eta})$ where $g: \R \to \R$ is any bounded $C^1$-function which is Lipschitz and nonnegative (cf. Section \ref{subsection:examples} in Appendix).
\end{example}  

\begin{remark} \label{gy} Let us compare our examples with the  semilinear SPDEs considered in \cite{gyongy}. Indeed such paper also considers stochastic Burgers equations driven by space–time white noise  even in the multiplicative case. Our Examples \ref{11} and \ref{12} are not covered by \cite{gyongy}, which does not consider nonlocal non\-lin\-e\-ar\-i\-ties. Also it seems that Example \ref{c44} cannot be treated  in  \cite{gyongy}, since there for a nonlinearity like \eqref{gio1} the  author imposes a linear growth condition on $f$ (cf. hypothesis (2) on page 274 in \cite{gyongy}).
\end{remark}

\begin{remark} \label{gen} More generally, the previous examples can be formulated for nonlinearities involving $f: \bar{\Lambda} \times \R \rightarrow \R$. Such extensions are straightforward. We prefer to present the main examples for $f: \R \to \R$ for the sake of simplicity.

In Example \ref{c44} one could relax the conditions $f \in C^1(\R)$ and (ii) by requiring that $f: \R \rightarrow \R$ is locally Lipschitz with linearly growing Lipschitz constant (cf. hypothesis (3) on page 274 in \cite{gyongy}).
\end{remark}
 
\begin{remark} \label{open} 
Generalized stochastic Burgers equations can also be studied when  the nonlinear term $\partial_x \left( X^2(t) \right)$ is replaced by $\partial_x \left(g(X(t))\right)$, where $g$ grows more than quadratically. However it seems an open problem to study well-posedness and integrability properties of the solutions for  such equations  in the   cylindrical Wiener  case which we consider here (we refer to \cite{gyongyrovira}, \cite{kim} and \cite{kumar} for results about such general $g$ assuming a more regular noise).
\end{remark}

In this paper for the initial condition $X_0$ we mainly assume that it is deterministic and $X_0 \in H^{s} \cap C(\bar \Lambda)$, for any $s \in [0,1/2)$.
  
\vskip 1mm
The previous assumption on $X_0$ is used to a get a ``regular  solution'' of \eqref{eq:Burgers} (cf. Proposition \ref{prop:regulXtilde}). This is needed to obtain precise convergence results in the parameter estimation (see Section \ref{subsection:mainresults}). 

On the other hand, the well-posedness of our SPDE can be established more generally with $X_0 \in H$.

\vskip 1mm 
We can also consider a {\sl random initial condition $X_0$} that is $\mathcal F_0$-measurable with values in $L^4(\Lambda)$ such that, for any $\omega \in \Omega$, $\P$-a.s.,  $X_0(\omega) \in H^{s} \cap C(\bar \Lambda)$, for any $s \in [0,1/2)$. In addition we require that 
\begin{equation} \label{eq:ini}
X_0 \in L^k(\Omega; L^4(\Lambda)), \quad \text{for all $k \geq 1$,}
\end{equation}
in order to prove the integral bounds in Proposition \ref{prop:DPD}.

\vskip 1mm
By the {\sl mild solution to equation \eqref{eq:Burgers}}, we mean that for any $T>0$, there exists an $L^2(\Lambda)$-valued adapted process $X = (X(t), 0 \leq t \leq T)$ with continuous paths such that
\begin{equation} \label{mild solution}
X(t) = \underbrace{S_{\vartheta}(t) X_0 + \frac{1}{2} \int_0^t S_{\vartheta}(t-s) \partial_x \left( X^2(s) \right) \, ds +  \int_0^t S_{\vartheta}(t-s) F(X(s)) \, ds}_{=: \widetilde{X}(t)} + \underbrace{\int_0^t S_{\vartheta} (t-s) \, dW(s)}_{=: \bar{X}(t)},	
\end{equation}
for any $t \in [0, T]$, $\P$-a.s. In what follows, we use the ``splitting'' of the solution into a linear part $(\bar{X}(t), 0 \leq t \leq T)$ and a nonlinear part $(\widetilde{X}(t), 0 \leq t \leq T)$ as indicated. The $L^2(\Lambda)$-valued process $\bar{X} = (\bar{X}(t), 0 \leq t \leq T)$ is the so-called stochastic convolution and it is the unique mild solution to the following linear equation
\begin{equation} \label{eq:linear}
d\bar{X}(t) = \theta \Delta \bar{X}(t) \, dt + dW(t), \quad 0 < t \leq T, \quad \bar{X}(0) = 0. 
\end{equation}
(cf. Chapter 5 in \cite{DPZ}). The regularity we impose on $X_0$ is basically the spatial regularity of $\bar{X}(t)$ (see also Proposition \ref{prop:regulXbar}). 

On the other hand, the nonlinear part $\widetilde{X} = X - \bar{X}$ solves formally the PDE with random coefficients given by
\begin{equation} \label{eq:nonlinear}
\frac{d}{dt} \widetilde{X}(t) = \theta \Delta \widetilde{X}(t) + \frac{1}{2} \partial_x \big( (\bar{X}(t) + \widetilde{X}(t))^2 \big) + F \big(\bar{X}(t) + \widetilde{X}(t) \big), \quad 0 < t \leq T, \quad \widetilde{X}(0) = X_0,
\end{equation} 
\vskip -2mm 
\noindent
with zero Dirichlet boundary conditions $\widetilde{X}(t,0) = \widetilde{X}(t,1) = 0$, for $0 < t \leq T$. In the estimation procedure, we exploit the fact that $\widetilde{X}$ has higher regularity than $\bar{X}$. While the results on the regularity of $\bar{X}$ are basically known, the regularity of the nonlinear part $\widetilde{X}$ seems to be new in the present space-time white noise case even when $F=0$. We summarize them in the following proposition.

\begin{proposition} \label{prop:regulXtilde}
Let $X_0 \in  H^{s} \cap C(\bar \Lambda)$, for any $s \in [0, 1/2)$. Then there exists a pathwise unique mild solution $X$ to the equation \eqref{eq:Burgers} that satisfies $X \in C([0,T]; H^s) \cap C([0,T]; \bar C(\Lambda)) $, $\P$-a.s., for any $s \in [0, 1/2)$. 

The nonlinear part $\widetilde{X} \in  C([0,T]; H^s)\cap C([0,T]; \bar C(\Lambda)) $, $\P$-a.s., for any $s \in [0, 1/2)$. Moreover, for any $s \in [1/2, 3/2)$ with $\eps = \eps(s) \in (0, 1/2)$  such that $s - 1 + 2 \eps < 1/2$, and with the function $g_s (t) = t^{- 1/2 + \eps(s)}$, we have, $\P$-a.s.,
\begin{align} 
(i) \, \, \widetilde{X} \in C((0, T]; H^s), \label{continua} \quad
(ii) \, \, \|\widetilde{X}(t) \|_s \leq C_{\theta, s, \eps, X_0} \, g_s(t), \quad t \in (0,T],
\end{align}
for any $s \in [1/2, 3/2)$ and some (random) constant $C_{\theta, s, \eps, X_0} > 0$.
\end{proposition}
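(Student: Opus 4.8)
The plan is to treat \eqref{eq:nonlinear} as a deterministic evolution equation with random coefficients, driven by a forcing term built from $\bar X$, and to bootstrap regularity using the smoothing properties of the analytic semigroup $(S_\theta(t))$. I would first record the known regularity of the stochastic convolution $\bar X$ (Proposition \ref{prop:regulXbar}): for every $s\in[0,1/2)$, $\bar X\in C([0,T];H^s)\cap C([0,T];C(\bar\Lambda))$, $\P$-a.s., and in particular $\bar X(t)$ lies in $L^p(\Lambda)$ for all finite $p$ with locally bounded norm. Then existence and pathwise uniqueness of the mild solution $X$ in $C([0,T];H^s)\cap C([0,T];C(\bar\Lambda))$ follows by a fixed-point argument on a small time interval combined with the a priori moment/energy estimates of Proposition \ref{prop:L2 integrability} to rule out blow-up and extend to $[0,T]$; here assumption (a) controls the $F$-term and the Burgers term $\tfrac12\partial_x(X^2)$ is handled by the standard estimate $\|S_\theta(t-s)\partial_x v\|_{L^2}\lesssim (t-s)^{-3/4}\|v\|_{L^1}$ together with $\|X^2(s)\|_{L^1}=\|X(s)\|_{L^2}^2$, which is integrable in $s$ since $3/4<1$. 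Since $\widetilde X=X-\bar X$ and both $X,\bar X\in C([0,T];H^s)\cap C([0,T];C(\bar\Lambda))$, the first assertion on $\widetilde X$ for $s\in[0,1/2)$ is immediate.

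The core is the higher regularity (i)–(ii) for $s\in[1/2,3/2)$. Here I would use the mild formulation of \eqref{eq:nonlinear},
\begin{equation*}
\widetilde X(t) = S_\theta(t)X_0 + \tfrac12\int_0^t S_\theta(t-r)\,\partial_x\big((\bar X(r)+\widetilde X(r))^2\big)\,dr + \int_0^t S_\theta(t-r)\,F\big(\bar X(r)+\widetilde X(r)\big)\,dr,
\end{equation*}
and estimate the three terms in $H^s$. For the initial term, since $X_0\in H^\sigma$ for every $\sigma<1/2$, analyticity gives $\|S_\theta(t)X_0\|_s\lesssim t^{-(s-\sigma)/2}\|X_0\|_\sigma$, and choosing $\sigma$ close enough to $1/2$ makes the exponent $(s-\sigma)/2$ strictly less than $1/2$, which is absorbed into $g_s(t)$. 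For the $F$-term, using the already-established bound $X(r)=\bar X(r)+\widetilde X(r)\in C([0,T];C(\bar\Lambda))$ and hence (by assumption (c), or already by (a)–(b) via Proposition \ref{prop:L2 integrability}) that $F(X(r))\in L^2(\Lambda)$ with norm bounded on $[0,T]$, the fractional smoothing $\|S_\theta(t-r)w\|_s\lesssim (t-r)^{-s/2}\|w\|_{L^2}$ is integrable for $s<2$, giving a bound of order $t^{1-s/2}$, again controlled by $g_s(t)$ (indeed better). The Burgers term is the delicate one: write $\partial_x\big((\bar X+\widetilde X)^2\big)$ and use $\|(-\Delta)^{s/2}S_\theta(t-r)\partial_x v\|_{L^2}\lesssim (t-r)^{-(s+1)/2-\epsilon'}\|v\|_{L^{2}}$-type bounds, or more efficiently the $L^p$-smoothing $\|S_\theta(\tau)\partial_x v\|_{W^{s,2}}\lesssim \tau^{-\alpha}\|v\|_{L^{p}}$ with a suitable $p$ coming from Sobolev embedding $W^{2\epsilon,2}\hookrightarrow L^\infty$ in one dimension ($2\epsilon>1/2$ suffices). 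Since $v=(\bar X(r)+\widetilde X(r))^2$ and $\bar X+\widetilde X$ is bounded and continuous in space with locally bounded sup-norm, $\|v\|_{L^p}$ is bounded on $[0,T]$ for every finite $p$; the exponent $\alpha$ then works out, for the stated choice of $\epsilon(s)$ with $s-1+2\epsilon<1/2$, to be of the form $\tfrac12+\epsilon<1$ plus an integrable contribution, so the time integral converges and produces the singular rate $t^{-1/2+\epsilon}$ at $t=0$. Finally, continuity $\widetilde X\in C((0,T];H^s)$ follows from strong continuity of $S_\theta(\cdot)$ on $H^s$ away from $0$, dominated convergence in the two integrals using the integrable majorants just obtained, and the already-known continuity of $r\mapsto\bar X(r)$ and $r\mapsto\widetilde X(r)$ in $C(\bar\Lambda)$.

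The main obstacle is the precise accounting of exponents in the Burgers term: one must simultaneously gain one spatial derivative worth of regularity ($s$ derivatives on $\widetilde X$ from the $\partial_x$ inside the convolution), keep the singularity at $t=0$ integrable, and stay below the $H^{3/2}$ threshold. This forces the use of $L^p$–$W^{s,q}$ smoothing estimates for the heat semigroup (rather than pure $L^2$ theory), together with the one-dimensional Sobolev embedding $H^{1/2+}\hookrightarrow C(\bar\Lambda)$, and a careful choice of the free parameter $\epsilon=\epsilon(s)$; the constraint $s-1+2\epsilon<1/2$ in the statement is exactly what makes the relevant time-integral exponent strictly less than $1$. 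A secondary technical point is that all these estimates are pathwise, with random constants depending on $\sup_{t\in[0,T]}\|\bar X(t)\|_{C(\bar\Lambda)}$ and $\sup_{t\in[0,T]}\|X(t)\|_{C(\bar\Lambda)}$, which are finite $\P$-a.s.\ by the first part and by Proposition \ref{prop:regulXbar}; one should also invoke a fixed-point/continuation argument once more to see that the a priori $H^s$ bound is not merely formal but genuinely propagated by the equation, for instance by approximating $X_0$ by smooth data and passing to the limit using the uniform (in the approximation) bounds.
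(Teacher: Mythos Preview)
Your overall strategy---treating \eqref{eq:nonlinear} as a deterministic equation with random forcing and bootstrapping via the mild formulation---is exactly the one the paper uses, and your handling of the initial-condition term and of the $F$-term is essentially correct. The genuine gap is in the Burgers term for $s\in[1,3/2)$. Starting only from $X^2\in L^p(\Lambda)$ for all finite $p$ (which is all that follows from $X\in C(\bar\Lambda)$), the best smoothing estimate available is
\[
\|(-\Delta)^{s/2}S_\theta(\tau)\partial_x v\|_{L^2}\lesssim\tau^{-(s+1)/2}\|v\|_{L^2};
\]
passing to $L^p$ with $p>2$ gains nothing on a bounded interval (it is just $L^p\subset L^2$), and the embedding $H^{2\eps}\hookrightarrow L^\infty$ you mention points in the wrong direction for this purpose. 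The exponent $(s+1)/2$ exceeds $1$ once $s>1$, so the time integral diverges and your claimed exponent $\alpha=\tfrac12+\eps$ cannot be obtained this way.

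The missing ingredient is the algebra property of $H^\sigma\cap L^\infty$ (Proposition \ref{prop:algebra} in the appendix): since $X\in C([0,T];H^\sigma)\cap C([0,T];C(\bar\Lambda))$ for every $\sigma<1/2$, one gets $X^2\in C([0,T];H^\sigma)$ for every $\sigma<1/2$, not merely $X^2\in L^p$. With this extra half-derivative of input regularity, the operator $Ru(t)=\int_0^t S_\theta(t-r)\partial_x u(r)\,dr$ maps $C([0,T];H^\sigma)$ into $C([0,T];H^{1+\sigma-\eps})$ (this is Lemma \ref{lemma:R}(iii), proved by writing $\partial_x=(-A)^{1/2}[(-A)^{-1/2}\partial_x]$ with the bracket bounded on $H^\sigma$), which reaches $s$ arbitrarily close to $3/2$. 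A related misreading of the statement: in the paper's proof both the Burgers integral and the $F$-integral lie in $C([0,T];H^s)$ with \emph{no} singularity at $t=0$; the rate $g_s(t)=t^{-1/2+\eps}$ and the constraint $s-1+2\eps<1/2$ arise solely from the initial term $S_\theta(t)X_0$, since $X_0\in H^{s-1+2\eps}$ is precisely the amount of regularity available.
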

In the sequel, we will frequently use that the function $g_s \in L^2([0, T])$ for $s \in [1/2, 3/2)$. 
 
\vskip 1mm
The proof is deferred to Section \ref{sec:proofs}.
We also need to consider the notion of weak solution. In that regard, we formulate the following lemma.
\begin{lemma} \label{lemma:weak solution}
The mild solution $X$ verifies, for any $z \in H^2$, $\P$-a.s., for any $t \in [0,T]$,
$$
\left\langle z, X(t) \right\rangle = \left\langle z, X_0 \right\rangle + \theta \int_0^t \left\langle \Delta z, X(s) \right\rangle \, ds - \frac{1}{2} \int_0^t \left\langle \partial_x z, X^2(s) \right\rangle \, ds + \int_0^t \left\langle z, F(X(s)) \right\rangle \, ds + \left\langle z, W(t) \right\rangle.
$$
\end{lemma}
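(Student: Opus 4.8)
The plan is to derive the weak (variational) formulation from the mild formulation \eqref{mild solution} by pairing against a fixed test function $z \in H^2$ and using the smoothing and self-adjointness properties of the analytic semigroup $(S_\theta(t))$. First I would treat the deterministic terms of $\widetilde X(t)$. Since $S_\theta(t)$ is self-adjoint on $H$ and $z \in H^2 = \Dom(\Delta)$, for the first term we write $\langle z, S_\theta(t) X_0\rangle = \langle S_\theta(t) z, X_0\rangle$, and then note that $s \mapsto S_\theta(s) z$ solves the linear heat equation, so $S_\theta(t) z = z + \theta\int_0^t \Delta S_\theta(s) z\, ds = z + \theta \int_0^t S_\theta(s)\Delta z\, ds$; pairing with $X_0$ gives $\langle z, X_0\rangle + \theta\int_0^t \langle S_\theta(s)\Delta z, X_0\rangle\, ds$. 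For the convolution terms I would use the stochastic Fubini theorem (for the $dW$ term) and ordinary Fubini (for the drift terms), together with the identity $\int_0^t S_\theta(t-s) g(s)\, ds$ paired against $z$ equals, after swapping the semigroup onto $z$ and expanding $S_\theta(t-s)z$ as above, a double integral that rearranges into $\int_0^t \langle z, g(s)\rangle\, ds + \theta \int_0^t \langle \Delta z, \int_0^r S_\theta(r-s) g(s)\, ds\rangle\, dr$. Summing the three drift contributions ($g = \frac12 \partial_x(X^2)$, $g = F(X)$) and the stochastic one reconstructs $X(r)$ inside the $\theta\int_0^t \langle \Delta z, \cdot\rangle\, dr$ term via \eqref{mild solution} itself, yielding exactly $\theta\int_0^t \langle \Delta z, X(s)\rangle\, ds$.

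The one term needing extra care is the nonlinear Burgers term. The quantity $\partial_x(X^2(s))$ is only a distribution (since $X(s)\in H^s$ with $s<1/2$, so $X^2(s)\in L^1$ but its spatial derivative is negative-order), so the pairing $\langle S_\theta(t-s) z, \partial_x(X^2(s))\rangle$ must be interpreted by moving the derivative onto the smooth function: $\langle S_\theta(t-s)z, \partial_x(X^2(s))\rangle = -\langle \partial_x S_\theta(t-s) z, X^2(s)\rangle$. This is legitimate because $S_\theta(t-s)z \in H^2 \subset C^1(\bar\Lambda)$ with zero boundary values (so no boundary term appears in the integration by parts), and $X^2(s) \in L^1(\Lambda)$ with $\partial_x S_\theta(t-s)z \in L^\infty$. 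The factor of $-\frac12$ in the statement (versus $+\frac12$ in \eqref{mild solution}) comes precisely from this integration by parts. I would then carry this sign through the Fubini rearrangement; in the final "reconstruction" step the Burgers contribution to $\theta\int_0^t\langle\Delta z,\cdot\rangle$ still combines with the others to give $X(s)$, while the leading-order piece is $-\frac12\int_0^t \langle \partial_x z, X^2(s)\rangle\, ds$.

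The main obstacle is justifying the stochastic Fubini theorem and all the integral interchanges with enough integrability. For the stochastic convolution term I would invoke the stochastic Fubini theorem (e.g. Theorem 4.33 in \cite{DPZ}), which requires that $(s,\omega)\mapsto S_\theta(t-s)z$ — or rather the relevant Hilbert–Schmidt norms — be integrable; since $z\in H^2$ is fixed and deterministic and $\|S_\theta(t-s)z\|$ is bounded, and the needed object after integration by parts involves $(-\Delta)^{1/2}S_\theta(t-s)\,z$ whose norm is integrable in $s$ on $[0,t]$, this goes through. For the deterministic Burgers term the integrand is controlled by $\sup_{r\le T}\|X(r)\|_\infty^2 \cdot \|\partial_x S_\theta(t-s)z\|_\infty$, and using Proposition \ref{prop:regulXtilde} together with the known regularity of $\bar X$ this is a.s.\ integrable. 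Once these interchanges are justified, the identity is a bookkeeping exercise in matching the four pieces of \eqref{mild solution}. A cleaner alternative, which I would mention, is to prove the identity first for $z$ in a dense subclass (e.g. finite linear combinations of eigenfunctions $e_n$, for which everything is explicit via the Fourier expansion $\langle e_n, X(t)\rangle$ satisfying a one-dimensional SDE) and then pass to general $z\in H^2$ by a density/continuity argument, since both sides are continuous in $z$ with respect to the $H^2$-norm, $\P$-a.s., on the common null-set-free event where $X$ has the regularity from Proposition \ref{prop:regulXtilde}.
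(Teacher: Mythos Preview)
Your approach is correct and is the classical ``direct'' Ball-type argument: pair the mild formula against $z$, expand $S_\theta(t-s)z = z + \theta\int_s^t S_\theta(r-s)\Delta z\,dr$, apply (stochastic) Fubini, and observe that the inner convolutions reassemble into $X(r)$. The distributional Burgers term is handled, as you say, by reading $\langle S_\theta(t-s)z,\partial_x(X^2(s))\rangle$ as $-\langle \partial_x S_\theta(t-s)z, X^2(s)\rangle$, which is legitimate since $S_\theta(t-s)z\in H^2\cap H_0^1$ and $X^2(s)\in L^1(\Lambda)$; the only care needed is that $\Delta z$ is merely in $L^2$, so in the Fubini step for the $F$-term you should use $\|S_\theta(u-s)F(X(s))\| \lesssim (u-s)^{-1/4}\|F(X(s))\|_{L^1}$ rather than an $L^1$--$L^\infty$ pairing.

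The paper proceeds differently. It approximates $\bar X(\cdot,\omega)$ by smooth $\varphi_n\in C([0,T];H^1)$, solves the corresponding deterministic equation for $\widetilde X_n$, and notes that at this regularised level $\partial_x\big((\varphi_n+\widetilde X_n)^2\big)\in C([0,T];H)$, so the standard mild-to-weak equivalence applies without any duality interpretation. It then shows $\widetilde X_n\to\widetilde X$ in $C([0,T];H)$ via a priori bounds and a Henry--Gronwall argument, and passes to the limit in the weak identity; the linear part $\bar X$ is treated separately. Your route is shorter and avoids constructing and controlling an auxiliary approximate solution; the paper's route is heavier but never has to justify distributional pairings or Fubini with a low-regularity nonlinearity, and it reuses machinery (the a priori estimates and the approximation of $\bar X$) already present in the well-posedness proof.
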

The proof is not difficult, but rather lengthy. We give a~sketch of proof in Section \ref{subsection:weak solution} in Appendix. 

\vskip 1mm
We finish the section with a result on well-posedness of \eqref{mild solution}. It holds when $X_0 \in H$. We stress that well-posedness of mild solutions for the Burgers equation \eqref{mild solution} with $F = 0$ is treated differently in the papers \cite{DPDT} (see also \cite{DPZ96}) and \cite{gatarekDap}; actually \cite{gatarekDap} also considers a multiplicative noise case. Below we show that the approach in \cite{gatarekDap} can be  adapted to prove well-posedness of \eqref{mild solution} under our assumptions.  
 
\begin{proposition} \label{gatarek}
Let $X_0 \in  H$. Then there exists a pathwise unique mild solution $X$ to the equation \eqref{eq:Burgers}.
\end{proposition}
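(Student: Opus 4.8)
The plan is to establish well-posedness of the mild solution \eqref{mild solution} with $X_0 \in H$ by adapting the fixed-point / a priori estimate strategy of \cite{gatarekDap}. The first step is a local existence and uniqueness argument. I would rewrite \eqref{mild solution} as a fixed point equation $X = \bar X + \Phi(X)$, where $\Phi(X)(t) = S_\theta(t)X_0 + \frac12\int_0^t S_\theta(t-s)\partial_x(X^2(s))\,ds + \int_0^t S_\theta(t-s)F(X(s))\,ds$, and work in a ball of the space $C([0,T_0]; H)$ (or, to control the nonlinearity, in $C([0,T_0]; L^4(\Lambda))$ or a space with a $t^{-\alpha}$ blow-up near $0$ that absorbs the loss of a half-derivative coming from $\partial_x$). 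The smoothing estimate $\|S_\theta(t-s)\partial_x u\|_{L^4} \lesssim (t-s)^{-\beta}\|u\|_{L^2}$ for a suitable $\beta<1$, combined with $\|X^2\|_{L^2} = \|X\|_{L^4}^2$, together with the local Lipschitz hypothesis (a) on $F$ (using $F: L^2\to L^1$ and the embedding/smoothing $S_\theta(t-s): L^1 \to L^4$), makes $\Phi$ a contraction on a small time interval; the stochastic convolution $\bar X$ enters only as a fixed continuous path, since by Proposition \ref{prop:regulXbar} it lies in $C([0,T];\bar C(\Lambda))$ (indeed in $C([0,T];H^s)$ for $s<1/2$). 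This yields a unique local mild solution up to a possibly finite (random) blow-up time $\tau$.

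The second step is the global a priori bound that rules out blow-up, i.e. shows $\tau = T$ a.s. Here I would pass to the shifted equation \eqref{eq:nonlinear} for $\widetilde X = X - \bar X$, which is a genuine PDE with random (but pathwise fixed) coefficients, and test it against $\widetilde X$ in $L^2(\Lambda)$. The Burgers nonlinearity contributes $\frac12\langle \partial_x((\bar X+\widetilde X)^2), \widetilde X\rangle$; the purely quadratic self-interaction $\frac12\langle \partial_x \widetilde X^2, \widetilde X\rangle = \frac12\int_0^1 \partial_x(\widetilde X^2)\widetilde X\,dx$ vanishes after integration by parts using the zero boundary conditions (this is the classical cancellation that makes Burgers tractable), and the cross terms involving $\bar X$ are estimated using that $\bar X$ is a fixed bounded continuous function on $[0,T]\times\bar\Lambda$, together with interpolation and Young's inequality to absorb a small multiple of $\|\partial_x \widetilde X\|^2 = \|\widetilde X\|_1^2$ coming from the diffusion term $-\theta\|\widetilde X\|_1^2$. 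The term from $F$ is handled by hypothesis (b): writing $\langle F(\bar X+\widetilde X),\widetilde X\rangle = \int_0^1 F(\bar X+\widetilde X)(x)\widetilde X(x)\,dx \le C_q\int_0^1(1+|\bar X(x)|^q+|\widetilde X(x)|^2)\,dx = C_q(1 + \|\bar X(t)\|_{L^q}^q + \|\widetilde X(t)\|^2)$, and $\|\bar X(t)\|_{L^q}^q$ is bounded pathwise. Altogether Gr\"onwall's inequality gives a pathwise bound $\sup_{t\le T}\|\widetilde X(t)\| + \int_0^T \|\widetilde X(t)\|_1^2\,dt \le C(\omega, T, X_0, \|\bar X\|_{C([0,T];\bar C)})$, hence $\sup_{t\le T}\|X(t)\|$ is pathwise finite, so the local solution extends to all of $[0,T]$.

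The third step is a standard bootstrap to upgrade from the energy bound to the $C([0,T];H)$ regularity required by the definition of mild solution: once $\widetilde X \in L^\infty([0,T];H)\cap L^2([0,T];H^1)$, the term $X^2 = (\bar X + \widetilde X)^2$ lies in $L^2([0,T];L^2)$ (and better, since $\widetilde X \in L^2([0,T];H^1)\hookrightarrow L^2([0,T];L^\infty)$ in dimension one), so plugging back into the mild formula and using the $H$-continuity of the deterministic convolutions $\int_0^t S_\theta(t-s)\partial_x(\cdot)\,ds$ and $\int_0^t S_\theta(t-s)F(\cdot)\,ds$ yields continuity of $\widetilde X$, hence of $X = \bar X + \widetilde X$, with values in $H$; adaptedness is immediate since $\bar X$ is adapted and $\Phi$ is a progressively-measurable functional. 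Uniqueness is global by the usual argument: if $X^{(1)}, X^{(2)}$ are two mild solutions, their difference satisfies a linear-in-difference equation whose coefficients are controlled by the (already bounded) solutions, and Gr\"onwall forces $X^{(1)} = X^{(2)}$.

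The main obstacle is the energy estimate in the presence of the low-regularity stochastic convolution: unlike the trace-class noise case, here $\bar X$ only barely misses $H^{1/2}$, so the cross terms $\langle \partial_x(\bar X \widetilde X), \widetilde X\rangle$ and $\langle \partial_x \bar X^2, \widetilde X\rangle$ must be bounded using only $\bar X \in C([0,T];\bar C(\Lambda))$ (i.e. $L^\infty$ in space) and not a spatial derivative of $\bar X$; this is done by moving the derivative onto $\widetilde X$ via integration by parts, e.g. $\langle \partial_x(\bar X\widetilde X),\widetilde X\rangle = -\langle \bar X\widetilde X, \partial_x\widetilde X\rangle$, and then $|\langle \bar X\widetilde X,\partial_x\widetilde X\rangle| \le \|\bar X\|_\infty \|\widetilde X\|\,\|\widetilde X\|_1 \le \tfrac{\theta}{4}\|\widetilde X\|_1^2 + C\|\bar X\|_\infty^2\|\widetilde X\|^2$, and likewise for the other term after one integration by parts. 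One also has to make sure the local fixed-point space is chosen so that $\Phi$ maps into a space on which this energy method can be run (e.g. first get $X \in C([0,T_0];L^4)$, which is exactly what controls $\|X^2\|_{L^2}$, then derive the energy bound); this is precisely the point where the argument of \cite{gatarekDap} is invoked and where, as the authors note, one must keep careful track that hypotheses (a) and (b) on $F$ are exactly strong enough to close both the contraction and the a priori estimate.
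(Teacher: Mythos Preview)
Your a priori estimate is essentially the paper's own: test \eqref{eq:nonlinear} against $\widetilde X$, kill the cubic term via the boundary, bound the cross terms using only $\bar X\in C([0,T];\bar C(\Lambda))$, and invoke hypothesis (b) to control $\langle F(\bar X+\widetilde X),\widetilde X\rangle$ by $C_q(1+\|\bar X\|_{L^q}^q+\|\widetilde X\|^2)$, then Gronwall. The paper does exactly this (see \eqref{eq:integrability}--\eqref{sty}).

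Where you diverge is the local step. You propose a direct pathwise contraction for $X=\bar X+\Phi(X)$ on a ball of $C([0,T_0];H)$ (or $L^4$), treating $\bar X$ as a frozen continuous input. Because the Burgers term is quadratic, the contraction time $T_0$ then depends on the radius of the ball, hence on $\omega$; extending to $[0,T]$ requires iterating over random subintervals and checking that the resulting process is adapted. The paper instead follows \cite{gatarekDap} more literally: it introduces truncations $\pi_n:H\to B(0,n)$ and solves the globally Lipschitz problem \eqref{g12} by contraction in the \emph{stochastic} space $Z_R=\{u\text{ adapted}:\ \E\sup_{[0,R]}\|u\|^2<\infty\}$. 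The key point, which the paper stresses, is that the contraction interval $R=R(n,T)$ is \emph{deterministic}, so finitely many deterministic steps give a global adapted solution $X_n$ on $[0,T]$; one then sets $X(t)=X_n(t)$ for $t\le\tau_n=\inf\{t:\|X_n(t)\|\ge n\}$ and uses the energy estimate to show $\tau_n\to\infty$. This buys two things your route has to work harder for: adaptedness comes for free from iterating inside $Z_R$, and the approximants $X_n$ are reused later in the paper (Step~4 of Lemma~\ref{lemma:U3}) to show that $\widetilde X(t)$ is measurable with respect to the smaller $\sigma$-field $\mathcal F'$ generated by the noise. Your pathwise construction would prove the proposition, but would not directly supply those $\mathcal F'$-measurable approximations.

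A smaller point: both approaches need a regularisation to justify testing \eqref{eq:nonlinear} against $\widetilde X$ (the solution is not a priori in $H^1$); the paper does this by first approximating $\bar X$ by smooth functions and noting the final bound is independent of the approximation. Your Step~3 ``bootstrap'' comes after the energy estimate, so you should flag the same approximation device to avoid circularity.
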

\begin{proof}
Let $T > 0$. For any $n \geq 1$, we introduce the projection $\pi_n: H=L^2(\Lambda) \to B(0,n)$, with $B(0,n) = \{ u \in H; \, \| u \| = \| u \|_H \leq n\}$ (here $\pi_n(u) = u$ if $u \in B(0,n)$ and $\pi_n(u)=\frac{nu}{\| u \|}$ if $\| u \| > n$). We consider 
\begin{equation} \label{g12}
X_n(t) = {S_{\theta}(t) X_0 + \frac{1}{2} \int_0^t S_{\theta}(t-s) \partial_x \left( [\pi_n  (X_n(s)) ]^2 \right) \, ds} + \int_0^t S_{\theta}(t-s) F(\pi_n (X_n(s))) \, ds + \bar{X}(t), 
\end{equation} 
for $t \in [0,T]$, $\P$-a.s. Arguing as in \cite{gatarekDap} one can prove that \eqref{g12} has a unique solution on a~small time interval $[0, R] $, $0<R \le T,$ by the contraction principle, using the space $Z_{R}$; $Z_{R}$ is the Banach space of all continuous adapted $H$-valued  processes $u$ such that $\| u \|_{Z_{R}}^2 = \E [\sup_{t \in [0,{R}]} \| u(t)\|^2] < \infty$.
  
We stress that $R = R(n,T)$ is a {\it deterministic} constant (when $F=0$  in \cite{gatarekDap} one gets $R \le C n^{-4}$ where $C$ is deterministic). After a finite numbers of steps one finds that the solution $X_n$ exists and it is unique on $[0,T]$. Clearly, this solution is an adapted stochastic process.
   
We only show how to  handle the additional term $\int_0^t S_{\theta}(t-s) F(\pi_n (X_n(s))) \, ds$ which is not present in \cite{gatarekDap}. Let $0<R \le T$. For $u \in Z_R$, define the nonlinear operator $M_n: Z_R \rightarrow Z_R$ by
$$
(M_n u)(t) := \int_0^t S_{\theta}(t-s) F\left( \pi_n u(s) \right) \, ds, \quad t \in [0, R].
$$
\vskip -2mm 
\noindent
Let $u, v \in Z_R$ be such that $u(0) = v(0) = X_0$. We have, $\P$-a.s.,
\begin{align*} 
&\left\| (M_n u)(t) - (M_n v)(t) \right\| \leq \int_0^t \left\| S_{\theta}(t-s) \left( F(\pi_n (u(s))) - F(\pi_n (v(s))) \right) \right\| \, ds \\
&\lesssim \int_0^t \frac{1}{(t-s)^{1/4}} \left\|  F(\pi_n (u(s)) - F(\pi_n (v(s))) \right\|_{L^1(\Lambda)} \, ds \leq C_n \int_0^t \frac{1}{(t-s)^{1/4}} \left\| \pi_n (u(s)) - \pi_n (v(s)) \right\| \, ds \\
&\leq C_n \int_0^t \frac{1}{(t-s)^{1/4}} \left\| u(s) - v(s) \right\| \, ds \leq \frac{4}{3} C_n \, R^{3/4} \sup_{t \in [0,R]} \| u(t) - v(t) \|,  
\end{align*}  
where we have used the local Lipschitz property of $F$, since $\pi_n (u(s)), \pi_n (v(s)) \in B(0,n)$, and the inequality  
$$ 
\| S_{\theta}(t) \phi\| \leq \frac{C}{t^{1/4}} \| \phi\|_{L^1(\Lambda)}, \quad \phi \in L^1(\Lambda), \quad t \in (0,T],
$$
that holds for some constant $C= C(\theta) > 0$ (the proof is standard; it involves  the explicit representation of the heat kernel). We find  
$$
\| M_n u - M_n v \|_{Z_R} \leq \frac{1}{6} \| u - v \|_{Z_R},
$$ 
for small enough deterministic $R = R(n,T)$ and we can continue as in \cite{gatarekDap}. Indeed we can repeatedly apply the contraction principle on a finite number of closed time intervals $I_{R,k} \subset [0,T]$, $k = 1, \ldots, N = N(n)$, with length less or equal than $R$ (clearly, $\cup_{k=1}^N I_{R,k} = [0,T]$). Thus on each $I_{R,k}$ we have that $X_n$ can be obtained, $\P$-a.s., as limit on $C(I_{R,k}; H)$ of an approximating sequence of adapted stochastic processes.
    
Solutions of \eqref{g12} can be defined for all $t \geq 0.$ In \cite{gatarekDap} they also define the  stopping times:
\begin{equation} \label{tau}
\tau_n = \inf\{t \geq 0; \, \| X_n(t)\| \geq n \}, \quad n \geq 1.
\end{equation}
Since $X_n(t) = X_m(t)$, $m \geq n$, $t \leq \tau_n$ one can set $X(t) = X_n(t)$, $t \leq \tau_n$ and define a solution of \eqref{g12} on $[0, \tau_{\infty})$.
One can obtain a (global) mild solution by proving that  $\tau_{\infty}=\infty$, i.e.,  $\tau_n \rightarrow \infty$, $\P$-a.s by means of a-priori estimates as in  Lemma 3.1 of \cite{DPDT} (see also page 263 in \cite{DPZ96}). Let us explain this method to perform such estimates for \eqref{g12}.

Following \cite{DPZ96} we first consider a regular approximation of $\bar X(t)$. Note that $\bar{X} \in C([0,T]; C(\bar \Lambda))$, $\P$-a.s. (cf. Section 5 in \cite{DPZ}). With such approximation we can work with equation \eqref{eq:nonlinear} which holds for a.e. $t \in (0,T].$ Note that the final obtained estimates do not require the presence of the regular approximation of $\bar X(t)$; they hold with $\bar X(t)$. 
   
Multiplying both sides of equation \eqref{eq:nonlinear} by $\widetilde{X}$, integrating with respect to $x$ in $\Lambda = (0,1)$ and using the integration by parts we get
\begin{align}
&\frac{1}{2} \frac{d}{dt} \int_0^1 \widetilde{X}^2(t, x) \, dx + \theta \int_0^1 (\partial_x \widetilde{X}(t, x))^2 \, dx \notag \\
&= - \frac{1}{2} \int_0^1 \left( \widetilde{X}(t, x) + \bar{X}(t, x) \right)^2 \partial_x \widetilde{X}(t, x) \, dx + \int_0^1 F \left(\bar{X}(t, \cdot) + \widetilde{X}(t, \cdot) \right)(x)\, \widetilde{X}(t, x) \, dx \notag \\
&= - \frac{1}{2} \int_0^1 \widetilde{X}^2(t, x) \partial_x \widetilde{X}(t, x) \, dx - \int_0^1 \bar{X}(t, x) \widetilde{X}(t, x) \partial_x \widetilde{X}(t, x) \, dx \notag \\
&\phantom{=}- \frac{1}{2} \int_0^1 \bar{X}^2(t, x) \partial_x \widetilde{X}(t, x) \, dx + \int_0^1 F \left(\bar{X}(t, \cdot) + \widetilde{X}(t, \cdot) \right)(x) \,  \widetilde{X}(t, x) \, dx. \label{eq:integrability}
\end{align}
\vskip -1mm 
\noindent
The first term on the right-hand side equals to zero due to the boundary conditions
$$
\int_0^1 \widetilde{X}^2(t, x) \partial_x \widetilde{X}(t, x) \, dx = \frac{1}{3} \left[ \widetilde{X}^3(t, x) \right]_{x=0}^{x=1} = 0.
$$
For the last term on the right-hand side of \eqref{eq:integrability}, we use the assumption (c) in \eqref{bb} 
\begin{align} 
\int_0^1 F \left(\bar{X}(t, \cdot) + \widetilde{X}(t, \cdot) \right)(x)\,  \widetilde{X}(t, x) \, dx &\leq C_q \int_0^1 \left(1 + |\bar{X}(t, x)|^q + |\widetilde{X}(t, x)|^2 \right) \, dx \notag \\
&= C_q \left( 1 + \| \bar{X}(t) \|_{L^q(\Lambda)}^q + \| \widetilde{X}(t) \|^2 \right), \label{d66}
\end{align}
so we obtain 
\begin{align} \label{s66}
&\frac{1}{2} \frac{d}{dt} \| \widetilde{X}(t) \|^2 + \theta \| \widetilde{X}(t) \|_1^2 \\ \notag 
&\leq \| \bar{X}(t) \|_{C(\bar \Lambda)} \| \widetilde{X}(t) \|_{1} \| \widetilde{X}(t) \|_{} + \frac{1}{2} \| \bar{X}(t) \|_{C(\bar \Lambda)}^2 \| \widetilde{X}(t) \|_{1}  + C_q \left( 1 + \| \bar{X}(t) \|_{C(\bar \Lambda)}^q +  \| \widetilde{X}(t) \|^2 \right) 
\\ \notag  
&\leq \frac{\theta}{2} \| \widetilde{X}(t) \|_1^2 + \frac{2}{\theta} \| \bar{X}(t) \|_{C(\bar \Lambda)}^2 \| \widetilde{X}(t) \|_{}^2 +    \frac{\theta}{4} \| \widetilde{X}(t) \|_1^2 + \frac{1}{\theta} \| \bar{X}(t) \|_{C(\bar \Lambda)}^4  + C_q + C_q \| \bar{X}(t) \|_{C(\bar \Lambda)}^q + C_q \| \widetilde{X}(t) \|^2.  
\end{align}   
Therefore, we can easily apply the Gronwall lemma  as in the proof of Lemma 3.1 in  \cite{DPDT} and get  
\begin{equation}\label{sty} 
\| \widetilde{X}(t) \|^2 \le  C ( \mu + \|X_0\|^2  +1)e^{T \mu}
\end{equation}
with $t \in [0,T]$, $C = C(\omega, q,T, \theta) > 0$, $\mu = \sup_{t \in [0,T]} \| \bar X(t)\|^{q + 4}_{C (\bar \Lambda)}$. This finishes the proof.
\end{proof}

\subsection{The observation scheme}

As motivated in \cite{ACP}, \cite{AR}, \cite{JR}, we observe the solution process $(X(t,x), \, t \in [0,T], x \in \Lambda)$ only locally in space around some point $x_0\in\Lambda$. That point $x_0$ as well as the terminal time $T > 0$ remain fixed.

More precisely, the observations are given by a spatial convolution of the solution process with a~kernel $K_{\delta,x_0}$, localising at $x_0$ as the resolution $\delta$ tends to zero. This kernel might for instance model the {\it point spread function} in microscopy.

For $z \in L^2(\mathbb R)$ and $\delta \in(0,1]$ introduce the scalings
\begin{align*}
\Lambda_{\delta, x_0} &:= \delta^{-1} \left( \Lambda - x_0 \right) = \{ \delta^{-1} (x - x_0); \, x \in \Lambda \}, \\
z_{\delta, x_0}(x) &:= \delta^{-1/2} z\left( \delta^{-1} (x - x_0) \right), \quad x \in \R,
\end{align*}
and we also set $\Lambda_{0,x_0} := \R$. For $\delta \in (0, 1]$, we denote by $\Delta_{\delta, x_0}$ the Laplace operator on $L^2(\Lambda_{\delta, x_0})$ with Dirichlet boundary conditions.
  
Throughout this paper, $K \in H^2(\mathbb R)$ stands for a fixed function of compact support in $\Lambda_{\delta', x_0}$, for some $0 < \delta' \leq 1$, called kernel (here $H^2(\mathbb R)$ is the usual $L^2$-Sobolev space on $\R$); note that $\delta' $ may depend on $x_0$. The compact support ensures that $K_{\delta, x_0}$ is localising around $x_0$ as $\delta\to 0$; moreover we have $K_{\delta, x_0} \in H^2=  H^2(\Lambda) $, $0< \delta \le \delta'$. The scaling with $\delta^{-1/2}$ simplifies calculations due to $\| K_{\delta, x_0} \| = \| K \|_{L^2(\mathbb R)}$, while the proposed estimator is invariant with respect to kernel scaling.
  
Local measurements of \eqref{eq:Burgers} at the point $x_0$ with resolution level $\delta\in(0,1]$ are described by the real-valued processes $(X_{\delta, x_0}(t), 0 \leq t \leq T)$ and $(X_{\delta, x_0}^{\Delta}(t), 0 \leq t \leq T)$ given by
\begin{align}
X_{\delta, x_0}(t) &= \left\langle X(t), K_{\delta, x_0} \right\rangle, \label{X-delta} \\
X_{\delta, x_0}^{\Delta}(t) &= \left\langle X(t), \Delta K_{\delta, x_0} \label{X-delta-delta} \right\rangle.
\end{align}
\vskip -2mm 
\noindent
These two processes are the data for our estimation procedure. In fact, since $X_{\delta, x_0}^{\Delta}(t) = \Delta X_{\delta, \cdot}(t)|_{x=x_0}$ by convolution, $X_{\delta, x_0}^{\Delta}(t)$ can be computed by observing $X_{\delta, x}(t)$ for $x$ in a neighborhood of $x_0$.

\section{Estimation method and main results} \label{sec:mainresults}

\subsection{The estimator}

We use the augmented maximum likelihood estimator $\hat{\theta}_{\delta}$ of $\theta$ introduced in \cite{AR}. This (nonparametric) estimator was derived for a linear stochastic heat equation with additive space-time white noise, but \cite{ACP} studied it in an abstract nonlinear (and parametric) setting that also covers the stochastic Burgers equation with trace class noise.

\begin{definition}
The {\it augmented maximum likelihood estimator} (augmented \hbox{MLE}) $\hat\vartheta_{\delta}$ of the parameter $\vartheta > 0$ is defined as
\begin{equation} \label{eq:estimator}
\hat{\theta}_{\delta} = \frac{\int_0^T X_{\delta, x_0}^{\Delta}(t) \, dX_{\delta, x_0}(t)}{\int_0^T \left( X_{\delta, x_0}^{\Delta}(t) \right)^2 \, dt}.
\end{equation}
\end{definition}

As discussed in \cite{AR}, this estimator is closely related to, but different from, the actual MLE, which cannot be computed in closed form, even for linear equations and constant $\theta$.

By Lemma  \ref{lemma:weak solution} the dynamics of $X_{\delta, x_0}$ is given by
\begin{equation} \label{eq:scalardynamics}
dX_{\delta, x_0} = \vartheta X_{\delta, x_0}^{\Delta} \, dt + \frac{1}{2} \left\langle \partial_x \left( X^2(t) \right), K_{\delta, x_0} \right\rangle \, dt + \left\langle F(X(t)), K_{\delta, x_0} \right\rangle \, dt + \| K \|_{L^2(\R)} \, d\bar{w}(t),
\end{equation} 
where $\bar{w}(t) := \left\langle W(t), K_{\delta, x_0} \right\rangle / \| K_{\delta, x_0} \|$ is a scalar Brownian motion. (Note that $\| K_{\delta, x_0} \| = \| K \|_{L^2(\R)} > 0$). Using \eqref{eq:scalardynamics} in the numerator on the right-hand side of \eqref{eq:estimator}, we obtain the fundamental error decomposition
\begin{equation} \label{eq:error}
\delta^{-1} (\hat{\vartheta}_{\delta} - \vartheta) = \delta^{-1} (\mathcal I_{\delta})^{-1} \mathcal R_{\delta} + \delta^{-1} (\mathcal I_{\delta})^{-1} \mathcal M_{\delta},
\end{equation}
where
\begin{align*}
\mathcal I_{\delta} &:= \| K \|_{L^2(\R)}^{-2} \int_0^T \left( X_{\delta, x_0}^{\Delta}(t) \right)^2 \, dt, \tag{observed Fisher information} \\
\mathcal R_{\delta} &:= \| K \|_{L^2(\R)}^{-2} \frac{1}{2} \int_0^T X_{\delta, x_0}^{\Delta}(t) \left\langle \partial_x \left( X^2(t) \right), K_{\delta, x_0} \right\rangle \, dt \\
&\phantom{=}+ \| K \|_{L^2(\R)}^{-2} \int_0^T X_{\delta, x_0}^{\Delta}(t) \left\langle F(X(t)), K_{\delta, x_0} \right\rangle \, dt, \tag{nonlinear bias} \\
\mathcal M_{\delta} &:= \| K \|_{L^2(\R)}^{-1} \int_0^T X_{\delta, x_0}^{\Delta}(t) \, d\bar{w}(t). \tag{martingale part}
\end{align*}
The observed Fisher information $\mathcal I_{\delta}$ does not correspond to the Fisher information of the statistical model, however it gives the quadratic variation of the martingale $\mathcal M_{\delta}$ in time.

To prove consistency, it is enough to show that $(\mathcal I_{\delta})^{-1} \mathcal R_{\delta}$ and $ (\mathcal I_{\delta})^{-1} \mathcal M_{\delta}$ vanish, as $\delta \rightarrow 0$, and to prove asymptotic normality, we will show that $\delta^{-1} (\mathcal I_{\delta})^{-1} \mathcal R_{\delta} \rightarrow 0$, while $\delta^{-1} (\mathcal I_{\delta})^{-1} \mathcal M_{\delta}$ converges in distribution to a Gaussian random variable.

To analyze these terms, we use the ``splitting technique'' of the solution (i.e., we write $X = \bar{X} + \widetilde{X}$ as above) and we study separately the linear parts
\begin{align*}
\bar{X}_{\delta, x_0}(t) := \left\langle \bar{X}(t), K_{\delta, x_0} \right\rangle, \;\;\;\;   \;\;\;
\bar{X}_{\delta, x_0}^{\Delta}(t) := \left\langle \bar{X}(t), \Delta K_{\delta, x_0} \right\rangle  
\end{align*}
and the corresponding nonlinear parts $\widetilde{X}_{\delta, x_0}(t)$, $\widetilde{X}_{\delta, x_0}^{\Delta}(t)$. (Note that since the stochastic convolution $\bar{X}$ is centered Gaussian process, the $\bar{X}_{\delta, x_0}$ and $\bar{X}_{\delta, x_0}^{\Delta}$ are also centered Gaussian processes.) We also define the observed Fisher information $\bar{\mathcal I}_{\delta}$ that corresponds to the linear part
$$
\bar{\mathcal I}_{\delta} := \| K \|_{L^2(\R)}^{-2} \int_0^T \left( \bar{X}_{\delta, x_0}^{\Delta}(t) \right)^2 \, dt.
$$
For the analysis, we initially follow \cite{ACP}, where the classical stochastic Burgers equation with spatially ``smooth\-ed'' noise $(- \Delta)^{-\gamma} \, dW(t)$, $\gamma > 1/4$ is also discussed. (It is a~borderline case of the presented more general nonlinear SPDE and as that, it is analyzed separately.) A lot of general and auxiliary results can be used also in our case $\gamma = 0$, so first, let us revisit the assumptions of the model from \cite{ACP}: 
 
\vskip 2mm 
{\it Assumption $B$:} Since we have $\gamma = 0$, $B = I$, $B^* = I$, $B_{\delta, x_0}^* = I$, this assumption trivially fulfilled.

{\it Assumption $K$:} Since we have $\gamma = 0$, $\lceil {\gamma} \rceil = 0$, this assumption is fulfilled by taking $\widetilde{K} := K$.

{\it Assumption $ND$:} Since we have $\gamma = 0$, $\lceil {\gamma} \rceil = 0$, $B_{\delta, x_0}^* = I$, it is required that $\| K \|_{L^2(\R)} > 0$ and $\frac{1}{2} \| K' \|_{L^2(\R)}^2 > 0$. We assume $K$ with a compact support, so this non-degeneracy condition is fulfilled.
 
{\it Assumption $F$:} Since our nonlinearity $\frac{1}{2} \partial_x \left( X^2(\cdot) \right) + F(X(\cdot))$ is more concrete than rather general nonlinearity $F$ considered in \cite{ACP}, we do not need $(16)$ from \cite{ACP} (that is used to claim (iv) in Proposition 3 therein). We only need the part $(15)$. 

By Lemma \eqref{lemma:upperbounds}(i) in Appendix we show that the inequality $\left| \widetilde{X}_{\delta}^{\Delta}(t) \right| \lesssim g_{3/2 - \eps}(t) 
\delta^{-1/2 - \eps}$ holds for any $\eps \in [0,1/2)$, $\P$-a.s., $t \in (0,T]$ (recall that $g_{3/2 - \eps} \in L^2([0, T])$, see also \eqref{continua}). That provides 
$$ 
\int_0^T \left( \widetilde{X}_{\delta, x_0}^{\Delta}(t) \right)^2 \, dt = o_{\P}(\delta^{-2}),
$$
which is enough to obtain the point (iii) in Proposition \ref{prop:asymptoticsbarI}. (Cf. Proposition 3(iii) in \cite{ACP}.) 

\vskip 1mm
With this, the only assumption from \cite{ACP} that we pose on our model is on the kernel $K$ and comes from Theorem 13 in \cite{ACP}. Recall that $x_0 \in \Lambda = (0,1)$ is fixed.

\begin{assumption}{(L)}\label{ass:L}
There exists a function $L \in H^3(\R)$ with compact support in $\Lambda_{\delta', x_0}$, for some $0 < \delta' \leq 1$, such that $K = \partial_x L  = L'$. 
\end{assumption} 
   
\begin{example} We provide an easy example of kernel $K$ which verifies Assumption \ref{ass:L}. Consider \hbox{a~smooth} compactly supported bump function
$$
f(x) := \exp \left( - \frac{10}{1 - x^2} \right) \boldsymbol{1}_{[-1,1]}(x), \quad x \in \R. 
$$
Since $[-1, 1] \subset \Lambda_{\delta', x_0} = (- \delta'^{-1} x_0, \delta'^{-1} (1 - x_0))$ for some $0 < \delta' \leq 1$ (depending on $x_0$)  and $f \in H^3(\R)$, we can take $L := f$ and the kernel $K$ for the statistical procedure as $K := f'$. Assumption \ref{ass:L} is satisfied. Note though, that different choices and setups are also possible.
\end{example}
 
In order to prove the asymptotic normality of our estimator, we follow \cite{ACP} in the analysis of the martingale term $\delta^{-1} (\mathcal I_{\delta})^{-1} \mathcal M_{\delta}$ as $\delta \rightarrow 0$, but we deviate for the analysis of the bias term $\delta^{-1} (\mathcal I_{\delta})^{-1} \mathcal R_{\delta}$ mainly for two reasons. First, we need to adjust Lemma S.3 of the supplement of \cite{ACP}  to our case $\gamma = 0$, $p = 2$ for $\widetilde{X}$ and the regularity of $\bar{X}$ and $\widetilde{X}$ (for the comparison, see Lemma \ref{lemma:upperbounds} in Appendix). Second, we want to clarify some points in the paper (see the remark below).

\begin{remark}\label{altri}
From the calculations in Lemma S.4 of the supplement of \cite{ACP}, it seems there should be a factor $\lambda_k^{- \gamma - 1/2}$ (instead of just $\lambda_k^{- \gamma}$) in (S.4) and this is then too big for the sum in (S.3) to converge. (Such result comes from Lemma S.2(ii) and it is not quite clear if Lemma S.1 is able to tame it even in the case $\gamma > 1/4$.) Moreover, in the proof of Lemma S.8, it is not completely  clear why the random variable $\widetilde{X}(t,0)$ (that is basically $\widetilde{X}(t, x_0)$) is $\mathcal G$-measurable. The isonormal Gaussian process $\widetilde{W}(z)$ is defined such that it does not correspond with the norm $\| \cdot \|_{\mathcal H}$ of the presented Hilbert space $\mathcal H$. (Basically, some sort of integration with respect to $x$ is missing.) However, if the definition of $\widetilde{W}(z)$ is changed, the $\sigma$-field $\mathcal G$ is changed and then one should verify that the random variable $\widetilde{X}(t,0)$ is $\mathcal G$-measurable. 
\end{remark}

We therefore develop new arguments that can overcome these difficulties. Our presented proofs of Lemmas \ref{lemma:U2} and \ref{lemma:U3} rely on a different technique and cover not only our case $\gamma = 0$. We believe that with some extra work they could be also adjusted to cover the case of any $\gamma \geq 0$. 

Finally, when all convergences in \eqref{eq:error} are assembled, we can formulate our main result in the following theorem. The proof is deferred to Section \ref{sec:proofs}.
\begin{theorem} \label{thm:main}
Grant Assumption \ref{ass:L}. Then the augmented maximum likelihood estimator $\hat{\vartheta}_{\delta}$ is strongly consistent and asymptotically normal estimator of the parameter $\vartheta$ satisfying
\begin{equation} \label{eq:mainresult}
\delta^{-1} \left( \hat{\theta}_{\delta} - \theta \right) \stackrel{d}{\rightarrow} N \left( 0, \frac{2 \theta \| K \|_{L^2(\R)}^2}{T \| K' \|_{L^2(\R)}^2} \right), \quad \text{as $\delta \rightarrow 0$.}
\end{equation}  
\end{theorem}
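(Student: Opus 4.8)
The plan is to establish the two convergences announced right before the theorem via the fundamental error decomposition \eqref{eq:error}, namely
$$
\delta^{-1} \left( \hat{\theta}_{\delta} - \theta \right) = \delta^{-1} (\mathcal I_{\delta})^{-1} \mathcal R_{\delta} + \delta^{-1} (\mathcal I_{\delta})^{-1} \mathcal M_{\delta},
$$
and to show that the first (bias) summand converges to $0$ in probability while the second (martingale) summand converges in distribution to the stated centered normal. The skeleton of the argument follows \cite{ACP}, with the adjustments indicated in the excerpt for the case $\gamma = 0$.

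\emph{Step 1: asymptotics of the observed Fisher information.} First I would invoke Proposition \ref{prop:asymptoticsbarI} (and its linear-part analogue $\bar{\mathcal I}_\delta$) together with the splitting $X = \bar X + \widetilde X$. The point is that $\delta \,\mathcal I_\delta$ is asymptotically equivalent to $\delta\, \bar{\mathcal I}_\delta$, because the nonlinear contribution is negligible: using Lemma \ref{lemma:upperbounds}(i), $|\widetilde{X}_{\delta,x_0}^{\Delta}(t)| \lesssim g_{3/2-\eps}(t)\,\delta^{-1/2-\eps}$ with $g_{3/2-\eps} \in L^2([0,T])$, so $\int_0^T (\widetilde{X}_{\delta,x_0}^{\Delta})^2\,dt = o_{\mathbb P}(\delta^{-2})$; the cross term is controlled by Cauchy--Schwarz. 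For the Gaussian part one uses the scaling lemmas of \cite{ACP}: as $\delta \to 0$, $\delta\,\bar{\mathcal I}_\delta \stackrel{\mathbb P}{\to} \frac{T}{2}\|K'\|_{L^2(\R)}^2 / \|K\|_{L^2(\R)}^2$ (up to the factor $\theta$ bookkeeping carried through the scaling of $\Delta_{\delta,x_0}$). Thus $\delta\,\mathcal I_\delta$ converges in probability to a positive deterministic limit, which is what one needs to divide by later.

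\emph{Step 2: the martingale term and the CLT.} The process $\mathcal M_\delta$ is a continuous martingale with quadratic variation $\langle \mathcal M_\delta\rangle_T = \mathcal I_\delta$. By the martingale central limit theorem (with a random time change, as in \cite{ACP}), $\delta\,\mathcal M_\delta / \sqrt{\delta\,\mathcal I_\delta}$ is asymptotically standard normal — here one checks the conditional Lindeberg/Lyapunov condition, which holds because the integrand $X_{\delta,x_0}^{\Delta}$ has finite moments of all orders by Proposition \ref{prop:regulXtilde} and the Gaussianity of $\bar X$. Combining with Step 1, $\delta^{-1}(\mathcal I_\delta)^{-1}\mathcal M_\delta = (\delta\,\mathcal I_\delta)^{-1}\cdot \delta\,\mathcal M_\delta \stackrel{d}{\to} N\!\left(0,\ \dfrac{2\theta\,\|K\|_{L^2(\R)}^2}{T\,\|K'\|_{L^2(\R)}^2}\right)$ by Slutsky, where the variance is (deterministic limit of $\delta\mathcal I_\delta$)$^{-1}$, using $\langle \mathcal M_\delta\rangle = \mathcal I_\delta$ so that the limiting variance is exactly the reciprocal of the Fisher information limit.

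\emph{Step 3: negligibility of the bias term.} This is where the real work lies and where I expect the main obstacle. One must show $\delta^{-1}(\mathcal I_\delta)^{-1}\mathcal R_\delta \stackrel{\mathbb P}{\to} 0$; since $(\delta\mathcal I_\delta)^{-1}$ is $O_{\mathbb P}(1)$ with a positive limit, it suffices to prove $\mathcal R_\delta = o_{\mathbb P}(1)$. Expanding $X = \bar X + \widetilde X$ in the Burgers term $\frac12\partial_x(X^2) = \frac12\partial_x(\bar X^2) + \partial_x(\bar X \widetilde X) + \frac12 \partial_x(\widetilde X^2)$ and in $F(X)$, one gets several contributions. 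The terms involving $\widetilde X$ (which has extra spatial regularity, $H^s$ for $s<3/2$ with the blow-up rate $g_s$) are handled by Lemma \ref{lemma:upperbounds} together with condition (c) on $F$ — this is the role of Lemma \ref{lemma:U4}. The genuinely hard contribution is the purely Gaussian/quadratic one, $\frac12\int_0^T X_{\delta,x_0}^{\Delta}(t)\langle \partial_x(\bar X^2(t)), K_{\delta,x_0}\rangle\,dt$, which after integration by parts against $K = L'$ (Assumption \ref{ass:L}) and scaling becomes an expression one must show is $o(\delta)$ — this is precisely the content of Lemmas \ref{lemma:U2} and \ref{lemma:U3}, whose proofs require the delicate expansion of the non-Gaussian process $\widetilde X(t,x_0) = \sum_i b_i(t)\nu_i$ in a separable $L^2(\Omega,\mathcal F',\P)$ alluded to in the introduction, replacing the flawed Karhunen--Loève-type argument of \cite{ACP}. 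Granting Lemmas \ref{lemma:U2}, \ref{lemma:U3}, \ref{lemma:U4}, assembling $\mathcal R_\delta = o_{\mathbb P}(1)$ — indeed $\mathcal R_\delta = o_{\mathbb P}(\delta)$ — is then routine, and \eqref{eq:error} combined with Steps 1--2 yields strong consistency (the same estimates with the weaker rate give $(\mathcal I_\delta)^{-1}\mathcal R_\delta, (\mathcal I_\delta)^{-1}\mathcal M_\delta \to 0$) and the asymptotic normality \eqref{eq:mainresult}.
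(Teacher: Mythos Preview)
Your overall architecture matches the paper's: use the error decomposition \eqref{eq:error}, show the bias term vanishes via Lemmas \ref{lemma:U1}--\ref{lemma:U4}, and apply a continuous martingale CLT together with Proposition \ref{prop:asymptoticsbarI} and Slutsky for the martingale term. However, your scaling bookkeeping is off throughout and this makes the argument, as written, fail.

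The observed Fisher information is of order $\delta^{-2}$, not $\delta^{-1}$: Proposition \ref{prop:asymptoticsbarI}(i) gives $\delta^{2}\E\bar{\mathcal I}_\delta \to T\|K'\|_{L^2(\R)}^2/(2\theta\|K\|_{L^2(\R)}^2)$, and (ii)--(iii) then yield $\delta^{2}\mathcal I_\delta \stackrel{\P}{\to}$ this same positive constant. Your repeated claim that ``$\delta\,\mathcal I_\delta$ converges to a positive limit'' is therefore incorrect, and it propagates: in Step 2 the quantity $\delta\mathcal M_\delta/\sqrt{\delta\mathcal I_\delta}$ is \emph{not} asymptotically standard normal (it would be $\mathcal M_\delta/\sqrt{\mathcal I_\delta}$, since $\langle\mathcal M_\delta\rangle_T=\mathcal I_\delta$), and the factorization $(\delta\mathcal I_\delta)^{-1}\cdot\delta\mathcal M_\delta$ equals $\mathcal M_\delta/\mathcal I_\delta$, which is off by a factor $\delta^{-1}$ from what you need. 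The paper's correct decomposition is $\delta^{-1}(\mathcal I_\delta)^{-1}\mathcal M_\delta = \bigl(\mathcal M_\delta/\sqrt{\mathcal I_\delta}\bigr)\cdot(\delta^{2}\mathcal I_\delta)^{-1/2}$. In Step 3 the consequence is that ``$\mathcal R_\delta = o_\P(1)$ suffices'' is wrong: with $\mathcal I_\delta\sim c\,\delta^{-2}$ one needs $\delta\,\mathcal R_\delta \stackrel{\P}{\to} 0$, i.e.\ $\mathcal R_\delta = o_\P(\delta^{-1})$, and this is precisely what Lemmas \ref{lemma:U1}--\ref{lemma:U4} establish (each shows $\delta\,U_{i,\delta}\stackrel{\P}{\to}0$). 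Your aside ``indeed $\mathcal R_\delta=o_\P(\delta)$'' is stronger than what is proved and stronger than what is needed.

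Two smaller points: you omit Lemma \ref{lemma:U1} (the purely Gaussian Wick-formula term $U_{1,\delta}$, which is distinct from $U_{2,\delta}$ and $U_{3,\delta}$), and for a \emph{continuous} martingale no Lindeberg-type condition is required---convergence in probability of the quadratic variation $\mathcal I_\delta/\E\bar{\mathcal I}_\delta\stackrel{\P}{\to}1$ is enough for the CLT invoked here.
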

\vskip -1mm 
\noindent
The asymptotic normality of the estimator allows us to prescribe asymptotic confidence intervals for the parameter $\theta$.
\begin{corollary}
Let $\alpha \in (0,1)$. Based on the estimator $\hat{\theta}_{\delta}$, the confidence interval
$$
I_{1 - \alpha} = \left[ \hat{\theta}_{\delta} - \mathcal I_{\delta}^{-1/2} q_{1 - \alpha/2}, \hat{\theta}_{\delta} + \mathcal I_{\delta}^{-1/2} q_{1 - \alpha/2} \right],
$$
with the standard Gaussian $(1 - \alpha/2)$-quantile $q_{1 - \alpha/2}$ has asymptotic coverage $1 - \alpha$ as $\delta \rightarrow 0$ under the assumptions of Theorem \ref{thm:main}.
\end{corollary}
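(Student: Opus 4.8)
The plan is to derive the confidence interval from the asymptotic statement of Theorem \ref{thm:main} together with the convergence of the observed Fisher information $\mathcal I_\delta$ at the right rate. First I would recall that, by Theorem \ref{thm:main}, $\delta^{-1}(\hat\theta_\delta - \theta) \stackrel{d}{\to} N(0, \Sigma)$ with $\Sigma = 2\theta\|K\|_{L^2(\R)}^2 /(T\|K'\|_{L^2(\R)}^2)$. The width of $I_{1-\alpha}$ is controlled by $\mathcal I_\delta^{-1/2}$, so the key deterministic ingredient is that $\delta\,\mathcal I_\delta^{1/2}$ converges in probability to a constant; more precisely, from Proposition \ref{prop:asymptoticsbarI} (the analogue of Proposition 3 in \cite{ACP}) one has $\delta^2\,\mathcal I_\delta \stackrel{\P}{\to} \tfrac{T}{2\theta}\,\|K'\|_{L^2(\R)}^2 \|K\|_{L^2(\R)}^{-2}$, i.e. $\delta^2\,\mathcal I_\delta \stackrel{\P}{\to}\Sigma^{-1}$. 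This is exactly the statement that $\mathcal I_\delta$ is, asymptotically, the reciprocal of the variance of the rescaled error, which is the usual self-normalization phenomenon behind Wald-type intervals.

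Next I would combine the two convergences. Writing
$$
\frac{\hat\theta_\delta - \theta}{\mathcal I_\delta^{-1/2}} \;=\; \big(\delta^{-1}(\hat\theta_\delta - \theta)\big)\cdot\big(\delta^2 \mathcal I_\delta\big)^{1/2},
$$
the first factor converges in distribution to $N(0,\Sigma)$ and the second factor converges in probability to $\Sigma^{-1/2}$, which is a positive constant. By Slutsky's theorem the product converges in distribution to $\Sigma^{-1/2}\cdot N(0,\Sigma) = N(0,1)$. Hence
$$
\P\big(\theta \in I_{1-\alpha}\big) = \P\Big(\big|\mathcal I_\delta^{1/2}(\hat\theta_\delta-\theta)\big| \le q_{1-\alpha/2}\Big) \longrightarrow \P\big(|Z|\le q_{1-\alpha/2}\big) = 1-\alpha,
$$
where $Z\sim N(0,1)$ and the last equality is the definition of the standard Gaussian $(1-\alpha/2)$-quantile. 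Since $t\mapsto\P(|Z|\le t)$ is continuous, the convergence $\mathcal I_\delta^{1/2}(\hat\theta_\delta-\theta)\stackrel{d}{\to}Z$ yields convergence of the coverage probability to $1-\alpha$, which is precisely the asserted asymptotic coverage.

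I do not expect any serious obstacle here: the corollary is a routine consequence of Theorem \ref{thm:main} and the in-probability limit for $\mathcal I_\delta$ already established en route to that theorem. The only mild point to be careful about is that one uses the in-probability (not merely distributional) convergence of $\delta^2\mathcal I_\delta$ so that Slutsky applies cleanly, and that $\Sigma^{-1}>0$ so dividing is legitimate and the limit is a genuine standard normal rather than a degenerate or mixed law. One could alternatively phrase the argument directly through the identity $\mathcal I_\delta^{-1/2}q_{1-\alpha/2} = \delta\,q_{1-\alpha/2}(\delta^2\mathcal I_\delta)^{-1/2}$, exhibiting $I_{1-\alpha}$ as a data-driven rescaling of the oracle interval $[\hat\theta_\delta - \delta\Sigma^{1/2}q_{1-\alpha/2},\,\hat\theta_\delta+\delta\Sigma^{1/2}q_{1-\alpha/2}]$ whose coverage tends to $1-\alpha$ by Theorem \ref{thm:main}; the data-driven version then has the same asymptotic coverage because the rescaling factor is consistent for $1$.
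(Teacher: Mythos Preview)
Your argument is correct and is precisely the standard derivation; the paper states this corollary without proof, treating it as an immediate consequence of Theorem~\ref{thm:main} and the convergence $\delta^2\mathcal I_\delta\stackrel{\P}{\to}\Sigma^{-1}$ obtained from Proposition~\ref{prop:asymptoticsbarI}, which is exactly what you spell out.
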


The next result is adapted from  Theorem 6 in \cite{ACP}; see also the proof of Proposition 5.12 in \cite{AR}. It shows that the convergence rate $\delta$ is minimax optimal. We provide the proof in Section \ref{subsection:rate optimality} in Appendix.

Let $\Theta$ be the set of all admissible inputs $\kappa = (\theta, a, F,  X_0)$ in \eqref{eq:Burgers}, where we consider $\theta > 0$, $a \, \partial_x (X^2(\cdot))$, with $a \in \R$, instead of $\frac{1}{2} \partial_x (X^2(\cdot))$, the nonlinerarity $F$ that fulfills hypothesis \eqref{bb} and the initial random condition $X_0$ which verifies the assumptions corresponding to  \eqref{eq:ini}.

Denote by $\P_{\kappa}$ the law of $X_{\delta, x_0}$ on the canonical space $C([0,T])$, equipped with the Borel sigma-field corresponding to the supremum norm on $[0,T]$, and by $\E_{\kappa}$ its expectation.
\vskip -2mm 
\noindent
\begin{proposition} \label{prop:lowerbound}
Grant Assumption \ref{ass:L}. As $\delta \rightarrow 0$, we have the following asymptotic lower bound of the root mean squared error
\begin{equation*}
\inf_{\hat{\theta}} \sup_{\kappa \in \Theta} \left( \E_{\kappa} \left( \hat{\theta} - \theta \right)^2 \right)^{1/2} \geq C T^{-1/2} \delta,
\end{equation*}
for some constant $C > 0$, where the infimum is taken over all  estimators $\hat{\theta}$ based on observing $X_{\delta, x_0}$.
\end{proposition}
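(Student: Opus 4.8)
\textbf{Proof strategy for Proposition \ref{prop:lowerbound}.}

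The plan is to reduce the problem to a local asymptotic normality (LAN)-type lower bound obtained by comparing the law $\P_\kappa$ of the observed scalar process $X_{\delta,x_0}$ under the true parameter $\theta$ with its law under a nearby parameter $\theta_\delta = \theta + c\delta$ for a suitable constant $c$, while keeping the nonlinearity $F$, the coefficient $a$, and the initial condition $X_0$ fixed. The key structural observation, exactly as in Theorem 6 of \cite{ACP} and Proposition 5.12 of \cite{AR}, is that under the Dirichlet heat semigroup the rescaled stochastic convolution localised at $x_0$ behaves, to leading order in $\delta$, like the stationary solution of a heat equation on all of $\R$ with diffusivity $\theta$ observed through the kernel $K$; the localisation kills all boundary effects and, crucially, the leading-order contribution of $X_{\delta,x_0}$ is governed entirely by the \emph{linear} part $\bar X_{\delta,x_0}$, since by Proposition \ref{prop:regulXtilde} and Lemma \ref{lemma:upperbounds}(i) the nonlinear part $\widetilde X_{\delta,x_0}^\Delta$ contributes only at order $o_\P(\delta^{-1})$ and hence cannot improve the rate.

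First I would fix a reference input $\kappa_0 = (\theta, a, F, X_0) \in \Theta$ (for instance $a = 1/2$, $F = 0$, $X_0 = 0$, all of which are admissible) and consider the perturbed inputs $\kappa_\delta = (\theta + c\delta, a, F, X_0)$. Next I would invoke the standard two-point (Le Cam) lower bound: for any estimator $\hat\theta$,
\begin{equation*}
\sup_{\kappa \in \{\kappa_0,\kappa_\delta\}} \E_\kappa\big(\hat\theta - \theta(\kappa)\big)^2 \;\geq\; \frac{(c\delta)^2}{4}\,\Big(1 - \mathrm{TV}(\P_{\kappa_0},\P_{\kappa_\delta})\Big)^2,
\end{equation*}
so it suffices to show that the total variation distance $\mathrm{TV}(\P_{\kappa_0},\P_{\kappa_\delta})$ stays bounded away from $1$ as $\delta \to 0$, for a well-chosen $c$. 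To control this, I would bound the Kullback--Leibler divergence via Girsanov's theorem on the canonical path space $C([0,T])$: from the scalar dynamics \eqref{eq:scalardynamics}, the laws of $X_{\delta,x_0}$ under the two parameters differ only in the drift coefficient $\theta X^\Delta_{\delta,x_0}$ versus $(\theta+c\delta)X^\Delta_{\delta,x_0}$, so
\begin{equation*}
\mathrm{KL}(\P_{\kappa_\delta}\|\P_{\kappa_0}) \;=\; \frac{(c\delta)^2}{2\|K\|_{L^2(\R)}^2}\,\E_{\kappa_\delta}\!\int_0^T \big(X^\Delta_{\delta,x_0}(t)\big)^2\,dt \;=\; \frac{(c\delta)^2}{2}\,\E_{\kappa_\delta}\big[\mathcal I_\delta\big].
\end{equation*}
By Proposition \ref{prop:asymptoticsbarI} (the scaling result that $\delta^2 \E[\bar{\mathcal I}_\delta]$, and hence $\delta^2\E[\mathcal I_\delta]$, converges to $\frac{T\|K'\|_{L^2(\R)}^2}{2\theta}$) one has $\E_{\kappa_\delta}[\mathcal I_\delta] \asymp \delta^{-2}$, so $\mathrm{KL}(\P_{\kappa_\delta}\|\P_{\kappa_0})$ converges to $\frac{c^2 T \|K'\|_{L^2(\R)}^2}{4\theta}$, a finite constant. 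Choosing $c$ small enough that this constant is below $1/2$ and applying Pinsker's inequality gives $\mathrm{TV} \le 1/2$ for all small $\delta$, which combined with the Le Cam bound yields the claimed lower bound with $C = C(c,\theta,K) > 0$, noting that the $T^{-1/2}$ dependence comes out of the constant $\frac{c^2 T\|K'\|^2}{4\theta}$ when one optimises the choice of $c$ proportionally to $T^{-1/2}$.

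The main obstacle is the control of the perturbed Fisher information $\E_{\kappa_\delta}[\mathcal I_\delta]$ uniformly in $\delta$: one must ensure that replacing $\theta$ by $\theta + c\delta$ does not destroy the scaling asymptotics of Proposition \ref{prop:asymptoticsbarI}, and that the nonlinear contributions to $\mathcal I_\delta$ (through $\widetilde X^\Delta_{\delta,x_0}$) remain genuinely lower order under $\P_{\kappa_\delta}$, not just under $\P_\kappa$ — but since $c\delta \to 0$ the perturbed semigroup $S_{\theta+c\delta}$ converges to $S_\theta$ and all the estimates of Section \ref{sec:proofs} and the Appendix are uniform over compact parameter ranges, this is a routine (if slightly tedious) continuity argument. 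A secondary point requiring care is that the canonical-space formulation forces us to work with the \emph{law} of $X_{\delta,x_0}$ rather than the process itself, so the Girsanov density must be expressed purely in terms of the observed path; this is legitimate precisely because the drift term $X^\Delta_{\delta,x_0}(t) = \Delta X_{\delta,\cdot}(t)|_{x=x_0}$ is, by the remark following \eqref{X-delta-delta}, a measurable functional of the observed trajectory.
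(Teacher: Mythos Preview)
Your two-point Le Cam strategy is the right starting point, and it is also what the paper does. However, the Girsanov step as you formulate it does not go through. You write
\[
\mathrm{KL}(\P_{\kappa_\delta}\|\P_{\kappa_0}) \;=\; \frac{(c\delta)^2}{2\|K\|_{L^2(\R)}^2}\,\E_{\kappa_\delta}\!\int_0^T \big(X^\Delta_{\delta,x_0}(t)\big)^2\,dt,
\]
but this would be correct only if the one-dimensional observation $X_{\delta,x_0}$ were an It\^o diffusion in its \emph{own} filtration with drift $\theta X^\Delta_{\delta,x_0}(t) + (\text{nonlinear terms})$. It is not: the drift in \eqref{eq:scalardynamics} depends on the full infinite-dimensional state $X(t)$, which is not measurable with respect to $\sigma(X_{\delta,x_0}(s),\,s\le t)$. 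Your justification at the end --- that $X^\Delta_{\delta,x_0}(t)=\Delta X_{\delta,\cdot}(t)|_{x=x_0}$ is a functional of the observed trajectory --- misreads the remark after \eqref{X-delta-delta}: that identity requires observing $X_{\delta,x}$ for $x$ in a \emph{neighbourhood} of $x_0$, not just the single scalar process at $x_0$. And even granting this, the Burgers term $\langle\partial_x(X^2),K_{\delta,x_0}\rangle$ would still not be path-measurable. The correct KL on the canonical space $C([0,T])$ involves the innovation representation (the conditional expectation of the drift given the observed path), which you cannot compute here. A related gap: Proposition \ref{prop:asymptoticsbarI} gives $\delta^2\E[\bar{\mathcal I}_\delta]\to\text{const}$ and $\mathcal I_\delta=\bar{\mathcal I}_\delta+o_\P(\delta^{-2})$, but the latter is only in probability, so your ``hence $\delta^2\E[\mathcal I_\delta]$ converges'' is not justified without a uniform-integrability argument.

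The paper sidesteps both issues by choosing the two alternatives inside the \emph{linear} submodel ($a=0$, $F=0$) with \emph{stationary} Gaussian initial conditions $X_0^\theta=\int_{-\infty}^0 S_\theta(-s)\,dW(s)$, so that $X_{\delta,x_0}$ becomes a stationary Gaussian process under each alternative. The distance between the two Gaussian laws on $C([0,T])$ can then be controlled directly via covariance (spectral) calculations, exactly as in Proposition~5.12 and Lemma~A.1 of \cite{AR}, without any appeal to a one-dimensional Girsanov formula. The stationarity and Gaussianity are what make the computation tractable; with your choice $a=1/2$, $X_0=0$ neither holds.
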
 

Our overall results are satisfactory, because they match not only to the results in \cite{ACP}, but also to \cite{AR}, \cite{JR} and other works that studied the asymptotic behaviour of the augmented MLE in the framework of local measurements.

\section{Proofs} \label{sec:proofs}

\subsection{Moment estimates of the solution}

We prove an integral estimate for the $L^4(\Lambda)$-norm of the solution to generalized stochastic Burgers equation \eqref{eq:Burgers}. We adjust Proposition 2.2 in \cite{DPD} to our situation. This result holds for $X_0 \in L^4(\Lambda)$.

\begin{proposition} \label{prop:DPD}
For all $k \geq 1$ there exist $c_1(k)$, $c_2(k)$ such that for all $X_0 \in L^4(\Lambda)$ and all $T > 0$ we have
\begin{equation}
\E \Big( \sup_{t \in [0,T]} \| X(t) \|_{L^4(\Lambda)}^k \Big) \leq c_1(k) e^{c_2(k)T} \left( 1 + \| X_0 \|_{L^4(\Lambda)}^k \right).
\end{equation} 
\end{proposition}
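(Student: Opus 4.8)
The plan is to mimic the energy/It\^o argument from Proposition 2.2 of \cite{DPD}, adapted to the presence of the extra nonlinearity $F$ and of the initial condition. First I would derive the equation satisfied by $\|X(t)\|_{L^4(\Lambda)}^4 = \int_0^1 X^4(t,x)\,dx$. Since $X$ is a mild solution and not a priori regular enough in space, the rigorous route is to work with Galerkin (or Yosida) approximations $X^{(m)}$ --- projecting the noise and the nonlinearities onto $\mathrm{span}\{e_1,\dots,e_m\}$ --- apply finite-dimensional It\^o's formula to $\|X^{(m)}(t)\|_{L^4}^4$, obtain uniform-in-$m$ bounds, and pass to the limit. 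I would only sketch this, since it is the standard approximation scheme used in \cite{DPD}, \cite{DPDT}, \cite{gatarekDap}.

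Applying It\^o's formula (at the approximate level) to $\Phi(u)=\|u\|_{L^4}^4$ produces: a drift term $4\theta\int_0^1 X^3 \partial_{xx}^2 X\,dx = -12\theta\int_0^1 X^2 (\partial_x X)^2\,dx \le 0$ (integration by parts, zero boundary data), which is favorable and can be discarded; a Burgers term $2\int_0^1 X^3\partial_x(X^2)\,dx = \frac{2}{5}\int_0^1 \partial_x(X^5)\,dx = 0$ by the Dirichlet boundary conditions --- this is the key cancellation that makes the $L^4$-estimate work; a reaction term $4\int_0^1 X^3 F(X)\,dx$; and the It\^o correction from the space-time white noise, which is $6\int_0^1 X^2(x)\,dx$ times the (formally infinite) ``trace'' --- here one exploits the smoothing of the semigroup exactly as in \cite{DPD} so that at the approximation level the correction is controlled by $C\|X^{(m)}(t)\|_{L^2}^2\le C\|X^{(m)}(t)\|_{L^4}^2$, uniformly in $m$; and finally a martingale term with zero expectation. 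For the reaction term I would use hypothesis (b) with $v=X$, $u=0$: $F(X)(x)X(x)\le C_q(1+|X(x)|^q+|X(x)|^2)$, but this gives a power $q$ which may exceed $4$, so instead I would combine it with the favorable gradient term. Concretely, $4\int X^3 F(X) = 4\int X^2\,(F(X)X) \le 4C_q\int X^2(1+|X|^q+|X|^2)\,dx$; the troublesome summand $\int |X|^{q+2}$ must be absorbed. The cleanest fix is to only prove the $L^4$-bound under the linear-growth sufficient condition (ii) stated in the paper --- under (ii), $|F(u)(x)|\le C(1+|u(x)|)$, so $4\int X^3F(X)\le C\int(|X|^3+|X|^4)\le C(1+\|X\|_{L^4}^4)$ directly. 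Alternatively, under the general hypothesis (b) one should use the $L^4$ energy estimate together with the $H^1$-type control that the gradient term supplies, interpolating $\int|X|^{q+2}$ between $\|X\|_{L^4}$ and $\|X^2\|_{H^1}=\||X|^2\|_{L^2}+\|\,\partial_x(X^2)\|_{L^2}$, paying the favorable coefficient $12\theta\int X^2(\partial_x X)^2 = 3\theta\|\partial_x(X^2)\|_{L^2}^2$. This interpolation is the main obstacle and has to be carried out carefully for $q$ up to the critical exponent; in one space dimension Gagliardo--Nirenberg gives $\|w\|_{L^r}\lesssim \|w\|_{H^1}^{a}\|w\|_{L^2}^{1-a}$ with enough room to absorb the bad term into $3\theta\|\partial_x(X^2)\|_{L^2}^2$ plus lower-order powers of $\|X\|_{L^4}$.

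Having reached a differential inequality of the form $\frac{d}{dt}\mathbb E\|X^{(m)}(t)\|_{L^4}^4 \le c\big(1+\mathbb E\|X^{(m)}(t)\|_{L^4}^4\big)$, Gr\"onwall yields $\sup_{t\le T}\mathbb E\|X^{(m)}(t)\|_{L^4}^4\le c_1 e^{c_2 T}(1+\|X_0\|_{L^4}^4)$, uniform in $m$. To get the supremum \emph{inside} the expectation and general moments $k$, I would go back to the It\^o identity, keep the martingale term, and apply the Burkholder--Davis--Gundy inequality to $\sup_{t\le T}|\text{martingale}|$; its quadratic variation is $\lesssim \int_0^T \|X^{(m)}(s)\|_{L^?}^{\,\cdot}\,ds$ type, controllable by the already-established $\sup\mathbb E$ bound, and a standard Gr\"onwall-after-BDG argument (as in Lemma 3.1 of \cite{DPDT} / Proposition 2.2 of \cite{DPD}) upgrades to $\mathbb E\big(\sup_{t\le T}\|X^{(m)}(t)\|_{L^4}^{k}\big)\le c_1(k)e^{c_2(k)T}(1+\|X_0\|_{L^4}^k)$ for every $k\ge 1$ (first for $k$ a multiple of $4$, then all $k$ by Jensen). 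Finally, lower semicontinuity of the norm and the convergence $X^{(m)}\to X$ (from Proposition \ref{gatarek} and the approximation used there) transfer the bound to $X$ itself via Fatou. The genuinely delicate point, as noted, is the treatment of the quadratically (or up-to-critically) growing reaction term $F$ together with the need to keep all constants deterministic in $X_0$; everywhere else the argument is a routine adaptation of \cite{DPD}.
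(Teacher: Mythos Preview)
Your approach has a genuine gap at the It\^o correction step. For a cylindrical Wiener process, the second-order term in It\^o's formula applied to $\Phi(u)=\|u\|_{L^4}^4$ is $6\int_\Lambda X^2(t,x)\sum_n e_n^2(x)\,dx$, and $\sum_n e_n^2(x)=+\infty$. At the Galerkin level this reads $6\int_\Lambda (X^{(m)})^2 \sum_{n\le m} e_n^2\,dx \sim C\,m\,\|X^{(m)}\|_{L^2}^2$, which diverges as $m\to\infty$; there is no ``smoothing of the semigroup'' to exploit here because you are differentiating a functional of $X$ itself, not of $S_\theta(\cdot)X$. The claim that this correction is controlled uniformly in $m$ is therefore incorrect, and the differential inequality you reach is not uniform in the approximation.

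This is exactly why the paper (following \cite{DPD}) does \emph{not} apply It\^o to $\|X\|_{L^4}^4$. Instead it subtracts a damped stochastic convolution $z_\alpha(t)=\int_0^t e^{(t-s)(\Delta-\alpha)}\,dW(s)$ and works with $Y=X-z_\alpha$, which solves a \emph{pathwise} PDE with random coefficients and \emph{no noise}. One then multiplies by $4|Y|^2Y$ and integrates; there is no It\^o correction at all. The $F$-term becomes $4\int |Y|^2 Y\,F(Y+z_\alpha)\,dx$, and hypothesis~(b) is used with $u=z_\alpha$, $v=Y$ to obtain $I_4\lesssim 1+\|Y\|_{L^4}^4+\|z_\alpha\|_{L^{2q}}^{2q}$; the damping $\alpha$ is then chosen (Proposition~2.1 of \cite{DPD}) so that $\sup_t\|z_\alpha\|_{L^{2q}}\le 1$, after which Gronwall closes the estimate on $\|Y\|_{L^4}^4$ and the rest follows as in \cite{DPD}. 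Incidentally, your application of (b) with $u=0$, $v=X$ actually gives $F(X)X\le C_q(1+|X|^2)$ (the $|u|^q$ term vanishes), so the interpolation detour you propose is unnecessary; but this does not rescue the argument, since the It\^o correction is the real obstruction.
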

\begin{proof}
We follow the setting, notation and the proof of Proposition 2.2 in \cite{DPD}. Without loss of ge\-ne\-ra\-li\-ty, we assume that $\theta = 1$. For any $\alpha \geq 0$, we set
$$
z_{\alpha}(t) = \int_0^t e^{(t-s)(\Delta - \alpha)} \, dW(s), \quad Y(t) = X(t) - z_{\alpha}(t), \quad 0 \leq t \leq T,
$$
where $(X(t), 0 \leq t \leq T)$ is the unique solution to the equation \eqref{eq:Burgers} (see Proposition \ref{gatarek}). Note that  $(Y(t), 0 \leq t \leq T)$ fulfills
$$
\frac{d}{dt} Y(t) = \partial_{xx}^2 Y(t) + \partial_x (Y(t) + z_{\alpha}(t))^2 + \alpha z_{\alpha}(t) + F(Y(t) + z_{\alpha}(t)).
$$
We follow the proof of Proposition 2.2 in \cite{DPD} using $p=4$ therein. We multiply the above equation by $4|Y(t)|^2 Y(t)$, integrate in $\bar \Lambda = [0,1]$ and then we integrate by parts. We receive the same terms $I_1$, $I_2$ and $I_3$ as in \cite{DPD} and we handle them in the same way. Here we handle only the additional term $I_4$: 
$$
I_4 = 4 \int_0^1 |Y(t, x)|^2 Y(t, x) \, F(Y(t, \cdot) + z_{\alpha}(t, \cdot))(x) \, dx.
$$

Using the assumption (c) in \eqref{bb} on the nonlinearity $F$ and the Young inequality, we have
\begin{align*}
I_4 &\leq 4 C_q \int_0^1 |Y(t, x)|^2 \left(1 + |z_{\alpha}(t, x)|^q + |Y(t, x)|^2 \right) \, dx \\
&\leq 4 C_q \int_0^1 |Y(t, x)|^2 \, dx + 4 C_q \int_0^1 |Y(t, x)|^2 |z_{\alpha}(t, x)|^q \, dx + 4 C_q \int_0^1 |Y(t, x)|^4 \, dx \\
&\leq 4 C_q \| Y(t) \|^2 + 6 C_q \| Y(t) \|_{L^4(\Lambda)}^4 + 2 C_q \| z_{\alpha}(t) \|_{L^{2q}(\Lambda)}^{2q} 
\leq C_1 \left( 1 + \| Y(t) \|_{L^4(\Lambda)}^4 + \| z_{\alpha}(t) \|_{L^{2q}(\Lambda)}^{2q} \right),
\end{align*}
\vskip -2mm 
\noindent
for some constant $C_1 > 0$. Therefore we obtain an upper bound that is analogous to the equation after (2.11) in \cite{DPD}
\begin{align*}
\frac{d}{dt} \| Y(t) \|_{L^4(\Lambda)}^4 &\leq C_2 + C_2 \left( \| z_{\alpha}(t) \|_{L^4(\Lambda)}^{8/3} + \| z_{\alpha}(t) \|_{L^4(\Lambda)} + 1 \right) \| Y(t) \|_{L^4(\Lambda)}^4 \\
&\phantom{=}+ C_2 \left( \alpha^4 \| z_{\alpha}(t) \|_{L^4(\Lambda)} + \| z_{\alpha}(t) \|_{L^4(\Lambda)}^8 + \| z_{\alpha}(t) \|_{L^{2q}(\Lambda)}^{2q} \right), 
\end{align*}
\vskip -2mm 
\noindent
for some constant $C_2 > 0$. Using Proposition 2.1 in \cite{DPD}, we choose $\alpha$ such that
$$
\sup_{t \in [0,T]} \| z_{\alpha}(t) \|_{L^{2q}(\Lambda)} \leq 1.
$$
Since $q \geq 2$, we also receive $\sup_{t \in [0,T]} \| z_{\alpha}(t) \|_{L^4(\Lambda)} \leq 1$ and we arrive at
$$
\frac{d}{dt} \| Y(t) \|_{L^4(\Lambda)}^4 \leq C_2 + 3C_2 \| Y(t) \|_{L^4(\Lambda)}^4 + C_2 (\alpha^4 + 2) \leq 2C_3 \| Y(t) \|_{L^4(\Lambda)}^4 + C_3 (\alpha^4 + 1),
$$
for some constant $C_3 > 0$. Now we use Gronwall lemma and the proof is finished as in \cite{DPD}.
\end{proof}

\subsection{Regularity of the linear part}
We consider path regularity of the stochastic convolution $\bar X$ (that is the linear part of the solution defined in \eqref{mild solution}).  

\begin{proposition} \label{prop:regulXbar}  
The process $\bar{X}$ has a continuous version with values in $H$. Moreover, for all $2 \leq p < \infty, \, 0 \leq  s < 1/2$: $\bar{X} \in C([0,T]; W^{s,p}(\Lambda))$, $\P$-a.s. In particular, we have  $\bar{X} \in C([0,T]; C(\bar \Lambda))$, $\P$-a.s. 
\end{proposition}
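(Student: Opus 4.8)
The plan is to establish the regularity of the stochastic convolution $\bar X(t) = \int_0^t S_\theta(t-s)\,dW(s)$ via the factorization method of Da Prato--Kwapie\'n--Zabczyk, combined with standard heat-kernel smoothing estimates and the Kolmogorov--Sobolev embedding argument. First I would recall that, since $\bar X$ solves the linear equation \eqref{eq:linear}, it can be written for $\alpha \in (0,1)$ via the factorization identity
\begin{equation*}
\bar X(t) = \frac{\sin(\pi\alpha)}{\pi} \int_0^t (t-s)^{\alpha-1} S_\theta(t-s) Y_\alpha(s)\, ds, \qquad Y_\alpha(s) := \int_0^s (s-r)^{-\alpha} S_\theta(s-r)\, dW(r).
\end{equation*}
The first key step is then to bound the moments of $Y_\alpha(s)$ in $W^{s,p}(\Lambda)$ (equivalently, in $L^p(\Lambda)$ after applying $(-\Delta)^{s/2}$): using the It\^o isometry for the Hilbert--Schmidt norm together with the eigenfunction expansion of $S_\theta$, one computes
\begin{equation*}
\E\,\|(-\Delta)^{s/2} Y_\alpha(s)\|_{L^2(\Lambda)}^2 = \int_0^s (s-r)^{-2\alpha} \sum_{n=1}^\infty \lambda_n^{s} e^{-2\theta\lambda_n(s-r)}\, dr \lesssim \int_0^s (s-r)^{-2\alpha - s - 1/2}\, dr,
\end{equation*}
where the bound $\sum_n \lambda_n^s e^{-2\theta\lambda_n \sigma} \lesssim \sigma^{-s-1/2}$ (valid since $\lambda_n \sim n^2$, so the sum behaves like $\int_1^\infty x^{2s} e^{-c\sigma x^2}dx \sim \sigma^{-s-1/2}$) is the crucial analytic ingredient. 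This integral is finite precisely when $2\alpha + s + 1/2 < 1$, i.e. $\alpha < 1/4 - s/2$; since we only need $s < 1/2$, such $\alpha > 0$ exists, and Gaussianity upgrades the $L^2(\Omega)$ bound to an $L^p(\Omega)$ bound for every $p$, and then (with a bit more care, using the $L^p$-boundedness of the relevant multipliers or the Burkholder inequality in $L^p(\Lambda)$) to a bound in $W^{s,p}(\Lambda)$ uniformly in $s \le T$.

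The second step is to run the factorization operator $R_\alpha \phi(t) := \int_0^t (t-s)^{\alpha-1} S_\theta(t-s)\phi(s)\,ds$: a standard lemma (Proposition 5.9 in \cite{DPZ}, or the version in \cite{DPZ96}) states that if $\alpha > 1/p$ then $R_\alpha$ maps $L^p([0,T]; W^{s,p}(\Lambda))$ continuously into $C([0,T]; W^{s,p}(\Lambda))$ — and in fact into a H\"older space, which is what ultimately gives the continuous version. So I would fix $p$ large enough that $1/p < \alpha < 1/4 - s/2$ (possible once $s < 1/2$ is fixed, by first choosing $\alpha \in (0, 1/4 - s/2)$ and then $p > 1/\alpha$), conclude from Step 1 that $Y_\alpha \in L^p([0,T]; W^{s,p}(\Lambda))$ $\P$-a.s., and hence $\bar X = c_\alpha R_\alpha Y_\alpha \in C([0,T]; W^{s,p}(\Lambda))$ $\P$-a.s. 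Taking $p$ large and $s$ close to (but below) $1/2$, the Sobolev embedding $W^{s,p}(\Lambda) \hookrightarrow C(\bar\Lambda)$ holds as soon as $s - 1/p > 0$ (one-dimensional domain), so $\bar X \in C([0,T]; C(\bar\Lambda))$ $\P$-a.s.; this also subsumes the claim that $\bar X$ has a continuous $H$-valued version. Finally, a separability/compatibility remark ensures the version obtained does not depend on the particular $(s,p)$, so one genuinely gets a single process lying in all these spaces simultaneously.

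The main obstacle is the borderline nature of the space-time white noise case: the exponent computation $\sum_n \lambda_n^s e^{-2\theta\lambda_n\sigma} \lesssim \sigma^{-s-1/2}$ forces $s < 1/2$ and leaves essentially no room, so one must be careful that the double requirement $1/p < \alpha$ and $2\alpha < 1/2 - s$ is simultaneously satisfiable — it is, but only because $s$ is strictly below $1/2$, and the constants blow up as $s \uparrow 1/2$. A secondary technical point is that to get the $W^{s,p}(\Lambda)$-valued (rather than merely $H^s = W^{s,2}$-valued) statement for $p>2$ one cannot use the It\^o isometry directly; instead one invokes the Burkholder--Davis--Gundy inequality in the UMD space $L^p(\Lambda)$ (or equivalently the hypercontractivity of Gaussian chaos together with $L^p$-$L^q$ heat-kernel bounds), which is routine but must be cited correctly. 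Since these estimates for $\bar X$ are, as the authors note, essentially classical (cf. Chapter 5 of \cite{DPZ} and Chapter 14 of \cite{DPZ96}), I expect the proof to be short, with the factorization lemma and the one displayed kernel-sum estimate doing all the work.
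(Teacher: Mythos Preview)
Your proposal is correct and follows exactly the approach the paper indicates: the paper's proof consists only of a reference to the factorization method (Section~5 in \cite{DPZ}, or Proposition~S.9 in \cite{ACP} with $\gamma=0$, $d=1$, $B=I$) together with the Sobolev embedding $W^{s,p}(\Lambda) \subset C^{0,s-1/p}(\Lambda) \subset C(\bar\Lambda)$ for $p>1/s$. You have essentially written out what that citation unpacks to, with the correct exponent bookkeeping ($\alpha < 1/4 - s/2$, then $p > 1/\alpha$) and the right embedding at the end.
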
    

For the proof we can follow Section 5 in \cite{DPZ} (using the so-called factorization method) or Proposition S.9 in \cite{ACP} with $\gamma = 0$, $d = 1$ and $B = I$. The last assertion follows by the well-known embedding: $W^{s,p}(\Lambda) \subset C^{0,s - 1/p}(\Lambda) \subset C(\bar \Lambda)$ which holds if in addition $p > 1/s$. 

By Proposition \ref{prop:algebra} we know  that, for any $s \in [0,1]$, we have, for $u, v \in H^s \cap L^{\infty}(\Lambda)$,
\begin{equation*} 
\| uv \|_s \leq \| u \|_{\infty} \| v \|_s + \| v \|_{\infty} \| u \|_s.
\end{equation*}
As a consequence we obtain the following additional re\-gu\-la\-ri\-ty result.

\begin{proposition} \label{prop:algebra2}
The process $\bar{X}$ defined in \eqref{mild solution} has the following property: For any $s \in [0, 1/2)$, we have
$
\bar{X}^2 \in C([0,T]; H^s), $ $ \P$-a.s. 
\end{proposition}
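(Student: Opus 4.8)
The plan is to derive the statement directly from the two regularity facts already available: Proposition~\ref{prop:regulXbar}, which gives $\bar X \in C([0,T]; W^{s,p}(\Lambda))$ for every $s \in [0,1/2)$ and every $p \in [2,\infty)$ and in particular $\bar X \in C([0,T]; C(\bar\Lambda))$, and the product inequality quoted just above, namely $\|uv\|_s \leq \|u\|_\infty \|v\|_s + \|v\|_\infty \|u\|_s$ for $u,v \in H^s \cap L^\infty(\Lambda)$ and $s \in [0,1]$. First I would fix $s \in [0,1/2)$. For each $t \in [0,T]$, the function $\bar X(t)$ lies in $H^s \cap L^\infty(\Lambda)$ since $H^s = H^s(\Lambda)$ coincides with $W^{s,2}(\Lambda)$ and $\bar X(t) \in C(\bar\Lambda) \subset L^\infty(\Lambda)$. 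Applying the product inequality with $u = v = \bar X(t)$ gives
$$
\|\bar X^2(t)\|_s \leq 2\,\|\bar X(t)\|_\infty \,\|\bar X(t)\|_s,
$$
so pointwise in $t$ we indeed have $\bar X^2(t) \in H^s$.

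Next I would upgrade this to continuity in $t$ with values in $H^s$. For $t, t' \in [0,T]$ write
$$
\bar X^2(t) - \bar X^2(t') = \big(\bar X(t) - \bar X(t')\big)\big(\bar X(t) + \bar X(t')\big),
$$
and apply the product inequality to the two factors $u = \bar X(t) - \bar X(t')$ and $v = \bar X(t) + \bar X(t')$, both of which belong to $H^s \cap L^\infty(\Lambda)$. This yields
$$
\|\bar X^2(t) - \bar X^2(t')\|_s \leq \|\bar X(t)-\bar X(t')\|_\infty\,\|\bar X(t)+\bar X(t')\|_s + \|\bar X(t)+\bar X(t')\|_\infty\,\|\bar X(t)-\bar X(t')\|_s.
$$
On the finite interval $[0,T]$ the quantities $\sup_{t}\|\bar X(t)\|_\infty$ and $\sup_{t}\|\bar X(t)\|_s$ are $\P$-a.s.\ finite (continuity of $t \mapsto \bar X(t)$ in $C(\bar\Lambda)$ and in $W^{s,2}(\Lambda)=H^s$), so as $t' \to t$ both $\|\bar X(t)-\bar X(t')\|_\infty \to 0$ and $\|\bar X(t)-\bar X(t')\|_s \to 0$, and hence the right-hand side tends to $0$. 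This gives $\bar X^2 \in C([0,T]; H^s)$, $\P$-a.s., which is exactly the claim.

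There is no real obstacle here: the proposition is essentially a corollary of the stated multiplication inequality combined with the known regularity of $\bar X$. The only mild point to check is that the restriction $s < 1/2$ is what makes $H^s$ (equivalently $W^{s,2}$) actually contain $\bar X(t)$ with the required norm bound, and that $s \leq 1$ keeps us in the range where Proposition~\ref{prop:algebra} applies; both hold automatically since $[0,1/2) \subset [0,1]$. One should also note that all statements are understood $\P$-a.s.\ on the event of full probability where $\bar X$ has the continuous trajectories asserted in Proposition~\ref{prop:regulXbar} simultaneously in $C(\bar\Lambda)$ and in $H^s$ for all $s$ in a countable dense subset of $[0,1/2)$, which suffices.
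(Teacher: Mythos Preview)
Your proof is correct and follows essentially the same approach as the paper: factor $\bar X^2(t)-\bar X^2(t')=(\bar X(t)-\bar X(t'))(\bar X(t)+\bar X(t'))$, apply the product inequality from Proposition~\ref{prop:algebra}, and use the continuity of $t\mapsto\bar X(t)$ in both $C(\bar\Lambda)$ and $H^s$ from Proposition~\ref{prop:regulXbar}. The paper's version is slightly terser (it omits the preliminary pointwise check that $\bar X^2(t)\in H^s$, which is implied by the continuity argument anyway), but the substance is identical.
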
   
\begin{proof} 
Fix $s \in [0, 1/2)$. By Proposition \ref{prop:algebra}, we write for any $t, r \in [0,T]$,  $\P$-a.s.,
\begin{align*}
\| \bar{X}^2(t) - \bar{X}^2(r) \|_s &= \| (\bar{X}(t) + \bar{X}(r)) (\bar{X}(t) - \bar{X}(r)) \|_s \\
&\leq \| \bar{X}(t) + \bar{X}(r) \|_{\infty} \| \bar{X}(t) - \bar{X}(r) \|_s + \| \bar{X}(t) - \bar{X}(r) \|_{\infty} \| \bar{X}(t) + \bar{X}(r) \|_s \\
& \leq C_T  \| \bar{X}(t) - \bar{X}(r) \|_s + C_T \| \bar{X}(t) - \bar{X}(r) \|_{\infty},
\end{align*} 
for some positive constant $C_T$ possibly depending on $\omega \in \Omega$. Passing to the limit $t \rightarrow r$ gives the assertion by Proposition \ref{prop:regulXbar}.
\end{proof}

\subsection{Regularity of the nonlinear part}

In this section we prove Proposition \ref{prop:regulXtilde}. We follow the ideas from Section 7.1 in \cite{P} and, in particular, the proof of Proposition 24 in \cite{P}. For $g \in C([0,T]; H)$, we define the operator $S$ by
$$
(Sg)(t) := \int_0^t S_{\theta}(t-r) g(r) \, dr, \, t \in [0,T]
$$
and we recall the assertions (7.7) and (7.8) from \cite{P} in our notation (see in particular (i) and (ii) below). We also add the well-known assertion (iii).

\begin{lemma}\label{lemma:TS}
\hspace{2em}
\begin{itemize}
\item[(i)] $T := (-A)^{-1/2} \partial_x$ is a bounded linear operator from $H^s$ into $H^s$, $s \in [0,1]$.
\item[(ii)] $S$ is a bounded linear operator from $C([0,T]; H)$ into $C([0,T]; H^s)$, $s \in [0,2)$.
\item[(iii)] $S$ is a bounded linear operator from $C([0,T]; L^1(\Lambda))$ into $C([0,T]; H^s)$, $s \in [0,3/2)$.
\end{itemize}
\end{lemma}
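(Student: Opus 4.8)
The plan is to prove the three claimed mapping properties of $T$ and $S$ by reducing everything to the spectral representation with respect to the Dirichlet eigenbasis $(e_n)$ and to standard heat-semigroup smoothing estimates. For (i), I would write $u \in H^s$ in terms of its Fourier coefficients $u_n = \langle u, e_n\rangle$ and compute $\partial_x u = \sum_n u_n e_n'$. The point is that $e_n' = \sqrt{2}\, n\pi \cos(n\pi x)$ has the same $L^2$-size as $\lambda_n^{1/2} e_n$ (more precisely $\| \partial_x u\|^2 = \sum_n \lambda_n u_n^2$ for $u \in H^1_0 = H^1$), so applying $(-A)^{-1/2}$ exactly rebalances the extra power of $\lambda_n^{1/2}$. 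Since $A = \theta\Delta$ here, $(-A)^{-1/2} = \theta^{-1/2}(-\Delta)^{-1/2}$ and one gets $\|Tu\|_s \le C_\theta \|u\|_s$ for $s \in [0,1]$; the restriction $s \le 1$ is where one needs $u$ (or its image) to stay in a space where integration by parts / the boundary behaviour is controlled, i.e. $(-\Delta)^{-1/2}\partial_x$ is bounded on $H^s$ only up to $s=1$ because beyond that $\partial_x$ does not preserve the Dirichlet-type boundary conditions encoded in the domains $H^s$.

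For (ii) and (iii), the key tool is the smoothing estimate for the analytic semigroup $S_\theta(t) = e^{t\theta\Delta}$: for $\phi \in H$ one has $\|S_\theta(t)\phi\|_s \le C_\theta\, t^{-s/2}\|\phi\|$ for $s \ge 0$, and for $\phi \in L^1(\Lambda)$ the stronger bound $\|S_\theta(t)\phi\|_s \lesssim t^{-s/2 - 1/4}\|\phi\|_{L^1(\Lambda)}$, obtained by interpolating the $L^1 \to L^2$ heat-kernel estimate $\|S_\theta(t)\phi\| \le C t^{-1/4}\|\phi\|_{L^1}$ (already used in the proof of Proposition~\ref{gatarek}) with the $L^2 \to H^s$ bound. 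Then for $g \in C([0,T];H)$ I would estimate
\begin{equation*}
\|(Sg)(t)\|_s \le \int_0^t \|S_\theta(t-r) g(r)\|_s \, dr \lesssim \int_0^t (t-r)^{-s/2} \, dr \;\sup_{r\in[0,T]}\|g(r)\| ,
\end{equation*}
which is finite precisely when $s/2 < 1$, i.e. $s < 2$; the $L^1$ version replaces the exponent by $s/2 + 1/4 < 1$, i.e. $s < 3/2$. Continuity in $t$ with values in $H^s$ then follows from a routine splitting of $(Sg)(t) - (Sg)(t')$ into the near-diagonal piece (small by the integrability just established) and the piece where one uses strong continuity of $S_\theta(\cdot)$ on $H^s$ together with dominated convergence; this is standard and I would only sketch it.

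I expect the only genuinely delicate point to be the $L^1$-smoothing estimate $\|S_\theta(t)\phi\|_s \lesssim t^{-s/2-1/4}\|\phi\|_{L^1(\Lambda)}$ underlying (iii), since one must be a little careful that the $H^s$-norm (defined via the Dirichlet fractional Laplacian) is controlled by the Bessel-potential/Sobolev scale on which the heat-kernel bounds are naturally phrased — but for $s < 3/2$ on the interval with Dirichlet data this is classical and can be quoted from \cite{Henry} or obtained from the explicit heat kernel. Everything else is bookkeeping with the Gamma-function integrals $\int_0^t (t-r)^{-a}\,dr$. I would therefore present (i) via the eigenfunction computation, state the two semigroup smoothing bounds with a one-line justification, and deduce (ii)–(iii) from the displayed integral estimate plus the standard continuity argument.
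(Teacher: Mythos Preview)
Your approach for (ii) and (iii) is exactly the paper's: it quotes \cite{P} for (i)--(ii) and then, for (iii), records precisely your smoothing bound $\|S_\theta(t)f\|_{H^s}\le C_{s,\theta}\,t^{-s/2-1/4}\|f\|_{L^1(\Lambda)}$ (obtained from $\|S_\theta(t)f\|\le C t^{-1/4}\|f\|_{L^1}$ plus the semigroup law), noting $s/2+1/4<1$; the integrability of the resulting kernel and the continuity-in-$t$ argument you sketch are the intended ones.

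For (i), however, your ``eigenfunction computation'' does not work as written. The derivatives $e_n' = \sqrt{2}\,n\pi\cos(n\pi x)$ are \emph{not} eigenfunctions of the Dirichlet Laplacian, so $(-A)^{-1/2}$ does not act on $\partial_x u$ by simply multiplying the $n$-th coefficient by $\lambda_n^{-1/2}$; the ``rebalancing'' you describe is only heuristic. What is correct is the endpoint $s=1$ (your identity $\|\partial_x u\|^2 = \sum_n \lambda_n u_n^2$ gives $\|(-\Delta)^{1/2}Tu\|=\|\partial_x u\|=\|u\|_1$), and for $s=0$ one checks directly that, with $w = (-\Delta)^{-1}\partial_x u$, integration by parts yields $\|Tu\|^2=\langle w,\partial_x u\rangle = \|u\|^2 - (\int_\Lambda u)^2 \le \|u\|^2$. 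Complex interpolation then gives $s\in(0,1)$. So the statement is fine, but replace the spectral heuristic by this two-endpoint-plus-interpolation argument (this is essentially what lies behind the citation of \cite{P}). Also note that in the paper's convention $A=\Delta$, not $\theta\Delta$; the $\theta$-dependence sits in $S_\theta$, not in $T$.
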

Assertion (iii) can be easily deduced by the following well-known estimate: For any $s \in [0, 3/2)$, $f \in L^{1}(\Lambda)$, we have that $S_{\theta}(t) f \in H^s$, $t > 0$, and
$$
\| S_{\theta}(t) f \|_{H^s} \leq C_{s, \theta} \frac{1}{t^{s/2 + 1/4}} \| f \|_{L^1(\Lambda)}. 
$$
This follows using  the semigroup law, estimate in (2.6) in \cite{DPDT} and the fact that $\| S_{\theta}(t) f \|_H \leq C_{\theta} \frac{1}{t^{1/4}} \| f \|_{L^{1}(\Lambda)}$. Note that $s/2 + 1/4 < 1$.

For $u \in C([0,T]; H^1)$, define the operator $R$ by
\begin{equation} \label{operator R}
(Ru)(t) := \int_0^t S_{\vartheta}(t-r) \partial_x u(r) \, dr, \quad t \in [0,T].
\end{equation}
We summarize the properties of the operator $R$.

\begin{lemma}\label{lemma:R}
\hspace{2em}
\begin{itemize}
\item[(i)] The operator $R$ can be extended to a linear, bounded ope\-ra\-tor from $C([0,T]; L^1(\Lambda))$ into $C([0,T]; H^s)$, $s \in [0, 1/2)$.
\item[(ii)] The operator $R$ can be extended to a linear, bounded operator from $C([0,T]; H)$ into $C([0,T]; H^s)$, $s \in [0, 1)$.
\item[(iii)] The operator $R$ can be extended to a linear, bounded operator from $C([0,T]; H^s)$, $s \in [0, 1/2)$ into $C([0,T]; H^s), \, s \in [0, 3/2)$.
\item[(iv)] The mapping $u \mapsto R(u^2)$ is continuous from $C([0,T]; H)$ into $C([0,T]; H^s)$, $s \in [0, 1/2)$.
\end{itemize}
\end{lemma}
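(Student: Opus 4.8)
The plan is to prove the four assertions of Lemma~\ref{lemma:R} by combining the mapping properties of $S$ from Lemma~\ref{lemma:TS} with the algebraic identity $S_\vartheta(t-r)\partial_x = S_\vartheta(t-r)(-A)^{1/2}\,(-A)^{-1/2}\partial_x = (-A)^{1/2}S_\vartheta(t-r)\,T$, where $A = \vartheta\Delta$ and $T$ is the bounded operator of Lemma~\ref{lemma:TS}(i). Thus formally $(Ru)(t) = (-A)^{1/2}\big(S(Tu)\big)(t)$, and the point in each case is to quantify how much regularity the extra half-derivative $(-A)^{1/2}$ costs, relative to what $S$ gains. Since $(-A)^{1/2}$ maps $H^{\sigma}$ boundedly into $H^{\sigma-1}$, and $T$ is bounded on $H^{\sigma}$ for $\sigma\in[0,1]$, each statement reduces to a corresponding statement about $S$ with the target index shifted up by one.

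For (i): if $u\in C([0,T];L^1(\Lambda))$ then $Tu$ need not make sense pointwise, so instead I would argue directly via the kernel bound $\|S_\vartheta(t)\partial_x f\|_{H^s} \le C_{s,\vartheta}\, t^{-(s+1)/2 - 1/4}\|f\|_{L^1(\Lambda)}$, which follows from the heat-kernel estimate (2.6) in \cite{DPDT} exactly as in the remark after Lemma~\ref{lemma:TS}, together with $\|S_\vartheta(t)\partial_x f\|_H \le C t^{-3/4}\|f\|_{L^1}$. The exponent $(s+1)/2+1/4$ is $<1$ precisely when $s<1/2$, so the time integral converges and a standard continuity-in-$t$ argument (splitting $[0,t]$ and using strong continuity of the semigroup on the remainder) gives $Ru\in C([0,T];H^s)$ with the bound linear in $\sup_t\|u(t)\|_{L^1}$; density of $C([0,T];H^1)$ in $C([0,T];L^1(\Lambda))$ then yields the extension. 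For (ii): now $Tu\in C([0,T];H)$ with $\|Tu\|\lesssim\|u\|$, and by Lemma~\ref{lemma:TS}(ii) $S(Tu)\in C([0,T];H^{s+1})$ for $s+1\in[0,2)$, i.e.\ $s\in[0,1)$; applying $(-A)^{1/2}$ loses one derivative and lands us in $C([0,T];H^s)$. For (iii): start from $u\in C([0,T];H^s)$ with $s\in[0,1/2)$, note $Tu\in C([0,T];H^s)\subset C([0,T];L^1(\Lambda))$, and apply Lemma~\ref{lemma:TS}(iii) to get $S(Tu)\in C([0,T];H^{\sigma+1})$ for $\sigma+1<3/2$, hence after $(-A)^{1/2}$ we obtain $C([0,T];H^{\sigma})$ for $\sigma<1/2$; alternatively, and this is what we really want for the target range $[0,3/2)$, use the sharper kernel bound above with $f\in H^s\hookrightarrow L^1$ to push the target index up to $s+1<3/2$ directly. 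I would state (iii) in the form that covers the full target interval $[0,3/2)$ by this route. For (iv): write $R(u^2) - R(v^2) = R\big((u+v)(u-v)\big)$; by the Sobolev product/multiplication estimate (Proposition~\ref{prop:algebra}, or simply $\|fg\|_{L^1}\le\|f\|\,\|g\|$ for the $L^1$ version) the map $u\mapsto u^2$ is continuous (indeed locally Lipschitz) from $C([0,T];H)$ into $C([0,T];L^1(\Lambda))$, and composing with the bounded operator $R\colon C([0,T];L^1(\Lambda))\to C([0,T];H^s)$ from part (i) gives the claim for $s\in[0,1/2)$.

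The main obstacle is part (i), the $L^1\to H^s$ bound for $R$: one must be careful that the naive chain $R = (-A)^{1/2}S T$ is illegitimate on $L^1$ data (both $T$ and $(-A)^{1/2}$ are problematic there), so the argument has to go through the explicit kernel estimate and a genuine density argument, tracking that the constant in $\|S_\vartheta(t)\partial_x f\|_{H^s}\lesssim t^{-(s+1)/2-1/4}\|f\|_{L^1}$ is uniform in $t\in(0,T]$ and that the borderline $s=1/2$ is genuinely excluded. Everything else is a bookkeeping exercise in composing the already-established mapping properties and keeping track of the one-derivative loss from $(-A)^{1/2}$; continuity in $t$ in each case is the routine ``split the convolution at $t-\eps$'' argument and I would not spell it out in detail.
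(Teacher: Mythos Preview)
Your arguments for parts (i), (ii), and (iv) are correct and follow essentially the same route as the paper. For (i) the paper simply cites Lemma~14.2.1 in \cite{DPZ96}, which amounts to the kernel estimate you wrote down; for (ii) the paper uses exactly your chain $R=(-A)^{1/2}\circ S\circ T$ with Lemma~\ref{lemma:TS}(ii); for (iv) the paper also factors through $u\mapsto u^2$ as a continuous map $C([0,T];H)\to C([0,T];L^1(\Lambda))$ and then invokes (i).

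Part (iii), however, has a genuine gap. Your first attempt---embed $Tu\in C([0,T];H^s)\subset C([0,T];L^1)$ and use Lemma~\ref{lemma:TS}(iii)---indeed only yields a target index $<1/2$, as you correctly note. But your ``alternative'' does not fix this: the phrase ``use the sharper kernel bound above with $f\in H^s\hookrightarrow L^1$'' still only controls $f$ in $L^1$-norm, and the $L^1\to H^\sigma$ bound from (i) cannot reach $\sigma\ge 1/2$. Embedding into $L^1$ throws away exactly the $H^s$ regularity you need to exploit. To reach target indices up to $s+1<3/2$ you must use the input $H^s$ norm, not merely the fact that $H^s\subset L^1$.

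The paper's device is to split the half-derivative as $(-A)^{1/2}=(-A)^{(1-s)/2}(-A)^{s/2}$ and commute $(-A)^{s/2}$ with the semigroup integral, writing
\[
(Ru)(t)=(-A)^{(1-s)/2}\int_0^t S_\vartheta(t-r)\,g(r)\,dr,\qquad g(r):=(-A)^{s/2}\,T u(r).
\]
Since $T$ is bounded on $H^s$ (Lemma~\ref{lemma:TS}(i)) one has $g\in C([0,T];H)$, so Lemma~\ref{lemma:TS}(ii) gives $Sg\in C([0,T];H^{2-\eps})$, and applying $(-A)^{(1-s)/2}$ loses only $1-s$ derivatives, landing in $C([0,T];H^{1+s-\eps})$. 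Taking $s$ close to $1/2$ covers the full target range $[0,3/2)$. This absorption of $(-A)^{s/2}$ into $g$ is the missing idea in your sketch.
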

\begin{proof}
(i). This assertion follows from Lemma 14.2.1 of \cite{DPZ96}.

(ii). Using Lemma \ref{lemma:TS}(i) we can write, for $u \in C([0,T]; H)$,
$$
(Ru)(t) = \int_0^t S_{\vartheta}(t-r) \partial_x u(r) \, dr = \int_0^t S_{\vartheta}(t-r) (-A)^{1/2} [(-A)^{-1/2} \partial_x] u(r) \, dr, \quad t \in [0,T].
$$
Note that $[(-A)^{-1/2} \partial_x]u \in C([0,T]; H)$. By Lemma \ref{lemma:TS}(ii) we know that, for any $\eps \in (0,1]$,
$$
t \mapsto (-A)^{1 - \eps} \int_0^t S_{\vartheta}(t-r) [(-A)^{-1/2} \partial_x] u(r) \, dr
$$
belongs to $C([0,T]; H)$. Hence
$$
(-A)^s Ru \in C([0,T]; H), \, s \in [0, 1/2), \text{ i.e., } Ru \in C([0,T]; H^s), \, s \in [0, 1).
$$

(iii). Fix $s \in [0, 1/2)$ and take $u \in C([0,T]; H^s)$. We can write
\begin{align}
(Ru)(t) &= \int_0^t S_{\vartheta}(t-r) \partial_x u(r) \, dr = (-A)^{1/2} \int_0^t S_{\vartheta}(t-r) (-A)^{-s/2} \underbrace{(-A)^{s/2} [(-A)^{-1/2} \partial_x] u(r)}_{=: g(r)} \, dr \notag \\
&= (-A)^{1/2 - s/2} \int_0^t S_{\vartheta}(t-r) g(r) \, dr. \label{eq:eta}
\end{align}

By Lemma \ref{lemma:TS}(i), the function $g \in C([0,T]; H)$ and consequently the integral on the right-hand side of \eqref{eq:eta} belongs to $C([0,T]; H^{2 - \eps})$ for any small $\eps > 0$ by  Lemma \ref{lemma:TS}(ii). But that \hbox{means} that $Ru \in C([0,T]; H^{1 + s - \eps})$. Since we can take $s$ as close to $1/2$ as we wish, we conclude with $Ru \in C([0,T]; H^s)$ for $s \in [0, 3/2)$.

(iv). The mapping: $h \mapsto h^2$ is continuous from $C([0,T]; H)$ into $C([0,T]; L^1(\Lambda))$, so the assertion follows from (i). 
\end{proof}

For $u \in C([0,T]; H)$, define the operator $Q$ by
\begin{equation} \label{def:Q}
(Qu)(t) := \int_0^t S_{\vartheta}(t-r) F(u(r)) \, dr, \quad t \in [0,T].
\end{equation}
Note that for any $t \in [0,T]$, $u(t)$ belongs to a compact set in $H$, hence \eqref{def:Q} is well defined. We formulate the desired property of the operator $Q$ in the following lemma.

\begin{lemma} \label{lemma:Q}
For any $R > 0$, the operator $Q$ is a nonlinear operator from $C([0,T]; B(0,R))$ into $C([0,T]; H^s)$, $s \in [0,3/2)$.
\end{lemma}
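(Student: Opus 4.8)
I would prove Lemma \ref{lemma:Q} by combining assumption (c) on $F$ with the smoothing estimate underlying Lemma \ref{lemma:TS}(iii), together with the regularity of the mild solution already established. The key observation is that on $C([0,T]; B(0,R))$ the argument $u(t)$ of $F$ is not merely in $H$ but — since $u$ arises as the relevant part of the mild solution — it also lies in $C(\bar\Lambda)$ (cf. Proposition \ref{prop:regulXtilde} and Proposition \ref{prop:regulXbar}), so $u(t) \in L^\infty(\Lambda)$ uniformly on $[0,T]$. Then assumption (c) gives $F(u(r)) \in L^p(\Lambda)$ for some $p > 2$, with a bound uniform in $r \in [0,T]$, hence $F(u(\cdot)) \in C([0,T]; L^p(\Lambda))$ once we check continuity in $r$. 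Since $L^p(\Lambda) \hookrightarrow L^1(\Lambda)$, Lemma \ref{lemma:TS}(iii) applied to $g = F(u(\cdot))$ immediately yields $Qu = S(F(u(\cdot))) \in C([0,T]; H^s)$ for all $s \in [0, 3/2)$, which is exactly the claim.

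First I would record that, for $u \in C([0,T]; B(0,R))$ coming from the solution, the set $\{u(t) : t \in [0,T]\}$ is relatively compact in $H$ and bounded in $L^\infty(\Lambda)$; let $M := \sup_{t} \|u(t)\|_\infty < \infty$. Next I would invoke assumption (c): $F$ maps the bounded set $\{v \in L^\infty(\Lambda) : \|v\|_\infty \le M\}$ into a bounded set of $L^p(\Lambda)$, so $\sup_{t \in [0,T]} \|F(u(t))\|_{L^p(\Lambda)} =: M' < \infty$. Then I would check $r \mapsto F(u(r))$ is continuous into $L^1(\Lambda)$: this follows from assumption (a) (local Lipschitz continuity of $F : B(0,R) \to L^1(\Lambda)$) combined with the continuity of $r \mapsto u(r)$ in $H$; so in fact $F(u(\cdot)) \in C([0,T]; L^1(\Lambda))$ directly, and that plus the uniform $L^p$-bound (using e.g. dominated convergence, or just noting $L^1$-continuity already suffices for Lemma \ref{lemma:TS}(iii)) puts us in position to apply the smoothing lemma.

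Finally I would apply Lemma \ref{lemma:TS}(iii) to conclude $Qu = S(F(u(\cdot))) \in C([0,T]; H^s)$ for every $s \in [0,3/2)$, with the $H^s$-norm controlled in terms of $\sup_t \|F(u(t))\|_{L^1(\Lambda)}$, hence ultimately in terms of $M$ and thus of $R$ (using Propositions \ref{prop:regulXbar} and \ref{prop:regulXtilde} to bound $\|u(t)\|_\infty$ by a function of $R$). This gives the asserted mapping property; the nonlinearity of $Q$ is harmless since the lemma only claims $Q$ maps the stated space into the stated space, not any Lipschitz bound.

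\textbf{Main obstacle.} The one genuinely delicate point is that assumption (c) controls $F$ only on bounded sets of $L^\infty(\Lambda)$, not on bounded sets of $H$; so the statement "operator from $C([0,T]; B(0,R))$ into $C([0,T]; H^s)$" is really only meaningful when the relevant $u$ additionally enjoys the $C(\bar\Lambda)$-regularity guaranteed by the splitting $u = \bar X + \widetilde X$ with Propositions \ref{prop:regulXbar} and \ref{prop:regulXtilde}. I would make this explicit: $Q$ is applied in the fixed-point/splitting scheme only to arguments already known to lie in $C([0,T]; C(\bar\Lambda))$, and it is there that the uniform $L^\infty$-bound — and hence the $L^p$-bound from (c) — is available. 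A secondary routine check is verifying $r \mapsto F(u(r))$ is continuous into $L^1(\Lambda)$ (from (a)), but that is immediate from local Lipschitz continuity and the compactness/boundedness of the range of $u$.
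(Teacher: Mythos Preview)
Your proposal is correct in its essentials but needlessly complicated, and the ``main obstacle'' you identify is a phantom. The paper's proof uses only assumption (a) and Lemma \ref{lemma:TS}(iii): since $F: H \to L^1(\Lambda)$ is locally Lipschitz on $B(0,R)$ by (a), for any $u \in C([0,T]; B(0,R))$ one has directly
\[
\| F(u(r)) - F(u(s)) \|_{L^1(\Lambda)} \leq C_R \| u(r) - u(s) \| \to 0 \quad \text{as } r \to s,
\]
so $F(u(\cdot)) \in C([0,T]; L^1(\Lambda))$, and Lemma \ref{lemma:TS}(iii) immediately gives $Qu = S(F(u(\cdot))) \in C([0,T]; H^s)$ for all $s \in [0,3/2)$. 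That is the entire proof.

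Your detour through assumption (c), the $L^\infty$-bound $M = \sup_t \|u(t)\|_\infty$, and the $L^p$-control on $F(u(t))$ is unnecessary: you yourself note parenthetically that ``$L^1$-continuity already suffices for Lemma \ref{lemma:TS}(iii)'', and that is exactly the point. Assumption (c) is used elsewhere in the paper (in the proof of Lemma \ref{lemma:U4}), but not here. Consequently, your worry that the lemma only makes sense for $u$ arising from the splitting $\bar X + \widetilde X$ with $C(\bar\Lambda)$-regularity is unfounded: the mapping property holds for \emph{every} $u \in C([0,T]; B(0,R))$, with no $L^\infty$ assumption needed, because $F$ is by hypothesis a map $H \to L^1(\Lambda)$ that is locally Lipschitz in the sense of (a). So drop the invocation of (c), drop the appeal to Propositions \ref{prop:regulXbar} and \ref{prop:regulXtilde} for the $L^\infty$-bound, and the argument collapses to the paper's two-line proof.
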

\begin{proof}
Fix $R > 0$ and take $u \in C([0,T]; B(0,R))$. In such situation $F = F|_{B(0,R)}$ and we have $Qu = S(F(u(\cdot)))$. First, we will show that $F(u(\cdot)) \in C([0,T]; L^1(\Lambda))$.

Clearly, $F(u(r)) \in L^1(\Lambda)$ for any $r \in [0,T]$ and by the local Lipschitz property of $F$, we have
$$
\| F(u(r)) - F(u(s)) \|_{L^1(\Lambda)} \leq C_R \| u(r) - u(s) \| \rightarrow 0,
$$
as $r \rightarrow s$. Hence $F(u(\cdot)) \in C([0,T]; L^1(\Lambda))$ for $u \in C([0,T]; B(0,R))$ and the target space of the operator $Q$ is then provided by Lemma \ref{lemma:TS}(iii).
\end{proof}

\begin{proof}[Proof of Proposition \ref{prop:regulXtilde}]
Recall that for any $t \in [0,T]$
$$
X(t) = \widetilde{X}(t) + \bar{X}(t) = S_{\theta}(t) X_0 + \frac{1}{2} (R(X^2))(t) + (Q(X))(t) + \bar{X}(t).
$$
{\bf  Step 1}. We first establish  the spatial regularity of  $v_{\theta}(t) = S_{\theta}(t) X_0$. 

It is easy to see that  $v_{\theta} \in C([0,T]; H^s) \cap C([0,T]; \bar C(\Lambda))$, for any $s \in [0, 1/2)$, using all the regularity properties of $X_0$. Let us check that property \eqref{continua} holds when $\widetilde X$ is replaced by $v_{\theta}$. 

Let us fix $s \in [1/2, 3/2)$. We  consider $\eps \in (0,1/2)$ such that $s-1 + 2 \eps < 1/2$. It is clear that $v_{\theta} \in C((0,T]; H^s)$ using the regularizing properties of the heat semigroup. Moreover,  
\begin{align*}
\|t^{1/2 - \eps} (-\Delta)^{s/2} v_{\theta}(t)\| &= t^{1/2 - \eps} \| (-\Delta)^{1/2 - \eps} S_{\theta}(t) (-\Delta)^{s/2 -\frac{1}{2} + \eps} X_0 \| \\
&\leq C_{s, \eps} t^{1/2 - \eps} \frac{1}{ t^{1/2 - \eps} } \, \| X_0\|_{H^{s-1 + 2 \eps}} \leq C_{s, \eps} \| X_0\|_{H^{s-1 + 2 \eps}},
\end{align*}
for  $t \in (0,T]$ (see, for instance, formula (5.24) in page 135 of \cite{DPZ}). We have verified \eqref{continua} for $v_{\theta}$.   
  
\vskip 1mm
\noindent
{\bf  Step 2}. We recall that $X$, $\bar X$ and $\widetilde{X} \in C([0,T]; H^s)$, $\P$-a.s., for $s \in [0, 1/2)$.

We know that the solution $X \in C([0,T]; H^s)$, $\P$-a.s., for $s \in [0, 1/2)$ (cf. Lemma 14.2.1 in \cite{DPZ96}). For the stochastic convolution $\bar{X}$,  we know that from Proposition \ref{prop:regulXbar}, for the term $R(X^2)$ of the nonlinear part $\widetilde{X}$, it follows from Lemma \ref{lemma:R}(iv), and for the term $Q(X)$, it follows from Lemma \ref{lemma:Q}. (We fix $\omega \in \Omega'$ for some event which verifies $\P(\Omega') = 1$, so $X(\omega) \in C([0,T]; B(0, R(\omega)))$ and therefore $Q(X(\omega)) \in C([0,T]; H^s)$ for any $s \in [0,3/2)$.) Recall that we assume in particular that $X_0 \in H^{s}$, for any $s \in [0,1/2)$.
 It is then clear that also  $\widetilde{X} \in C([0,T]; H^s)$, $\P$-a.s., for any $s \in [0, 1/2)$. 
 
To get more spatial regularity for $X$ and $\widetilde{X}$ we write 
$$
\widetilde{X}(t) = S_{\theta}(t) X_0 + \widetilde{Y}(t), \quad \text{where $\widetilde{Y}(t) = \frac{1}{2} (R(X^2))(t) + (Q(X))(t)$}.
$$
{\bf Step 3}. We show that $X, \widetilde{X} \in C([0,T]; C(\bar \Lambda))$, $\P$-a.s.

Fix $s = 1/4$. By the continuous inclusion $H^{1/4} \subset L^4(\Lambda)$, we get $X^2 \in C([0,T]; H)$. Using Lemma \ref{lemma:R}(ii) with $u = X^2$, we obtain $R(X^2) \in C([0,T]; H^s)$ for any $s \in [0, 1)$. Using Lemma \ref{lemma:Q} with $u = X$, we obtain $Q(X) \in C([0,T]; H^s)$ for any $s \in [0,3/2)$.

We obtain that $\widetilde{Y} \in C([0,T]; H^s)$, $\P$-a.s., for $s \in [0, 1)$. By a well known Sobolev embedding this implies that $\widetilde{Y} \in C([0,T]; C(\bar \Lambda))$, $\P$-a.s. Using also Step 1 and the fact that $\bar X\in C([0,T]; C(\bar \Lambda))$, $\P$-a.s. by Proposition \ref{prop:regulXbar}, we obtain the assertion.

\vskip 1mm 
\noindent
{\bf Step 4}. We show that $\widetilde{X}$ verifies \eqref{continua}. 
 
We already know that $\bar{X}, \widetilde{X} \in C([0,T]; H^s) \cap C([0,T]; C(\bar \Lambda))$, $\P$-a.s., for any $s \in [0,1/2)$. Since 
$$ 
X^2(t) = (\bar{X}(t) + \widetilde{X}(t))^2, \quad t \in [0,T],
$$ 
we can argue as in the proof of Proposition \ref{prop:algebra2} using Proposition \ref{prop:algebra}. We deduce easily that it is also indeed true for the solution to \eqref{eq:Burgers} that $X^2 \in C([0,T]; H^s) \cap C([0,T]; C(\bar \Lambda))$, $\P$-a.s., for $s \in [0,1/2)$. 

Using Lemma \ref{lemma:R}(iii) with $u = X^2$, we obtain $R(X^2) \in C([0,T]; H^s)$ for any $s \in [0, 3/2)$.
 
Recall that by  Lemma \ref{lemma:Q} we have $Q(X) \in C([0,T]; H^s)$ for any $s \in [0, 3/2)$. We finally obtain that $\widetilde{Y} \in C([0,T]; H^s)$, $\P$-a.s., for $s \in [0, 3/2)$. 
Combining this fact with Step 1 we obtain the assertion. 
\end{proof}

\subsection{Integrability of the nonlinear part} \label{subsection:int}
\begin{proposition} \label{prop:L2 integrability} Let $X_0 \in L^4(\Lambda)$.
The nonlinear part of the solution $\widetilde{X}$ defined in \eqref{mild solution} satisfies $\widetilde{X} \in L^2(\Omega \times [0,T]; C(\bar{\Lambda}))$.
\end{proposition}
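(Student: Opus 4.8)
The plan is to control the $C(\bar\Lambda)$-norm of $\widetilde X(t)$ by a quantity that is integrable in $(\omega,t)$, combining the deterministic regularity estimate of Proposition \ref{prop:regulXtilde} with the moment bounds of Propositions \ref{prop:DPD} and \ref{prop:regulXbar}. First I would recall the splitting $\widetilde X(t) = S_\theta(t)X_0 + \frac12 (R(X^2))(t) + (Q(X))(t)$. The term $S_\theta(t)X_0$ is the easiest: since $X_0\in L^4(\Lambda)\hookrightarrow H$ (actually we only need $X_0\in H$ here, but the statement assumes $L^4$), $\|S_\theta(t)X_0\|_\infty$ is bounded by a deterministic constant, hence trivially in $L^2(\Omega\times[0,T];C(\bar\Lambda))$. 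So the real work is the two nonlinear terms.

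For the term $Q(X)(t) = \int_0^t S_\theta(t-r) F(X(r))\,dr$, I would use hypothesis (c): $F$ maps bounded sets of $L^\infty(\Lambda)$ into bounded sets of $L^p(\Lambda)$ for some $p>2$, together with the smoothing estimate $\|S_\theta(t)\phi\|_{H^s}\lesssim t^{-s/2 + \text{(something)}}\|\phi\|_{L^p}$ chosen so that $H^s\hookrightarrow C(\bar\Lambda)$ while the time singularity stays integrable; then $\|Q(X)(t)\|_\infty \lesssim \int_0^t (t-r)^{-\alpha}\|F(X(r))\|_{L^p}\,dr$ with $\alpha<1$. Now $\|F(X(r))\|_{L^p}$ is bounded by a deterministic function of $\sup_{[0,T]}\|X(r)\|_\infty$ via (c), and $\sup_{[0,T]}\|X\|_\infty \le \sup_{[0,T]}\|\bar X\|_\infty + \sup_{[0,T]}\|\widetilde X\|_\infty$. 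The first has all moments (Proposition \ref{prop:regulXbar} plus Gaussianity/Fernique), and the second is bounded, $\P$-a.s., by the a priori estimate \eqref{sty} in terms of $\mu = \sup_{[0,T]}\|\bar X(t)\|^{q+4}_{C(\bar\Lambda)}$ and $\|X_0\|^2$ — which, again by Gaussian integrability of $\sup\|\bar X\|_\infty$, has finite moments of all orders. Taking the convolution (Young/Minkowski in time) and then squaring and taking expectation, everything reduces to finite moments of $\sup_{[0,T]}\|\bar X\|_\infty$ and $\|X_0\|_{L^4}$.

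For the Burgers term $\frac12 R(X^2)(t)$, I would invoke Lemma \ref{lemma:R}: by (ii), $R$ is bounded from $C([0,T];H)$ into $C([0,T];H^s)$ for $s\in[0,1)$, and $H^s\hookrightarrow C(\bar\Lambda)$ for $s>1/2$; since $X^2 = (\bar X+\widetilde X)^2 \in C([0,T];H)$ with $\|X^2(t)\|_H \le \|X(t)\|_{L^4}^2$, I get $\sup_{[0,T]}\|R(X^2)(t)\|_\infty \lesssim \sup_{[0,T]}\|X(t)\|_{L^4}^2$, and Proposition \ref{prop:DPD} gives $\E\big(\sup_{[0,T]}\|X\|_{L^4}^4\big) < \infty$ (apply it with $k=4$). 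Hence this term is in $L^2(\Omega\times[0,T];C(\bar\Lambda))$ as well, in fact with a clean bound not even needing the pathwise constant from \eqref{sty}. Putting the three pieces together via the triangle inequality and $(a+b+c)^2\le 3(a^2+b^2+c^2)$ gives the claim.

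The main obstacle I anticipate is the bookkeeping around the term $Q(X)$: one must verify that the pathwise (random) constant $C_{\theta,s,\eps,X_0}$ from Proposition \ref{prop:regulXtilde} — or more directly the constant $C(\omega,q,T,\theta)$ and $\mu$ from the a priori estimate \eqref{sty} — actually has finite second (indeed all) moments, which rests on the fact that $\sup_{t\in[0,T]}\|\bar X(t)\|_{C(\bar\Lambda)}$ is a supremum of a Gaussian process and hence has Gaussian tails (Fernique's theorem), so all its polynomial moments are finite. One should also double-check that $F$ genuinely sends the a.s.-bounded set $\{X(t):t\in[0,T]\}\subset L^\infty(\Lambda)$ into a bounded set of $L^p$ with a bound that is a (deterministic, locally bounded) function of the $L^\infty$-radius, so that taking expectations is legitimate; this is exactly what hypothesis (c) provides, and this is the one place where (c) — rather than (a), (b) — is essential for integrability as opposed to mere pathwise regularity.
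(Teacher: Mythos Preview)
Your treatment of $R(X^2)$ is clean and correct: Lemma~\ref{lemma:R}(ii) plus $\|X^2(t)\|_H\le\|X(t)\|_{L^4}^2$ and Proposition~\ref{prop:DPD} with $k=4$ do the job. The gap is in the $Q(X)$ term. First, the a~priori estimate \eqref{sty} controls $\|\widetilde X(t)\|_{H}$, not $\|\widetilde X(t)\|_{\infty}$, so it does not give you $\sup_{[0,T]}\|X\|_\infty$ as you need in order to invoke hypothesis~(c). Second, and more seriously, even the $H$-bound in \eqref{sty} carries the factor $e^{T\mu}$ with $\mu=\sup_{t}\|\bar X(t)\|_{C(\bar\Lambda)}^{q+4}$ and $q+4\ge 6$; Fernique's theorem only gives $\E\exp(c\sup\|\bar X\|_\infty^{2})<\infty$ for small $c$, so $e^{T\mu}$ fails to have any finite moment, and your ``has finite moments of all orders'' claim is false. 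Third, hypothesis~(c) as stated is a bounded-sets-to-bounded-sets statement with no growth rate on $M\mapsto C_M$, so even if $\sup\|X\|_\infty$ had all moments you could not conclude that $\|F(X(r))\|_{L^p}$ does. (Incidentally, $\|S_\theta(t)X_0\|_\infty$ is not bounded for $X_0\in L^4$; it blows up like $t^{-1/8}$, which is square-integrable in $t$, so this piece is fine after a small correction.)

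The paper avoids all of this by not trying to bound $\|F(X)\|$ from above at all. Instead it runs an $L^2$ energy estimate on \eqref{eq:nonlinear}: multiplying by $\widetilde X$, integrating in $x$, and using hypothesis~(b) --- the sign-type bound $F(\bar X+\widetilde X)\cdot\widetilde X\le C_q(1+|\bar X|^q+|\widetilde X|^2)$ --- kills the dangerous part of $F$ and yields
\[
\frac{d}{dt}\|\widetilde X(t)\|^2 + \theta\|\widetilde X(t)\|_1^2 \le C\big(1+\|\bar X(t)\|_{L^4}^4+\|\widetilde X(t)\|_{L^4}^4+\|\bar X(t)\|_{L^q}^q+\|\widetilde X(t)\|^2\big).
\]
Integrating in $t$ and taking expectations (now everything on the right is polynomial in $\|X(t)\|_{L^4}$ and $\|\bar X(t)\|_{L^q}$, so Proposition~\ref{prop:DPD} and Gaussian moments suffice) gives $\E\int_0^T\|\widetilde X(t)\|_1^2\,dt<\infty$, and the Sobolev embedding $H^1\hookrightarrow C(\bar\Lambda)$ concludes. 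Note that this uses only (a) and (b); the paper explicitly remarks that (c) is \emph{not} needed here. The moral: for moment bounds on $\widetilde X$ you must exploit the structural inequality (b) through the energy method, because the pathwise constants from Proposition~\ref{prop:regulXtilde} or \eqref{sty} are exponential in a Gaussian supremum and cannot be integrated.
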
 
\begin{proof}
We first find an upper bound for $\E \int_0^T \| \widetilde{X}(t) \|_1^2 \, dt$.

We modify the proof of Theorem 14.2.4 in \cite{DPZ96}. In particular we start as in formula \eqref{eq:integrability} in the proof of Proposition \ref{gatarek} with the same notation. We take into account formulae \eqref{eq:integrability} and \eqref{d66}. However we change the estimates in \eqref{s66} as follows:  
\begin{align*}
&\frac{1}{2} \frac{d}{dt} \| \widetilde{X}(t) \|^2 + \theta \| \widetilde{X}(t) \|_1^2 \\
&\leq \| \bar{X}(t) \|_{L^4(\Lambda)} \| \widetilde{X}(t) \|_{L^4(\Lambda)} \| \widetilde{X}(t) \|_1 + \frac{1}{2} \| \bar{X}(t) \|_{L^4(\Lambda)}^2 \| \widetilde{X}(t) \|_1 + C_q \left( 1 + \| \bar{X}(t) \|_{L^q(\Lambda)}^q + \| \widetilde{X}(t) \|^2 \right) \\
&\leq \frac{\theta}{4} \| \widetilde{X}(t) \|_1^2 + \frac{2}{\theta} \| \bar{X}(t) \|_{L^4(\Lambda)}^4 + \frac{2}{\theta} \| \widetilde{X}(t) \|_{L^4(\Lambda)}^4 + \frac{\theta}{4} \| \widetilde{X}(t) \|_1^2 + \frac{1}{\theta} \| \bar{X}(t) \|_{L^4(\Lambda)}^4 \\
&\phantom{\leq}+ C_q + C_q \| \bar{X}(t) \|_{L^q(\Lambda)}^q + C_q \| \widetilde{X}(t) \|^2,
\end{align*}
for a.e. $t \in (0,T].$ Therefore, we find  
$$
\frac{d}{dt} \| \widetilde{X}(t) \|^2 + \theta \| \widetilde{X}(t) \|_1^2 \leq \frac{6}{\theta} \| \bar{X}(t) \|_{L^4(\Lambda)}^4 + \frac{4}{\theta} \| \widetilde{X}(t) \|_{L^4(\Lambda)}^4 + 2 C_q + 2 C_q \| \bar{X}(t) \|_{L^q(\Lambda)}^q + 2 C_q \| \widetilde{X}(t) \|^2
$$
and we integrate it with respect to $t$ in $[0,T]$
\begin{align*}
\| \widetilde{X}(T) \|^2 + \theta \int_0^T \| \widetilde{X}(t) \|_1^2 \, dt &\leq \| X_0 \|^2 + \frac{6}{\theta} \int_0^T \| \bar{X}(t) \|_{L^4(\Lambda)}^4 \, dt + \frac{4}{\theta} \int_0^T \| \widetilde{X}(t) \|_{L^4(\Lambda)}^4 \, dt \\
&\phantom{\leq}+ 2 C_q T + 2C_q \int_0^T \| \bar{X}(t) \|_{L^q(\Lambda)}^q \, dt + 2 C_q \int_0^T \| \widetilde{X}(t) \|^2 \, dt
\end{align*}
(note that the corresponding estimate in the proof of Theorem 14.2.4 in \cite{DPZ96} is different). Now we apply expectation and obtain the following bound
\begin{align}
\E \int_0^T \| \widetilde{X}(t) \|_1^2 \, dt &\leq \frac{\| X_0 \|^2}{\theta} + \frac{6}{\theta^2} \E \int_0^T \| \bar{X}(t) \|_{L^4(\Lambda)}^4 \, dt + \frac{4}{\theta^2} \E \int_0^T \| \widetilde{X}(t) \|_{L^4(\Lambda)}^4 \, dt \notag \\
&\phantom{\leq}+ C + C \E \int_0^T \| \bar{X}(t) \|_{L^q(\Lambda)}^q \, dt + C \E \int_0^T \| \widetilde{X}(t) \|^2 \, dt \label{eq:newbound}
\end{align}
for some constant $C > 0$. We show that the right hand side of \eqref{eq:newbound} is finite. Recall that $\widetilde{X}(t) = {X}(t) - \bar{X}(t) $. We examine separately $X$ and $\bar X$. The fact that  
\begin{equation} \label{eq:L4-bound}
\E \int_0^T \| {X}(t) \|_{L^4(\Lambda)}^4 \, dt < \infty
\end{equation}
\vskip -2mm 
\noindent
follows by Proposition \ref{prop:DPD} with $k = 4$. It is straightforward to check that 
\begin{equation} \label{eq:Lp-bound}
\E \int_0^T \| \bar{X}(t) \|_{L^q(\Lambda)}^q \, dt = \E \int_0^T \int_{\Lambda} | \bar{X}(t,x) |^q \, dx \, dt < \infty,
\end{equation}
for any $q \geq 2$. For instance, one can argue as in Remark 5.2.10 of \cite{DPZ96} which allows to get $\sup_{t \in [0,T]} \E \| \bar{X}(t, \cdot)  \|_{C(\bar \Lambda)}^q < \infty$.

Using \eqref{eq:L4-bound} and \eqref{eq:Lp-bound} with $q = 4$ provides the finiteness of the first two integrals on the right-hand side in \eqref{eq:newbound}.
Analogously, $\E \int_0^T \| {X}(t) \|^2 \, dt < \infty$ follows by Proposition \ref{prop:DPD} (since $\| {X}(t) \|^2 \leq 1 + \| {X}(t) \|_{L^4(\Lambda)}^4$) and
$\E \int_0^T \| \bar{X}(t) \|^2 \, dt $ $< \infty$, by \eqref{eq:Lp-bound} using $q = 2$. The overall result
$$
\E \int_0^T \| \widetilde{X}(t) \|_1^2 \, dt < \infty
$$
\vskip -2mm 
\noindent
means that $\widetilde{X} \in L^2(\Omega \times [0,T]; H^1)$. The Sobolev embedding $H^1 \subset C(\bar{\Lambda})$ finishes the proof.
\end{proof}

\subsection{Proofs of the main theorems} \label{subsection:mainresults}

In this section we prove our main result, that is the central limit theorem (see Theorem \ref{thm:main}).

We consider the error decomposition \eqref{eq:error}. While the asymptotics of the term $\delta^{-1} (\mathcal I_{\delta})^{-1} \mathcal M_{\delta}$ for $\delta \rightarrow 0$ can be analyzed as in \cite{ACP}, we use different techniques to tackle the term $\delta^{-1} (\mathcal I_{\delta})^{-1} \mathcal R_{\delta}$. However, we start similarly. 

From Proposition \ref{prop:asymptoticsbarI}(i)-(iii), it follows that $\mathcal I_{\delta} = O_{\P}(\delta^{-2})$. Therefore, to obtain $\delta^{-1} (\mathcal I_{\delta})^{-1} \mathcal R_{\delta} \rightarrow 0$, it is sufficient to show that
$$
\delta \| K \|_{L^2(\R)}^2 \mathcal R_{\delta} \stackrel{\P}{\rightarrow} 0, \quad \delta \rightarrow 0.
$$
To ease the notation, we omit the $x_0$ in the lower index and, by integration by parts, we write
\begin{align*}
\| K \|_{L^2(\R)}^2 \mathcal R_{\delta} &= \frac{1}{2} \int_0^T X_{\delta}^{\Delta}(t) \left\langle X^2(t), \partial_x K_{\delta} \right\rangle \, dt + \int_0^T X_{\delta}^{\Delta}(t) \left\langle F(X(t)), K_{\delta} \right\rangle \, dt \\
&= \frac{1}{2} (U_{1, \delta} + U_{2, \delta} + U_{3, \delta}) + U_{4, \delta}, \quad \text{where} \\
U_{1, \delta} &:= \int_0^T \bar{X}_{\delta}^{\Delta}(t) \left\langle \bar{X}^2(t), \partial_x K_{\delta} \right\rangle \, dt, \\
U_{2, \delta} &:= \int_0^T \widetilde{X}_{\delta}^{\Delta}(t) \left\langle X^2(t), \partial_x K_{\delta} \right\rangle \, dt + \int_0^T \bar{X}_{\delta}^{\Delta}(t) \left\langle \widetilde{X}^2(t), \partial_x K_{\delta} \right\rangle \, dt \\
&\phantom{:=}+ 2 \int_0^T \bar{X}_{\delta}^{\Delta}(t) \left\langle \bar{X}(t) (\widetilde{X}(t) - \widetilde{X}(t, x_0)), \partial_x K_{\delta} \right\rangle \, dt =: V_{1, \delta} + V_{2, \delta} + V_{3, \delta}, \\
U_{3, \delta} &:= 2 \int_0^T \widetilde{X}(t, x_0) \bar{X}_{\delta}^{\Delta}(t) \left\langle \bar{X}(t), \partial_x K_{\delta} \right\rangle \, dt, \\
U_{4, \delta} &:= \int_0^T X_{\delta}^{\Delta}(t) \left\langle F(X(t)), K_{\delta} \right\rangle \, dt.
\end{align*}
\vskip -2mm 
\noindent
In the next, we will show that $\delta U_{1, \delta} \stackrel{\P}{\rightarrow} 0$ using the Wick's theorem and $\delta V_{j, \delta} \stackrel{\P}{\rightarrow} 0$, for $j = 1, 2, 3$, using the excess spatial regularity of $\widetilde{X}$ over $\bar{X}$. The convergence $\delta U_{3, \delta} \stackrel{\P}{\rightarrow} 0$ will be established by a~specific representation of $\widetilde{X}(t, x_0)$. For the convergence $\delta U_{4, \delta} \stackrel{\P}{\rightarrow} 0$, we will use the assumption (c) in \eqref{bb}. The upper bounds for relevant terms are established in Lemma \ref{lemma:upperbounds} in Appendix. With that we assert the needed convergences.

\begin{lemma} \label{lemma:U1}
As $\delta \rightarrow 0$, we have that $\delta U_{1, \delta} \stackrel{\P}{\rightarrow} 0$.
\end{lemma}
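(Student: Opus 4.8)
\textbf{Proof proposal for Lemma \ref{lemma:U1}.}

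The plan is to show that $U_{1,\delta}$ is $O_\P(1)$ (in fact, that its second moment is bounded uniformly in $\delta$), so that $\delta U_{1,\delta} \to 0$ in probability. The term $U_{1,\delta}$ is purely Gaussian: both $\bar X_\delta^\Delta(t) = \langle \bar X(t), \Delta K_\delta\rangle$ and the linear functionals $\langle \bar X^2(t), \partial_x K_\delta\rangle$ are built from the centered Gaussian field $\bar X$. First I would write $U_{1,\delta}$ as a space-time integral against the Gaussian field and compute $\E[U_{1,\delta}^2]$ by expanding into a sextuple product of Gaussian evaluations of $\bar X$. Applying Wick's theorem (Isserlis' formula), this expectation becomes a sum over pairings of a product of covariances of $\bar X$, i.e.\ of the kernel $q_t(x,y) := \E[\bar X(t,x)\bar X(t,y)]$ together with mixed-time covariances $\E[\bar X(s,x)\bar X(t,y)]$. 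I would then use the explicit heat-kernel representation of these covariances and the scaling of $K_\delta$, $K_\delta(x) = \delta^{-1/2}K(\delta^{-1}(x-x_0))$, $\Delta K_\delta$ scaling like $\delta^{-2-1/2}$, $\partial_x K_\delta$ like $\delta^{-1-1/2}$, to track the powers of $\delta$.

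The key mechanism is the change of variables $x = x_0 + \delta u$ localizing all integrals on the support of $K$, combined with the diffusive rescaling of the heat kernel: $q_t(x_0+\delta u, x_0 + \delta v)$ and the mixed covariances, after rescaling time by $\delta^2$, behave like the covariance of the stochastic convolution associated to the rescaled Laplacian $\Delta_{\delta,x_0}$ on $\R$; this is exactly the content of the scaling lemmas imported from \cite{ACP} (Section \ref{subsection:resultsACP}). I would invoke those scaling results to see that, after accounting for the $\delta^{-1/2}$ normalizations in each of the three factors, the $\delta$-powers from $\Delta K_\delta$, $\partial_x K_\delta$ and the heat-kernel rescaling cancel in each Wick pairing, leaving $\E[U_{1,\delta}^2] \lesssim 1$ uniformly in $\delta \in (0,1)$. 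Hence $\delta U_{1,\delta} = O_\P(\delta) \stackrel{\P}{\to} 0$ as $\delta \to 0$.

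The main obstacle is bookkeeping: there are several distinct Wick pairings (pairings that match the two $\Delta K_\delta$-factors from $\bar X_\delta^\Delta(s)\bar X_\delta^\Delta(t)$ with the four $\partial_x K_\delta$-arguments coming from $\bar X^2(s)$, $\bar X^2(t)$), and one must verify that \emph{each} pairing, not merely the ``diagonal'' one, produces a bounded contribution — in particular that the time integrals $\int_0^T\!\int_0^T$ against the singular (in $|t-s|$) mixed covariances remain integrable. For this I would rely on the upper bounds for $\bar X_\delta^\Delta$ and the relevant Gaussian covariance estimates already packaged in Lemma \ref{lemma:upperbounds} and the auxiliary results of \cite{ACP}, reducing the computation to checking that a handful of explicit kernels are in $L^2$ after the diffusive rescaling. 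A secondary point to handle carefully is that $\langle \bar X^2(t), \partial_x K_\delta\rangle = -\langle \bar X^2(t), (\partial_x K)_\delta\rangle\cdot$(scaling) $= 2\langle \bar X(t)\partial_x\bar X(t), K_\delta\rangle$ after integration by parts using the Dirichlet boundary conditions on $\bar X$ and the compact support of $K_\delta$ inside $\Lambda$; I would use whichever form makes the Wick expansion cleanest, most likely keeping the $\langle \bar X^2(t), \partial_x K_\delta\rangle$ form and differentiating the covariance kernel.
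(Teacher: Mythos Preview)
Your approach is essentially the paper's: expand the purely Gaussian term $U_{1,\delta}$ via Wick's formula and control each pairing with the covariance bounds collected from \cite{ACP} (here Lemmas~\ref{lemma:S5ACP} and~\ref{lemma:S6ACP}), exactly as in Lemma~S.7 of \cite{ACP} specialized to $\gamma=0$.

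One quantitative caveat: your claim that the scaling cancels to give $\E[U_{1,\delta}^2]\lesssim 1$ is likely too optimistic. Already the pairing that produces the factor $\E[\bar X_\delta^\Delta(t)\bar X_\delta^\Delta(t')]$ carries a $\delta^{-1-\eps}|t-t'|^{-1/2+\eps}$ singularity (Lemma~\ref{lemma:S6ACP}(i)), and the bounds in Lemma~\ref{lemma:S5ACP}(iii)--(iv) are stated as $\lesssim\delta^{-1}$ and ``$\delta^2\times(\cdot)\to 0$'' respectively, not as $O(1)$. What the Wick expansion together with these lemmas actually delivers is $\E[(\delta U_{1,\delta})^2]\to 0$, i.e.\ $L^2(\Omega)$-convergence of $\delta U_{1,\delta}$ to zero, which of course suffices. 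So the mechanism and bookkeeping you describe are right; just don't expect each pairing to be uniformly bounded in $\delta$---you only need (and only get) $o(\delta^{-2})$.
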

\begin{proof}
The processes $\bar{X}$ and $\bar{X}_{\delta}^{\Delta}$ are centered Gaussian processes, so we may use the Wick's formula (c.f. Theorem 1.28 in \cite{J}) and repeat the proof as for Lemma S.7 in \cite{ACP} with $\gamma = 0$ using the adjusted Lemmas \ref{lemma:S5ACP} and \ref{lemma:S6ACP}.
\end{proof}

\pagebreak
\begin{lemma} \label{lemma:U2}
As $\delta \rightarrow 0$, we have that $\delta U_{2, \delta} \stackrel{\P}{\rightarrow} 0$.
\end{lemma}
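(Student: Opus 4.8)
The plan is to treat the three pieces $V_{1,\delta}$, $V_{2,\delta}$, $V_{3,\delta}$ separately, in each case exploiting the excess spatial regularity of $\widetilde{X}$ over $\bar{X}$ together with the scaling estimates for $K_{\delta}$. The guiding principle is that every term contains at least one factor of $\widetilde{X}_{\delta}^{\Delta}$ or of $\widetilde{X}$ (or the increment $\widetilde{X}(t)-\widetilde{X}(t,x_0)$), and since $\widetilde X \in C((0,T];H^s)$ for $s$ up to $3/2$ with the bound $\|\widetilde X(t)\|_s \le C_{\theta,s,\eps,X_0}\,g_s(t)$ from Proposition \ref{prop:regulXtilde}, one gains a power of $\delta$ that beats the $\delta^{-1}$ in front. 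I would phrase all bounds pathwise (fix $\omega$ in a full-measure event) and then note that the resulting random constants are finite $\P$-a.s., which gives convergence in probability after multiplying by $\delta$.

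First I would dispose of $V_{1,\delta}=\int_0^T \widetilde{X}_{\delta}^{\Delta}(t)\langle X^2(t),\partial_x K_{\delta}\rangle\,dt$. By Lemma \ref{lemma:upperbounds}(i) (invoked in the discussion preceding Assumption (L)), $|\widetilde{X}_{\delta}^{\Delta}(t)| \lesssim g_{3/2-\eps}(t)\,\delta^{-1/2-\eps}$; and $|\langle X^2(t),\partial_x K_{\delta}\rangle| = |\langle X^2(t), (K')_{\delta}\rangle\,\delta^{-1}|$, which by Cauchy--Schwarz is $\lesssim \delta^{-1}\,\delta^{-1/2}\|K'\|_{L^2(\R)}\,\|X^2(t)\|_{L^2(\mathrm{supp})} \lesssim \delta^{-3/2}\|X(t)\|_{\infty}^2$ using compact support of $K$, or more sharply $\lesssim \delta^{-1/2}\|X^2(t)\|$ if one does not differentiate the kernel but instead uses $\|X^2(t)\|_{H^s}$ with $s>0$ (Proposition \ref{prop:algebra2}-type bound) to absorb one derivative. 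Combined this gives $\delta\,|V_{1,\delta}| \lesssim \delta^{1-1/2-\eps-1/2}\,(\cdot) = \delta^{-\eps}(\cdot)$ — which is the wrong sign, so the key point is that I must \emph{not} lose a full $\delta^{-1}$ on the kernel: one integrates by parts moving the $\partial_x$ off $K_\delta$ onto $X^2(t)$ (legitimate since $X^2(t)\in H^s$, $s\in[0,1/2)$, and $X(t)$ vanishes on $\partial\Lambda$), so that $\langle X^2(t),\partial_x K_\delta\rangle = -\langle \partial_x(X^2(t)), K_\delta\rangle$ is only $O(\delta^{-1/2})$ after using $\|\partial_x(X^2(t))\|$ (finite a.s. since $X(t)\in H^s$ with $s$ close to $1/2$ and $\widetilde X$ even has $H^1$-norm in $L^2$, while $\bar X$ contributes through $\bar X\partial_x\bar X$ which one keeps grouped). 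Actually the cleanest route: write $\langle X^2(t),\partial_x K_\delta\rangle = \delta^{-1}\langle X^2(t),(K')_{\delta,x_0}\rangle$ and use $|\langle \phi, (K')_{\delta,x_0}\rangle| \le \|\phi\|_\infty\,\delta^{1/2}\|K'\|_{L^1(\R)}$ to get the bound $\lesssim \delta^{-1/2}\|X(t)\|_\infty^2$, finite a.s.; then $\delta|V_{1,\delta}| \lesssim \delta^{1-1/2-\eps-1/2}\int_0^T g_{3/2-\eps}(t)\|X(t)\|_\infty^2\,dt = \delta^{-\eps}\cdot(\text{a.s. finite})$ — still not enough. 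So the genuinely necessary refinement is to use the sharper pointwise bound on $\widetilde X_\delta^\Delta$ together with a sharper kernel bound that together give a strictly positive power of $\delta$; I expect this bookkeeping, matching exactly what Lemma \ref{lemma:upperbounds} is designed to deliver, to be where one must be careful.

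For $V_{2,\delta}=\int_0^T \bar{X}_{\delta}^{\Delta}(t)\langle \widetilde{X}^2(t),\partial_x K_{\delta}\rangle\,dt$, the two $\widetilde X$ factors give plenty of room: $\|\widetilde X^2(t)\|_{H^s} \lesssim \|\widetilde X(t)\|_\infty\|\widetilde X(t)\|_s$ with $\|\widetilde X(t)\|_s \le C g_s(t)$ for $s$ up to $3/2$, so $\langle \widetilde X^2(t),\partial_x K_\delta\rangle$ can be estimated by integrating by parts and using $\|(-\Delta)^{s/2}\widetilde X^2(t)\|$ against $\|(-\Delta)^{(1-s)/2}K_\delta\| \lesssim \delta^{-1/2-(1-s)}$; meanwhile $|\bar X_\delta^\Delta(t)| \lesssim \delta^{-1/2}\,\eta(t)$ for a suitable a.s.-finite process $\eta$ (this is the Gaussian scaling, Proposition \ref{prop:asymptoticsbarI}-type / Lemma \ref{lemma:S5ACP}). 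Choosing $s$ close enough to $3/2$ makes the total power of $\delta$ strictly larger than $-1$, so $\delta V_{2,\delta}\to 0$. For $V_{3,\delta}=2\int_0^T \bar{X}_{\delta}^{\Delta}(t)\langle \bar{X}(t)(\widetilde{X}(t)-\widetilde{X}(t,x_0)),\partial_x K_{\delta}\rangle\,dt$, the essential observation is that on $\mathrm{supp}\,K_{\delta,x_0}$, which shrinks to $\{x_0\}$ at rate $\delta$, the increment $|\widetilde X(t,x)-\widetilde X(t,x_0)| \lesssim \|\widetilde X(t)\|_{C^{0,\alpha}}\,|x-x_0|^\alpha \lesssim \|\widetilde X(t)\|_{H^{1/2+\alpha'}}\,\delta^\alpha$ (Sobolev embedding $H^{1/2+\alpha'}\hookrightarrow C^{0,\alpha}$, valid since $\widetilde X(t)\in H^s$ for $s<3/2$), so this extra factor $\delta^\alpha$ together with $|\bar X_\delta^\Delta(t)|\lesssim\delta^{-1/2}$, $\|\bar X(t)\|_\infty$ finite, and $\|\partial_x K_\delta\|_{L^1}\lesssim\delta^{-1/2}$ yields $\delta V_{3,\delta} \lesssim \delta^{1-1/2-1/2+\alpha}\cdot(\text{a.s. finite}) = \delta^\alpha\cdot(\text{a.s. finite})\to 0$. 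The main obstacle throughout is the precise tracking of $\delta$-powers in $V_{1,\delta}$ — making sure the single $\widetilde X_\delta^\Delta$ factor, combined with the kernel scaling and the $L^2([0,T])$-membership of $g_{3/2-\eps}$, actually produces a positive net power — and I would organize that computation so that it reduces cleanly to the estimates already collected in Lemma \ref{lemma:upperbounds}.
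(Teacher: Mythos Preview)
Your treatment of $V_{3,\delta}$ is essentially the paper's argument: H\"older continuity of $\widetilde X(t,\cdot)$ from the embedding $H^{3/2-\eps}\hookrightarrow C^{0,1-\eps}$, combined with localisation of $K_\delta$ around $x_0$, produces an extra factor $\delta^{1-\eps}$ that kills everything. The only correction needed there is the pathwise size of $\bar X_\delta^\Delta(t)$: it is $\lesssim \delta^{-1-\eps}$, not $\delta^{-1/2}$ (recall $\E|\bar X_\delta^\Delta(t)|^2\sim\delta^{-2}$, and Lemma~\ref{lemma:upperbounds}(ii) gives the pathwise bound $\delta^{-1-\eps}$). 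With the correct exponent your bookkeeping becomes $\delta\cdot\delta^{-1-\eps}\cdot\delta^{-1/2}\cdot\delta^{1-\eps}=\delta^{1/2-2\eps}$, which still goes to zero.

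The real gap is $V_{1,\delta}$, where you explicitly get stuck at $\delta^{-\eps}$. The missing ingredient is a sharper bound on $\langle X^2(t),\partial_x K_\delta\rangle$: using only $\|X(t)\|_\infty$ and $\|K'\|_{L^1}$ you lose half a power of $\delta$. What you need is the fractional regularity $X^2\in C([0,T];W^{1/2-\eps,p})$ for large $p$ (this holds because $W^{s,p}(\Lambda)$ is an algebra when $sp>1$, and both $\bar X$ and $\widetilde X$ lie in $W^{1/2-\eps,p}\cap L^\infty$; cf.\ Proposition~\ref{prop:regulXbar}, Proposition~\ref{prop:regulXtilde} and the proof of Lemma~\ref{lemma:upperbounds}). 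Then Lemma~\ref{lemma:19ACP} with $r=1/2-\eps$ and $p$ large gives
\[
|\langle X^2(t),\partial_x K_\delta\rangle|=\delta^{-1}|\langle X^2(t),(K')_\delta\rangle|\lesssim \delta^{-1}\,\delta^{\,r+1/2-1/p}\,\|X^2(t)\|_{r,p}\lesssim \delta^{-\eps'},
\]
i.e.\ the pairing is essentially $O(1)$ rather than $O(\delta^{-1/2})$. Combined with $|\widetilde X_\delta^\Delta(t)|\lesssim g_{3/2-\eps}(t)\,\delta^{-1/2-\eps}$ this yields $V_{1,\delta}=O_\P(\delta^{-1/2-\eps})$ and hence $\delta V_{1,\delta}\to 0$. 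This is precisely the content of Lemma~\ref{lemma:upperbounds}(i)--(ii), which the paper simply invokes to dispatch both $V_{1,\delta}$ and $V_{2,\delta}$ in one line; your $V_{2,\delta}$ argument also goes through once you replace the incorrect $|\bar X_\delta^\Delta|\lesssim\delta^{-1/2}$ by $\delta^{-1-\eps}$ and use $|\langle\widetilde X^2(t),\partial_x K_\delta\rangle|\lesssim g_{3/2-\eps}(t)\,\delta^{1/2-\eps}$ from the same lemma.
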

\begin{proof} According to Remark \ref{altri} we change the argument of the proof of Lemma S.4 (see the supplement of \cite{ACP}). 

Lemma \ref{lemma:upperbounds}(i)-(ii) yields $V_{1, \delta} = O_{\P}(\delta^{-1/2 - \eps})$, $V_{2, \delta} = O_{\P}(\delta^{-1/2 - \eps})$ for any small $\eps > 0$. As for the term $V_{3, \delta}$, we show the desired convergence in the $\P$-a.s. sense.

Fix $\omega \in \Omega$, $\P$-a.s. By Proposition \ref{prop:regulXtilde}, see in particular \eqref{continua}, we can consider that $\widetilde{X} \in C((0,T]; H^{3/2 - \eps})$, $\P$-a.s., for any \hbox{small} $\eps \in (0,1/2)$. However, the space $H^{3/2 - \eps}$ continuously embedds into $H^{3/2 - \eps} \subset C^{0, 1 - \eps}(\Lambda) = C^{1 - \eps}(\Lambda)$. 
  
By \eqref{continua} there exists a non-negative Borel function $g_{3/2 - \eps}(t) \in L^2([0,T]) $ and $C(\omega)>0$ such that
\begin{equation} \label{eq:holderXtilde} 
|\widetilde{X}(t, x, \omega) - \widetilde{X}(t, y, \omega)| \leq C(\omega) \, g_{3/2 - \eps} (t) \, |x - y|^{1 - \eps},
\end{equation}  
for any $x, y \in \Lambda$, $t \in (0,T]$. Since $\left| \bar{X}_{\delta}^{\Delta}(t) \right| \lesssim \delta^{-1 - \eps}$, $\P$-a.s., uniformly in $t \in [0,T]$, by Lemma \ref{lemma:upperbounds}, we start with the following upper bound:
\begin{align}
\delta |V_{3, \delta}(\omega)| &\leq 2 \delta \int_0^T \left| \bar{X}_{\delta}^{\Delta}(t, \omega) \right| \left| \left\langle \bar{X}(t, \omega) (\widetilde{X}(t, \omega) - \widetilde{X}(t, x_0, \omega)), \partial_x K_{\delta} \right\rangle \right| \, dt \notag \\
&\lesssim \delta^{- \eps} \int_0^T \left| \int_{\Lambda} \bar{X}(t, x, \omega) (\widetilde{X}(t, x, \omega) - \widetilde{X}(t, x_0, \omega)) \partial_x K_{\delta}(x) \, dx \right| \, dt \notag \\
&\lesssim \delta^{- \eps} \int_0^T \left| \int_{\Lambda_{\delta, x_0}} \bar{X}(t, \delta y + x_0, \omega) (\widetilde{X}(t, \delta y + x_0, \omega) - \widetilde{X}(t, x_0, \omega)) \frac{K'(y) \delta}{\delta^{1/2} \delta} \, dy \right| \, dt \notag \\
&\leq \delta^{-1/2 - \eps} \int_0^T \int_{\Lambda_{\delta, x_0}} \left| \bar{X}(t, \delta y + x_0, \omega) \right| \left| \widetilde{X}(t, \delta y + x_0, \omega) - \widetilde{X}(t, x_0, \omega) \right| |K'(y)| \, dy \, dt \notag \\
&\lesssim \delta^{1/2 - \eps - \eps}  \int_0^T  g_{3/2 - \eps} (t) \, \int_{\Lambda_{\delta, x_0}} |y|^{1 - \eps} |K'(y)| \, dy \, dt, \label{eq:V3}
\end{align}
where we used \eqref{eq:holderXtilde} and the fact that $ \left| \bar{X}(t, \delta y + x_0, \omega) \right| \rightarrow \left| \bar{X}(t, x_0, \omega) \right|$ as $\delta \rightarrow 0$ by continuity, u\-ni\-form\-ly in $t \in [0,T]$ (cf. Proposition \ref{prop:regulXbar}); therefore this term is bounded by some positive random constant.

Since the kernel $K$ has compact support, the right-hand side of \eqref{eq:V3} is finite and tends to zero as $\delta \rightarrow 0$ for $\omega \in \Omega$, $\P$-a.s., which finishes the proof.
\end{proof}

Now we consider the most difficult term:
$$
U_{3, \delta} = 2 \int_0^T \widetilde{X}(t, x_0) \bar{X}_{\delta}^{\Delta}(t) \left\langle \bar{X}(t), \partial_x K_{\delta} \right\rangle \, dt.
$$

\begin{lemma} \label{lemma:U3}
As $\delta \rightarrow 0$, we have that $\delta U_{3, \delta} \stackrel{\P}{\rightarrow} 0$.
\end{lemma}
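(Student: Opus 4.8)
\textbf{Proof proposal for Lemma \ref{lemma:U3}.}

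The plan is to control $\delta U_{3,\delta}$ by combining the excess regularity of $\widetilde{X}$ (Proposition \ref{prop:regulXtilde}) with a sharp estimate for the \emph{centered} Gaussian factor $\bar{X}_\delta^\Delta(t)\langle\bar{X}(t),\partial_x K_\delta\rangle$. The point is that $\widetilde{X}(t,x_0)$ is, for each fixed $t$, a genuinely non-Gaussian random variable of low regularity, so one cannot simply pull it out of a Gaussian computation; instead I would first condition on, or freeze, the trajectory of $\widetilde{X}$ and treat $t\mapsto\widetilde{X}(t,x_0)$ as a bounded (random) factor, using that by \eqref{continua} with $s=3/2-\eps$ and the embedding $H^{3/2-\eps}\subset C^{1-\eps}(\Lambda)$ we have the pointwise bound $|\widetilde{X}(t,x_0)|\lesssim g_{3/2-\eps}(t)$, $\P$-a.s., with $g_{3/2-\eps}\in L^2([0,T])$. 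Then it remains to estimate the $L^1([0,T])$ (in $t$) size of $\delta\,\bar{X}_\delta^\Delta(t)\langle\bar{X}(t),\partial_x K_\delta\rangle$, weighted by $g_{3/2-\eps}(t)$.

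The key steps, in order, are as follows. First, I would write $\delta U_{3,\delta}$ as $2\int_0^T \widetilde{X}(t,x_0)\,\delta\,\bar{X}_\delta^\Delta(t)\langle\bar{X}(t),\partial_x K_\delta\rangle\,dt$ and, using $|\widetilde{X}(t,x_0)|\lesssim g_{3/2-\eps}(t)$ and Cauchy--Schwarz in $t$, reduce matters to showing that $\big(\int_0^T \delta^2 (\bar{X}_\delta^\Delta(t))^2\langle\bar{X}(t),\partial_x K_\delta\rangle^2\,dt\big)^{1/2}\stackrel{\P}{\to}0$, or more simply that $\E\!\int_0^T \delta^2(\bar{X}_\delta^\Delta(t))^2\langle\bar{X}(t),\partial_x K_\delta\rangle^2\,dt\to 0$. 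Second, I would invoke the scaling lemmas for the stochastic convolution and the Gaussian moment bounds from the Appendix (the adjusted analogues of Lemmas \ref{lemma:S5ACP}, \ref{lemma:S6ACP}; cf. Lemma \ref{lemma:upperbounds}) to compute this expectation: $\bar{X}_\delta^\Delta(t)$ scales like $\delta^{-1}$ times a bounded-variance Gaussian and $\langle\bar{X}(t),\partial_x K_\delta\rangle=\langle\bar{X}(t),\delta^{-1}(K')_{\delta,x_0}\rangle\cdot\delta$ scales — crucially — with an extra $\delta^{1/2}$ coming from $\|\bar X(t)\|$ being only in $C(\bar\Lambda)$ against the localized kernel of mass $O(1)$, so that the product carries a net positive power of $\delta$. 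Third, I would use Wick's formula (Theorem 1.28 in \cite{J}) to reduce $\E[(\bar{X}_\delta^\Delta(t))^2\langle\bar{X}(t),\partial_x K_\delta\rangle^2]$ to a sum of products of covariances, bound each covariance kernel via the heat-kernel estimates already used for $U_{1,\delta}$ in Lemma \ref{lemma:U1}, and integrate in $t$ over $[0,T]$, obtaining a bound of the form $C\delta^{2}\cdot\delta^{-2}\cdot\delta^{\kappa}=C\delta^{\kappa}$ for some $\kappa>0$; passing $\delta\to 0$ then gives the claim.

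The main obstacle I anticipate is justifying the separation of the non-Gaussian factor $\widetilde{X}(t,x_0)$ from the Gaussian factor in a way that keeps the estimate sharp. A crude bound $|\widetilde{X}(t,x_0)|\le\|\widetilde{X}(t)\|_\infty$ followed by the deterministic estimate $\|\widetilde{X}(t)\|_\infty\lesssim g_{3/2-\eps}(t)$ decouples the two but loses any independence structure, so one must be sure that the surviving Gaussian expectation still produces a \emph{strictly positive} power of $\delta$ after the $\delta^2$ prefactor cancels the $\delta^{-2}$ from $(\bar X_\delta^\Delta)^2$; this is exactly where the $H^{3/2-\eps}$-regularity of $\widetilde X$ (and the resulting integrability of $g_{3/2-\eps}$) and the extra $\delta^{1/2}$ in $\langle\bar X(t),\partial_x K_\delta\rangle$ must be used in combination. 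Alternatively — and this is the route the paper seems to favor, given the discussion before the lemma — one builds an explicit expansion $\widetilde{X}(t,x_0)=\sum_i b_i(t)\nu_i$ in a separable $L^2(\Omega,\mathcal F',\P)$ with orthonormal basis $(\nu_i)$ adapted to the Gaussian structure of $\bar X$, and then the cross-terms in $\E[\widetilde{X}(t,x_0)\,\bar X_\delta^\Delta(t)\langle\bar X(t),\partial_x K_\delta\rangle]$ can be evaluated mode by mode; the delicate part there is the construction of that basis and the proof that $\widetilde X(t,x_0)$ is measurable with respect to the chosen sub-$\sigma$-field, which is precisely the subtlety flagged in Remark \ref{altri}. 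Once either device is in place, the remaining computation is a routine scaling-and-Wick estimate of the type already carried out for $U_{1,\delta}$.
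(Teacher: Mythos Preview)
Your ``crude bound'' route has a genuine gap: the extra $\delta^{1/2}$ you claim for $\langle\bar X(t),\partial_x K_\delta\rangle$ does not exist. Writing $\partial_x K_\delta=\delta^{-1}(K')_\delta$ and using the scaling (Lemma~\ref{lemma:22ACP} or a direct computation with the heat kernel), one finds
\[
\delta^{2}\,\E\big[(\bar X_\delta^\Delta(t))^{2}\big]\ \xrightarrow[\delta\to 0]{}\ \tfrac{1}{2\theta}\|K'\|_{L^2(\R)}^{2}>0,
\qquad
\E\big[\langle\bar X(t),\partial_x K_\delta\rangle^{2}\big]\ \xrightarrow[\delta\to 0]{}\ \tfrac{1}{2\theta}\|K\|_{L^2(\R)}^{2}>0,
\]
so by Wick's formula the diagonal fourth moment satisfies
\[
\delta^{2}\,\E\Big[(\bar X_\delta^\Delta(t))^{2}\langle\bar X(t),\partial_x K_\delta\rangle^{2}\Big]
\ \xrightarrow[\delta\to 0]{}\ \tfrac{1}{4\theta^{2}}\|K'\|_{L^2(\R)}^{2}\|K\|_{L^2(\R)}^{2}>0.
\]
Hence $\E\int_0^T \delta^{2}(\bar X_\delta^\Delta(t))^{2}\langle\bar X(t),\partial_x K_\delta\rangle^{2}\,dt$ converges to a strictly positive constant, and the reduction via Cauchy--Schwarz in $t$ (which collapses everything to the diagonal $t'=t$) cannot conclude. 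The bound $|\widetilde X(t,x_0)|\lesssim g_{3/2-\eps}(t)$ is correct but irrelevant here: the failure is that the \emph{Gaussian} factor alone is only $O_{\P}(\delta^{-1})$, not $o_{\P}(\delta^{-1})$.

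What is actually needed --- and what the paper does --- is precisely the ``alternative'' you sketch at the end. The expansion $\widetilde X(t,x_0)=\sum_i b_i(t)\nu_i$ with deterministic $b_i\in L^2([0,T])$ and $(\nu_i)$ an orthonormal basis of a separable $L^2(\Omega,\mathcal F',\P)$ is not a stylistic choice but the mechanism that recovers the missing power of $\delta$: once $\widetilde X(t,x_0)$ is replaced by a finite sum $\sum_{i\le N}b_i(t)\nu_i$, one computes $\E s_{i,\delta}^2$ as a \emph{double} time integral $\int_0^T\!\int_0^t b_i(t)b_i(t')\,\E[\cdots]\,dt'dt$, and Wick's formula produces cross--covariances like $\E[\bar X_\delta^\Delta(t)\bar X_\delta^\Delta(t')]$ with $t'<t$. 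By Lemma~\ref{lemma:S6ACP}(i) this off--diagonal covariance is of order $\delta^{-1-\eps}|t-t'|^{-1/2+\eps}$, a full factor $\delta^{1-\eps}$ better than the diagonal variance $\delta^{-2}$; integrating against $b_i(t)b_i(t')$ then gives $\delta^2\E s_{i,\delta}^2\to 0$. The tail $\sum_{i>N}$ is controlled only by $\eta$, uniformly in $\delta$ (again no decay in $\delta$ is available there), and one concludes $\delta U_{3,\delta}\to 0$ in $L^1(\Omega)$ by letting $\eta\to 0$ after $\delta\to 0$. The substantive work you should therefore plan for is Steps~1--5 of the paper: build the countably generated $\sigma$-field $\mathcal F'$ from the i.i.d.\ Gaussians $\xi_{k,n}=\int_0^T\varphi_k\,d\beta_n$, show $\bar X(t)$ and then $\widetilde X(t)$ are $\mathcal F'$-measurable (the latter via the Picard iterates of the truncated equations in Proposition~\ref{gatarek}), and invoke Proposition~\ref{prop:L2 integrability} to place $\widetilde X(\cdot,x_0)$ in $L^2(\Omega\times[0,T])$ so that the expansion \eqref{eq:repre} is available.
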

\begin{proof} We cannot adapt the method used in the proof of Lemma S.8 (see supplement of \cite{ACP}) which uses Malliavin calculus. Indeed we do not have a Wiener process $W$ with values in $H = L^2(\Lambda)$.

We argue differently. The main point of the proof will be to represent $\widetilde{X}(t, x_0)$, $x_0 \in \Lambda $, $t\in [0,T]$ a.e., in the form
\begin{equation} \label{eq:repre}
\widetilde{X}(t, x_0) = \sum_{i=1}^{\infty} b_i(t) \nu_i,
\end{equation}
where $b_i \in L^2([0,T])$ are deterministic functions and $(\nu_i)_{i=1}^{\infty}$ form an orthonormal basis in a separable space $L^2(\Omega, \mathcal F', \P)$, with a $\sigma$-field $\mathcal F'\subset \mathcal F$ that will be specified later (the previous series will then converge in $L^2(\Omega, \mathcal F', \P)$).

Then we will show the desired convergence of $\delta U_{3, \delta}$ in $L^1(\Omega)$. To that end, we proceed in several steps. 

\vskip 1mm
{\bf Step 1}. First, recall the formal representation of the cylindrical Wiener process $W(t): W(t) = \sum_{n=1}^{\infty} e_n \beta_n(t)$, where $\beta_n$, $n \geq 1$, are independent \hbox{real} standard Brownian motions and $(e_n)_{n=1}^{\infty}$ is the fixed orthonormal basis of eigenfunctions in $L^2(\Lambda)$ (see Section \ref{subsection:notation}).

We start by considering the regular version of $\bar X$ given by Proposition \ref{prop:regulXbar}. Let us consider 
\begin{equation} \label{frr}
Y_n(t)= \langle \bar X(t), e_n \rangle.
\end{equation}
It is well known that these are one dimensional Ornstein-Uhlenbeck processes:
$$
Y_n(t) = \int_0^t e^{- \vartheta \lambda_n (t-s)} \, d\beta_n(s), \quad t \geq 0, \quad n \geq 1. 
$$
Therefore, we also have, $\P$-a.s, for any $t \geq 0$, $n \geq 1$,  

\begin{equation}\label{stt}
Y_n(t) = -\theta \lambda_n\int_0^t e^{-\theta \lambda_n  (t-s)} \, \beta_n(s) \, ds + \beta_n(t).
\end{equation}

{\bf Step 2}. Let us fix $T > 0$. Example 5.a in \cite{IN} provides the following representation for a real Brownian motion $\beta(t)$:
\begin{equation} \label{eq:itonisio}
\beta(t) = \sum_{k=1}^{\infty} \int_0^t \varphi_k(u) \, du \cdot \int_0^T \varphi_k(u) \, d\beta(u), \quad \P-a.s.,
\end{equation} 
where $(\varphi_k)_{k=1}^{\infty}$ is any orthonormal basis in $L^2([0,T])$. Moreover, Theorem 5.1 in \cite{IN} yields that the infinite series in \eqref{eq:itonisio} converges uniformly in $t \in [0,T]$ to $\beta(t)$, $\P$-a.s. Hence, we have, for any $t \in [0, T]$, $n \geq 1$, $\P$-a.s.,
\begin{align*} 
Y_n(t) &= \sum_{k=1}^{\infty} (-\theta \lambda_n)\int_0^t e^{-\theta \lambda_n  (t-s)}\, \Big(\int_0^s \varphi_k(u) \, du \Big) \, ds \int_0^T \varphi_k(u) \, d\beta_n(u) \\
&\phantom{=}+ \sum_{k=1}^{\infty} \int_0^t \varphi_k(u) \, du \cdot \int_0^T \varphi_k(u) \, d\beta_n(u).
\end{align*}  
\vskip -2mm 
\noindent
Let us define $\xi_{k,n} = \int_0^T \varphi_k(u) \, d\beta_n(u)$, $k,n \geq 1$. Note that $\xi_{k,n}$ are i.i.d. random variables, $\xi_{k,n} \sim N(0,1)$ for every $k,n \geq 1$. Let us define deterministic functions $g_k(t) = \int_0^t \varphi_k(u) \, du$, $t \in [0,T]$. Let $n \geq 1$. We have that
\begin{equation} \label{ma1}
Y_n(t)(\omega) = \sum_{k=1}^{\infty} \Big (-\theta \lambda_n\int_0^t e^{-\theta \lambda_n (t-s)} g_k(s) \, ds + g_k(t) \Big) \xi_{k,n}(\omega).
\end{equation}
It is easy to see that there exists an event $\Omega'$ with $\P(\Omega') = 1$ (independent of $k, n \geq 1$ and $t \in [0,T]$) such that \eqref{ma1} holds for any $\omega \in \Omega'$, $t \in [0,T]$, and the random series converges uniformly on $[0,T]$, $T > 0$. 
 
\vskip 1mm
{\bf Step 3}. We introduce the $\sigma$-field
\begin{equation}\label{separ}
\text{$\mathcal G'$ generated by the random variables $\xi_{k,n}$, $k \geq 1$, $n \geq 1$.}
\end{equation}
Moreover $\mathcal F'$ is obtained completing $\mathcal G'$ by adding all the $\P$-null sets of $(\Omega, \mathcal F, \P)$.

It turns out that by \eqref{ma1}, for any $t \in [0,T]$, $n \geq 1$, 
$$ 
\text{$Y_n(t) = \langle \bar{X}(t), e_n \rangle$ is $\mathcal F'$-measurable.}
$$
Applying Proposition 1.1 in \cite{DPZ} we obtain that, for any $t \in [0,T]$, 
\begin{equation}\label{sdf}
\text{$\bar{X}(t)$ is $\mathcal F'$-measurable with values in $H$.}
\end{equation}

{\bf Step 4}. Using the approach of \cite{gatarekDap} as explained in the proof of Proposition  \ref{gatarek} one can show that, for any $t \in [0,T]$,
\begin{equation}\label{sff}
\text{$\widetilde{X}(t)$ is ${\mathcal F}'$-measurable.}
\end{equation}
To clarify \eqref{sff} consider first the equation \eqref{g12}. Since we can solve it by the contraction principle on a~finite number of ``small'' deterministic time intervals $I_{R,k} \subset [0,T]$, $k = 1, \ldots, N$, we obtain, $\P$-a.s., the solution $X_n$ as limit in $C(I_{R,k}; H)$, $k = 1, \ldots, N$, of approximating processes which at any fixed time are $\mathcal F'$-measurable random variables.

For instance, let us fix $n \geq 1$ and consider the first time interval $[0,R] = I_{R,1}$. The solution $X_n(t)$ can be obtained as limit, $\P$-a.s., in $C(I_{R,1}; H)$ of processes $Z_k(t)$ obtained by the usual iteration scheme (we omit the dependence on $n$ to simplify the notation): 
\begin{align*}
Z_0(t) &= 0, \quad t \in [0,R], \\
Z_{k+1}(t) &= {S_{\theta}(t) X_0 + \frac{1}{2} \int_0^t S_{\theta}(t-s) \partial_x \left( [\pi_n ( Z_{k}(s)) ]^2 \right) \, ds} + \int_0^t S_{\theta}(t-s) F(\pi_n (Z_k(s))) \, ds + \bar{X}(t), \quad t \in [0,R],
\end{align*}
$k \geq 0$. Arguing by induction, it is not difficult to prove that each $Z_k(t)$ is $\mathcal F'$-measurable. Consequently, $X_n(t)$ is $\mathcal F'$-measurable for any $t \in [0,R].$ We can argue similarly on the other intervals $I_{R,k}$.

It follows that, for any $n \geq 1$, $t \in [0,T]$, $X_{n}(t)$ is $\mathcal F'$-measurable. We also note that the solution $X$ to the generalized stochastic Burgers equation verifies, $\P$-a.s., for any $t \in [0,T]$, $n \geq 1$,
$$
X(t \wedge \tau_n) =  X_n(t \wedge \tau_n),
$$
where $\tau_n$ is defined by \eqref{tau}. We deduce that $X(t \wedge \tau_n)$ is $\mathcal F'$-measurable, $t \in [0,T]$, $n \geq 1$.

Passing to the limit, $\P$-a.s. as $n \rightarrow \infty$ (since $\tau_n \rightarrow \infty$) we obtain that for each $t \in [0,T]$ the $H$-valued random variable $X(t)$ is $\mathcal F'$-measurable. Assertion \eqref{sff} follows recalling that $\widetilde{X}(t) = {X}(t) - \bar{X}(t)$.

\vskip 1mm
{\bf Step 5}. Recall that Proposition \ref{prop:L2 integrability} provides $\widetilde{X} \in L^2(\Omega \times [0,T]; C(\bar{\Lambda}))$ and therefore, for any $x_0 \in \Lambda$, $\widetilde{X}(x_0) \in L^2(\Omega \times [0,T], \mathcal F' \otimes \mathcal B ([0,T]))$. As the result, for almost all $t$ (with respect to the Lebesgue measure on $[0,T]$) we have
$$
\widetilde{X}(t,x_0) \in L^2(\Omega, \mathcal F', \P).
$$
Recall the $\sigma$-field $\mathcal G'$ in \eqref{separ}. $\mathcal G'$ is countably generated. This means that there exists a countable set $\mathcal E = (E_n)_{n=1}^{\infty} \subset \mathcal G'$ such that $\sigma(\mathcal E) = \mathcal G'$.

By Proposition 3.4.5 in \cite{C}, $L^2(\Omega, \mathcal G', \P)$ is a separable Hilbert space. By standard arguments of measure theory, we deduce that also
$$   
\text{$L^2(\Omega, \mathcal F', \P)$ is a separable Hilbert space.}
$$ 
Hence there exists an orthonormal basis $(\nu_i)_{i=1}^{\infty}$ (different from $(\xi_{k,n})_{k,n=1}^{\infty}$) such that for any $Y \in L^2(\Omega, \mathcal F', \P)$ we have 
\begin{equation} \label{eq:repreY}
Y = \sum_{i=1}^{\infty} \left\langle Y, \nu_i \right\rangle_{L^2(\Omega)} \nu_i.
\end{equation}

Hence, for almost all $t \in [0,T]$, we have the desired representation
\begin{equation} \label{ci1}
\widetilde{X}(t, x_0) = \sum_{i \ge 1} b_i(t) \nu_i,
\end{equation}
with $b_i \in L^2([0,T])$ being deterministic functions.

\vskip 1mm
{\bf Step 6}. We finish the proof of the convergence in $L^1(\Omega)$ of $\delta U_{3, \delta}$ proceeding analogously as in the second part of the proof of Lemma S.8 in supplement of \cite{ACP}. Consider the representation \eqref{ci1} and, for $N \in \N$, set
$$ 
U_{3, \delta, N} := 2 \sum_{i=1}^N \nu_i \underbrace{\int_0^T b_i(t) \bar{X}_{\delta}^{\Delta}(t) \left\langle \bar{X}(t), \partial_x K_{\delta} \right\rangle \, dt}_{=: s_{i, \delta}} = 2 \sum_{i=1}^N \nu_i s_{i, \delta}.
$$

Fix $\eta > 0$ and choose $N = N(\eta) \in \N$ large enough such that
$$
\int_0^T \E \Big( \widetilde{X}(t, x_0) - \sum_{i=1}^N b_i(t) \nu_i \Big)^2 \, dt = \int_0^T \E \Big( \sum_{i=N+1}^{\infty} b_i(t) \nu_i \Big)^2 \, dt \lesssim \int_0^T \sum_{i=N+1}^{\infty} b_i^2(t) \, dt < \eta.
$$
We decompose $\delta \E |U_{3, \delta}| \leq \delta \E |U_{3, \delta} - U_{3, \delta, N}| + \delta \E |U_{3, \delta, N}|$ and we handle both terms separately. By the Cauchy-Schwarz inequality and Gaussianity (recall that if $Z$ is Gaussian, then $\E |Z|^p \lesssim \left( \E Z^2 \right)^{p/2}$), we obtain
\begin{align*}
\delta^2 \left( \E |U_{3, \delta} - U_{3, \delta, N} | \right)^2 &= \delta^2 \left( \E \left| 2 \int_0^T \sum_{i=N+1}^{\infty} \nu_i b_i(t) \bar{X}_{\delta}^{\Delta}(t) \left\langle \bar{X}(t), \partial_x K_{\delta} \right\rangle \, dt \right| \right)^2 \\
&\leq 4 \delta^2 \int_0^T \E \left( \sum_{i=N+1}^{\infty} \nu_i b_i(t) \right)^2 \, dt \cdot \int_0^T \E \left( \bar{X}_{\delta}^{\Delta}(t) \left\langle \bar{X}(t), \partial_x K_{\delta} \right\rangle \right)^2 \, dt \\
&\lesssim \delta^2 \eta \int_0^T \E \left\langle \bar{X}(t), \Delta K_{\delta} \right\rangle^2 \E \left\langle \bar{X}(t), \partial_x K_{\delta} \right\rangle^2 \, dt \\
&= \delta^{-4} \eta \int_0^T \E \left\langle \bar{X}(t), (\Delta_{\delta}K)_{\delta} \right\rangle^2 \E \left\langle \bar{X}(t), (\partial_x K)_{\delta} \right\rangle^2 \, dt \\   
&\lesssim \eta \| (- \Delta_{\delta})^{1/2} K \|_{L^2(\Lambda_{\delta})}^2 \| (- \Delta_{\delta})^{-1/2} \partial_x K \|_{L^2(\Lambda_{\delta})}^2 \lesssim \eta,
\end{align*}
using the lemmas of Section \ref{subsection:resultsACP} for scaling and a uniform boundedness (in $\delta > 0$) of the two terms on the last line. (Here we also use Assumption \ref{ass:L}.) As the result we obtain
\begin{equation} \label{eq:U3remainder}
\sup_{\delta \in (0,1]} \delta \E |U_{3, \delta} - U_{3, \delta, N}| \lesssim \eta^{1/2}.
\end{equation}

Now we focus on $\delta \E |U_{3, \delta, N}|$. By the Cauchy-Schwarz inequality, we have
$$
\delta \E |U_{3, \delta, N}| = 2 \delta \E \Big| \sum_{i=1}^N \nu_i s_{i, \delta} \Big| \leq 2 \delta \Big( \sum_{i=1}^N \E \nu_i^2 \Big)^{1/2} \cdot \Big( \sum_{i=1}^N \E s_{i, \delta}^2 \Big)^{1/2}.
$$ 
The first sum on the right-hand side in the above expression is of order $N^{1/2}$. Since $N$ is fixed (because small $\eta$ is fixed), this factor only contributes to a~constant. Next, we will prove that
\begin{equation} \label{eq:U3mainpart}
\delta^2 \E s_{i, \delta}^2 \rightarrow 0, \quad i \in \N.
\end{equation}
We take advantage of the symmetry in $t$, $t'$ and we write
$$
s_{i, \delta}^2 = 2 \int_0^T \int_0^t b_i(t) b_i(t') \bar{X}_{\delta}^{\Delta}(t) \bar{X}_{\delta}^{\Delta}(t') \left\langle \bar{X}(t), \partial_x K_{\delta} \right\rangle \left\langle \bar{X}(t'), \partial_x K_{\delta} \right\rangle \, dt' \, dt.
$$
To compute the expectation $\E s_{i, \delta}^2$, we use the Wick's formula, noting that all four random terms $\bar{X}_{\delta}^{\Delta}(t)$, $\bar{X}_{\delta}^{\Delta}(t')$, $\left\langle \bar{X}(t), \partial_x K_{\delta} \right\rangle$ and $\left\langle \bar{X}(t'), \partial_x K_{\delta} \right\rangle$ are Gaussian processes. If we denote the needed products of covariances of pairs in the altering partitions by
\begin{align*}
\rho_{1, \delta}(t, t') &:= \E [\bar{X}_{\delta}^{\Delta}(t) \left\langle \bar{X}(t), \partial_x K_{\delta} \right\rangle] \E [\bar{X}_{\delta}^{\Delta}(t') \left\langle \bar{X}(t'), \partial_x K_{\delta} \right\rangle], \\
\rho_{2, \delta}(t, t') &:= \E [\bar{X}_{\delta}^{\Delta}(t') \left\langle \bar{X}(t), \partial_x K_{\delta} \right\rangle] \E [\bar{X}_{\delta}^{\Delta}(t) \left\langle \bar{X}(t'), \partial_x K_{\delta} \right\rangle], \\
\rho_{3, \delta}(t, t') &:= \E [\left\langle \bar{X}(t), \partial_x K_{\delta} \right\rangle \left\langle \bar{X}(t'), \partial_x K_{\delta} \right\rangle] \E [\bar{X}_{\delta}^{\Delta}(t) \bar{X}_{\delta}^{\Delta}(t')],
\end{align*}
we arrive at
$$
\E s_{i, \delta}^2 = 2 \int_0^T \int_0^t b_i(t) b_i(t') (\rho_{1, \delta}(t, t') + \rho_{2, \delta}(t, t') + \rho_{3, \delta}(t, t')) \, dt' \, dt.
$$
Now \eqref{eq:U3mainpart} follows from this and Lemma \ref{lemma:S6ACP} (i)-(iii). Therefore, we deduce that $\delta \E |U_{3, \delta, N}| < \eta$ for sufficiently small $\delta$ (depending on $N$ and thus on $\eta$). Together with \eqref{eq:U3remainder} and since $\eta$ was arbitrary, we obtain $\delta U_{3, \delta} \stackrel{L^1(\Omega)}{\longrightarrow} 0$.
\end{proof}

\begin{remark} \label{Karhunen}
We can compare our expansion \eqref{ci1} with the classical Karhunen-Lo\`eve expansion (see, for instance, Section 37.5 in \cite{loeve}). Let us fix $x_0 \in \Lambda$. In order to obtain an expansion similar to \eqref{ci1} by the Karhunen-Lo\`eve theorem one needs to check the continuity of the mapping:
$$
(t, t') \mapsto \E [ \widetilde{X}(t, x_0)  \, \widetilde{X}(t', x_0) ]
$$   
on $[0,T] \times [0,T]$. However this property is not clear. We can only prove Proposition \ref{prop:L2 integrability} which gives us an $L^2$-regularity for $\widetilde X(\cdot , x_0)$. We have used such property in order to prove \eqref{ci1}. 
\end{remark}
  
\begin{lemma} \label{lemma:U4}
As $\delta \rightarrow 0$, we have that $\delta U_{4, \delta} \stackrel{\P}{\rightarrow} 0$.
\end{lemma}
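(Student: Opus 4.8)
The plan is to estimate $\delta U_{4,\delta}$ directly, bounding the nonlinear weight $\langle F(X(t)),K_\delta\rangle$ in the $\P$-a.s.\ sense and absorbing the (possibly large) factor $X_\delta^\Delta$ through the already established order bound $\mathcal I_\delta=O_\P(\delta^{-2})$ on the observed Fisher information. First I would fix $p>2$ as in hypothesis (c) and let $p':=p/(p-1)\in(1,2)$ be its conjugate exponent. Since by Proposition \ref{prop:regulXtilde} the solution has $\P$-a.s.\ continuous paths with values in $C(\bar\Lambda)$, for $\P$-a.e.\ $\omega$ the trajectory $\{X(t,\omega):t\in[0,T]\}$ is a bounded subset of $L^\infty(\Lambda)$, so hypothesis (c) furnishes an a.s.\ finite random constant $C(\omega)$ with $\sup_{t\in[0,T]}\|F(X(t))\|_{L^p(\Lambda)}\le C(\omega)$. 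Then I would record the elementary scaling identity: by the substitution $x=\delta y+x_0$ together with the compact support of $K$ (so that the support of $K$ is contained in $\Lambda_{\delta,x_0}$ for all small $\delta$),
$$
\|K_\delta\|_{L^{p'}(\Lambda)}=\delta^{1/p'-1/2}\|K\|_{L^{p'}(\R)}=\delta^{1/2-1/p}\|K\|_{L^{p'}(\R)},
$$
whose exponent $1/2-1/p$ is strictly positive precisely because $p>2$. H\"older's inequality then yields, $\P$-a.s.\ and uniformly in $t\in[0,T]$,
$$
\bigl|\langle F(X(t)),K_\delta\rangle\bigr|\le\|F(X(t))\|_{L^p(\Lambda)}\,\|K_\delta\|_{L^{p'}(\Lambda)}\le C(\omega)\,\|K\|_{L^{p'}(\R)}\,\delta^{1/2-1/p}.
$$

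With this bound in hand the rest is routine. Inserting it into the definition of $U_{4,\delta}$ and applying Cauchy--Schwarz in $t$ one gets
$$
\delta|U_{4,\delta}|\le C(\omega)\,\|K\|_{L^{p'}(\R)}\,\delta^{1/2-1/p}\,\Bigl(\delta\int_0^T\bigl|X_\delta^\Delta(t)\bigr|\,dt\Bigr)\le C(\omega)\,\|K\|_{L^{p'}(\R)}\,\delta^{1/2-1/p}\,\sqrt{T}\,\|K\|_{L^2(\R)}\,\bigl(\delta^2\mathcal I_\delta\bigr)^{1/2}.
$$
Since $\delta^2\mathcal I_\delta=O_\P(1)$ and $C(\omega)<\infty$ $\P$-a.s., the right-hand side is $O_\P(\delta^{1/2-1/p})$, which tends to zero in probability as $\delta\to 0$; this gives the claimed convergence $\delta U_{4,\delta}\stackrel{\P}{\rightarrow}0$.

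The one step that is not purely mechanical — and the one for which hypothesis (c) is indispensable — is the uniform-in-$t$, $\P$-a.s.\ bound on $\|F(X(t))\|_{L^p(\Lambda)}$: here one must exploit that $t\mapsto X(t)$ has $\P$-a.s.\ continuous paths in $C(\bar\Lambda)$ (Proposition \ref{prop:regulXtilde}), so that the range of a single trajectory sits inside a bounded ball of $L^\infty(\Lambda)$, which is exactly the type of set that (c) sends into a bounded subset of $L^p(\Lambda)$ with $p>2$. Everything else — the $L^{p'}$-scaling of $K_\delta$ and the order bound $\mathcal I_\delta=O_\P(\delta^{-2})$ from Proposition \ref{prop:asymptoticsbarI} — is already at our disposal, so I do not expect any further obstacle.
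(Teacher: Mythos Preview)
Your proof is correct and shares the key idea with the paper: both exploit $X\in C([0,T];C(\bar\Lambda))$ $\P$-a.s.\ together with hypothesis (c) to get $\sup_{t\in[0,T]}\|F(X(t))\|_{L^p(\Lambda)}<\infty$ a.s., and then use H\"older and the scaling $\|K_\delta\|_{L^{p'}(\Lambda)}\lesssim\delta^{1/2-1/p}$ to make the nonlinear weight $\langle F(X(t)),K_\delta\rangle$ small. The one genuine difference is in how the factor $X_\delta^\Delta$ is absorbed. The paper uses the pointwise-in-$t$ bound $|X_\delta^\Delta(t)|\lesssim g_{3/2-\eps}(t)\,\delta^{-1-\eps}$ from Lemma~\ref{lemma:upperbounds} (with $g_{3/2-\eps}\in L^2([0,T])$), which keeps the whole argument $\P$-a.s.\ and yields $\delta|U_{4,\delta}|\lesssim\delta^{1/2-1/p-\eps}\to0$ almost surely. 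You instead apply Cauchy--Schwarz in $t$ and invoke $\delta^2\mathcal I_\delta=O_\P(1)$ from Proposition~\ref{prop:asymptoticsbarI}, obtaining $\delta|U_{4,\delta}|=O_\P(\delta^{1/2-1/p})$. Your route is a bit more economical (it does not need Lemma~\ref{lemma:upperbounds}) and loses no $\eps$; the paper's route gives the slightly stronger a.s.\ convergence. Both are fully adequate for the stated convergence in probability.
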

\begin{proof}
We show the desired convergence in the $\P$-a.s. sense. Fix $\omega \in \Omega$, $\P$-a.s. By Lemma \ref{lemma:upperbounds} we have that $\left| X_{\delta}^{\Delta}(t) \right| \leq \left| \widetilde{X}_{\delta}^{\Delta}(t) \right| + \left| \bar{X}_{\delta}^{\Delta}(t) \right| \lesssim g_{3/2 - \eps}(t) \,
\delta^{-1 - \eps}$,  for  $t\in (0,T]$,  $\eps \in (0,1/2)$
 with the function  $g_{3/2 - \eps}(t)$ as in  \eqref{continua}. We have
$$
\delta |U_{4, \delta}(\omega)| \leq \delta \int_0^T \left| X_{\delta}^{\Delta}(t, \omega) \right| \left| \left\langle F(X(t, \omega)), K_{\delta} \right\rangle \right| \, dt \lesssim \delta^{- \eps} \int_0^T g_{{3}/{2} - \eps}(t) \int_{\Lambda} |F(X(t, \cdot , \omega))(x)| \, |K_{\delta}(x)| \, dx \, dt.   
$$
Now recall that from the proof of Proposition \ref{prop:regulXtilde} we know that $X \in C([0,T]; C(\bar \Lambda))$, $\P$-a.s. It follows that there exists a bounded set $Q \subset C(\bar \Lambda)$ such that $X(t, \cdot , \omega) \in Q$ for any $t \in [0,T]$.

Using hypothesis (c) in \eqref{bb}, we obtain that $F(Q)$ is a bounded set in $L^{p}(\Lambda)$ for some $p > 2$ (actually, also $Q$ depends on $\omega$). Hence
$$
\sup_{t \in [0,T]} \|F(X(t, \omega)) \|_{L^p(\Lambda)} = M_0 < \infty. 
$$
Recall that $q = \frac{p}{p-1} \in [1,2)$. By the H\"older inequality we infer, for any $t \in [0,T]$,
$$
\int_{\Lambda} |F(X(t, \cdot , \omega))(x)| \,  |K_{\delta}(x)| \, dx \leq M_0 \, \|K_{\delta} \|_{L^q(\Lambda)} \leq C \, \delta^{1/q \, - 1/2},
$$
\vskip -2mm 
\noindent
with $C > 0$ possibly depending on $\omega$. Finally we get $\delta |U_{4, \delta}(\omega)| $ $\lesssim $ $\delta^{- \eps} \delta^{1/q \, - 1/2}$. Since  $1/q \, - 1/2 >0$ we can choose $\eps > 0$ small enough and obtain that $\delta |U_{4, \delta}(\omega)|$ tends to zero as $\delta \rightarrow 0$ for $\omega \in \Omega$, $\P$-a.s. The proof is complete.
\end{proof}

\begin{proof}[Proof of Theorem \ref{thm:main}]
Consider the error decomposition \eqref{eq:error}. The term $\delta^{-1} (\mathcal I_{\delta})^{-1} \mathcal R_{\delta} \stackrel{\P}{\rightarrow} 0$ as $\delta \rightarrow 0$ by Lemmas \ref{lemma:U1}, \ref{lemma:U2}, \ref{lemma:U3} and \ref{lemma:U4} so there is no asymptotic bias from the nonlinearity. For the other term, we write
\begin{equation} \label{eq:error2}
\delta^{-1} (\mathcal I_{\delta})^{-1} \mathcal M_{\delta} = \frac{\mathcal M_{\delta}}{(\mathcal I_{\delta})^{1/2}} \cdot \frac{1}{(\delta^2 \mathcal I_{\delta})^{1/2}}.
\end{equation}
Proposition \ref{prop:asymptoticsbarI}(i)-(iii) provides
$$
\frac{\mathcal I_{\delta}}{\E \bar{\mathcal I}_{\delta}} = \frac{\bar{\mathcal I}_{\delta}}{\E \bar{\mathcal I}_{\delta}} + \frac{\delta^2 o_{\P}(\delta^{-2})}{\delta^2 \E \bar{\mathcal I}_{\delta}} \stackrel{\P}{\rightarrow} 1, \quad \text{as well as} \quad \delta^2 \E \bar{\mathcal I_{\delta}} \rightarrow \frac{T \| K' \|_{L^2(\R)}^2}{2 \vartheta \| K \|_{L^2(\R)}^2}.
$$
Therefore, the first factor in \eqref{eq:error2} converges to $N(0,1)$ in distribution by the standard continuous martingale central limit theorem (e.g., \cite{Ku}, Theorem 1.19, or \cite{LS}, Theorem 5.5.4), while the second factor assembles the stated asymptotic variance. The result follows by applying Slutsky's lemma.
\end{proof}

\begin{appendix}

\section{Appendix}

\subsection{Some results from \cite{ACP}} \label{subsection:resultsACP}

We provide the most pertinent results from \cite{ACP} that are revisited and adjusted to our main case $\gamma = 0$, $d = 1$ and $B = I$. 

We start with the scaling lemmas and we continue with establishing upper bounds and convergences that help us to formulate Proposition \ref{prop:asymptoticsbarI} (that corresponds to Proposition 3 in \cite{ACP}) and the convergences $\delta U_{i, \delta} \stackrel{\P}{\rightarrow} 0$, for $i = 1, 2, 3, 4$, provided by Lemmas \ref{lemma:U1}, \ref{lemma:U2}, \ref{lemma:U3} and \ref{lemma:U4}.

We show the proof only for Lemma \ref{lemma:upperbounds}, which provides the important result that is different from the one that would come from Lemma S.3 in \cite{ACP} just by allowing $\gamma = 0$. Recall that $x_0$ is fixed in $\Lambda = (0,1)$.

As in Section S.1.2 in \cite{ACP}, we denote $(\lambda_k, \Phi_k)_{k=1}^{\infty}$ the eigensystem of the positive, self-adjoint op\-er\-a\-tor $- \Delta$ on the shifted domain $\Lambda - x_0$ with Dirichlet boundary conditions. That is
$$
\lambda_k = \pi^2 k^2, \quad \Phi_k(x) = e_k(x + x_0) = \sqrt{2} \sin(\pi k (x + x_0)), \quad x \in (- x_0, 1 - x_0).
$$
To ease the notation, we omit the $x_0$ in the lower index, so we write $\Lambda_{\delta}$, $z_{\delta}$, $\Delta_{\delta}$ instead of $\Lambda_{\delta, x_0}$, $z_{\delta, x_0}$, $\Delta_{\delta, x_0}$. Without loss of generality, we consider $\theta = 1$ for the semigroup and the rescaled semigroup, so we write $S$ and $S_{\delta}$ instead of $S_{\theta = 1}$ and $S_{\theta = 1, \delta, x_0}$.

\begin{lemma}[Lemma 15 in \cite{ACP}] \label{lemma:15ACP}
For $2 \leq p < \infty$, $\delta > 0$:
\begin{itemize}
\item[(i)] If $z \in W^{2,p}(\Lambda_{\delta})$ then $\Delta z_{\delta} = \delta^{-2} \left( \Delta_{\delta} z \right)_{\delta}$.
\item[(ii)] If $z \in L^p(\Lambda_{\delta})$ then $S(t) z_{\delta} = \left( S_{\delta} (t \delta^{-2}) z \right)_{\delta}$, $t \geq 0$.
\end{itemize}
\end{lemma}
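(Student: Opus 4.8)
The plan is to observe that both identities are instances of the elementary fact that the rescaling $w\mapsto w_\delta$, $w_\delta(x)=\delta^{-1/2}w(\delta^{-1}(x-x_0))$, is --- apart from boundary-condition bookkeeping --- the unitary change of variables associated with the affine bijection $\sigma_\delta\colon\Lambda\to\Lambda_\delta$, $\sigma_\delta(x)=\delta^{-1}(x-x_0)$. First I would record the properties of the operator $\mathcal S_\delta w:=w_\delta$ that are needed: the substitution $u=\sigma_\delta(x)$ (Jacobian $\delta^{-1}$) shows $\|w_\delta\|_{L^2(\Lambda)}=\|w\|_{L^2(\Lambda_\delta)}$, so $\mathcal S_\delta$ is a unitary operator from $L^2(\Lambda_\delta)$ onto $L^2(\Lambda)$; since $\sigma_\delta$ maps $\partial\Lambda$ onto $\partial\Lambda_\delta$, the operator $\mathcal S_\delta$ carries functions vanishing on $\partial\Lambda_\delta$ to functions vanishing on $\partial\Lambda$ and maps $W^{2,p}(\Lambda_\delta)$ bijectively onto $W^{2,p}(\Lambda)$ (an affine change of variables preserves classical second-order Sobolev regularity, and on a bounded interval with $p\ge 2$ the Bessel-potential space associated with the Dirichlet Laplacian coincides with the usual $W^{2,p}$ intersected with the Dirichlet condition, by elliptic regularity together with the embedding $W^{2,p}\hookrightarrow C^1$).

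For (i) the plan is to differentiate directly. For $z\in W^{2,p}(\Lambda_\delta)$ the chain rule gives, a.e.\ on $\Lambda$,
\[
(z_\delta)''(x)=\delta^{-1/2}\,\delta^{-2}\,z''\bigl(\delta^{-1}(x-x_0)\bigr)=\delta^{-2}\,(z'')_\delta(x)=\delta^{-2}\,(\Delta_\delta z)_\delta(x),
\]
because on the one-dimensional domain $\Lambda_\delta$ the Dirichlet Laplacian $\Delta_\delta$ acts as $z\mapsto z''$; since $z_\delta$ also satisfies the Dirichlet condition on $\partial\Lambda$, its second derivative is exactly $\Delta z_\delta$, which is (i).

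For (ii) the plan is to use the spectral representations of the two Dirichlet heat semigroups. With $\lambda_k=\pi^2 k^2$ and $\Phi_k$ as in the statement (so that $\Phi_k(\cdot-x_0)=e_k$), I would check directly that $\psi_k^\delta(y):=\sqrt{\delta}\,\Phi_k(\delta y)$, $y\in\Lambda_\delta$, defines an $L^2(\Lambda_\delta)$-orthonormal family of Dirichlet eigenfunctions of $-\Delta_\delta$ with eigenvalues $\delta^2\lambda_k$, and that $\mathcal S_\delta\psi_k^\delta=(\psi_k^\delta)_\delta=e_k$. Since $\mathcal S_\delta$ is unitary it maps the orthonormal basis $(\psi_k^\delta)_k$ of $L^2(\Lambda_\delta)$ onto the orthonormal basis $(e_k)_k$ of $L^2(\Lambda)$, so $\langle w,\psi_k^\delta\rangle_{L^2(\Lambda_\delta)}=\langle w_\delta,e_k\rangle_{L^2(\Lambda)}$ for all $w$ and $k$. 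Then, for $z\in L^p(\Lambda_\delta)\subset L^2(\Lambda_\delta)$ and $t\ge 0$, I would expand $S_\delta(s)z=\sum_k e^{-\delta^2\lambda_k s}\langle z,\psi_k^\delta\rangle\psi_k^\delta$ in $L^2(\Lambda_\delta)$, specialize to $s=t\delta^{-2}$ (so that $\delta^2\lambda_k s=\lambda_k t$), and apply $\mathcal S_\delta$ to obtain
\[
\bigl(S_\delta(t\delta^{-2})z\bigr)_\delta=\sum_{k\ge 1}e^{-\lambda_k t}\,\langle z_\delta,e_k\rangle_{L^2(\Lambda)}\,e_k=S(t)z_\delta ,
\]
the last equality being the spectral form of $S(t)=e^{t\Delta}$ on $L^2(\Lambda)$. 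This gives the identity in $L^2(\Lambda)$, hence a.e.; and since for $t>0$ both sides are smooth while for $t=0$ both equal $z_\delta\in L^p(\Lambda)$, it holds in $L^p(\Lambda)$ as well.

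I do not expect a genuine obstacle here: the only point that deserves care is the compatibility of the affine rescaling with the Bessel-potential/Dirichlet-Laplacian setup recorded in the first paragraph --- namely that $\mathcal S_\delta$ intertwines the two Dirichlet Laplacians and their domains --- and in one dimension with $p\ge 2$ this is routine. As an alternative to the spectral argument, (ii) also follows from (i) by uniqueness for the heat equation on $\Lambda$: setting $u(t):=\bigl(S_\delta(t\delta^{-2})z\bigr)_\delta$ one has $u(0)=z_\delta$ and, by (i), $\dot u(t)=\delta^{-2}\bigl(\Delta_\delta S_\delta(t\delta^{-2})z\bigr)_\delta=\Delta u(t)$, hence $u(t)=S(t)z_\delta$.
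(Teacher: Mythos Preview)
Your argument is correct: the chain-rule computation for (i) and the spectral/unitary-intertwining argument for (ii) both go through, and the alternative derivation of (ii) from (i) via uniqueness for the heat equation is a nice touch. Note that the paper does not supply its own proof of this lemma---it is simply quoted from \cite{ACP} (the appendix states that only Lemma~\ref{lemma:upperbounds} is proved among the cited results)---so there is no in-paper proof to compare against; your direct verification is exactly the kind of standard scaling computation one expects here.
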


\begin{lemma}[Lemma 16 in \cite{ACP}] \label{lemma:16ACP}
Let $2 \leq p < \infty$, $\delta > 0$. If $h > 0$ and $z \in L^p(\Lambda_{\delta})$ (or $h \leq 0$ and $z \in W^{-2h,p}(\Lambda_{\delta})$) then $(- \Delta)^{-h} z_{\delta} = \delta^{2h} \left( (-\Delta_{\delta})^{-h} z \right)_{\delta}$.
\end{lemma}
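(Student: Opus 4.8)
The plan is to deduce the identity from the semigroup scaling of Lemma \ref{lemma:15ACP}(ii) via the subordination formula for negative fractional powers of $-\Delta$. Conceptually, the rescaling $w \mapsto w_\delta$ conjugates $(-\Delta_\delta)$ into $\delta^{2}(-\Delta)$ — Lemma \ref{lemma:15ACP}(i) being precisely the single-power instance $h=-1$ — so the whole family of identities is a functional-calculus statement, which for $p>2$ is cleanest to obtain through the semigroup integral. For $p=2$ the scaling operator is unitary and the claim is immediate from the spectral theorem; the argument below handles all $2\le p<\infty$ at once.

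First assume $h>0$. Since $-\Delta$ on $\Lambda$ (and on $\Lambda_\delta$) has spectral gap $\lambda_1=\pi^2>0$ and generates on $L^p$ a $C_0$-semigroup with $\| S(t) \|_{L^p\to L^p}\lesssim e^{-\lambda_1 t}$, the operator $(-\Delta)^{-h}$ is bounded on $L^p(\Lambda)$ and
$$
(-\Delta)^{-h} = \frac{1}{\Gamma(h)} \int_0^\infty t^{h-1} S(t) \, dt ,
$$
the integral converging in operator norm because $t^{h-1}$ is integrable at $0$ (here $h>0$) and $\| S(t) \|$ decays exponentially at $\infty$; the same holds on $L^p(\Lambda_\delta)$ with $S_\delta$ in place of $S$. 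Applying this to $z_\delta$ and inserting Lemma \ref{lemma:15ACP}(ii) inside the integral,
$$
(-\Delta)^{-h} z_\delta = \frac{1}{\Gamma(h)} \int_0^\infty t^{h-1} \bigl( S_\delta(t \delta^{-2}) z \bigr)_\delta \, dt .
$$
The map $w\mapsto w_\delta$ is a bounded linear bijection $L^p(\Lambda_\delta)\to L^p(\Lambda)$ (explicitly $\| w_\delta \|_{L^p(\Lambda)} = \delta^{1/p-1/2}\| w \|_{L^p(\Lambda_\delta)}$), hence commutes with the Bochner integral; pulling it out and substituting $s=t\delta^{-2}$ gives
$$
(-\Delta)^{-h} z_\delta = \Bigl( \frac{\delta^{2h}}{\Gamma(h)} \int_0^\infty s^{h-1} S_\delta(s) z \, ds \Bigr)_\delta = \delta^{2h} \bigl( (-\Delta_\delta)^{-h} z \bigr)_\delta ,
$$
which is the claim for $h>0$.

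For $h\le 0$ I would argue by inversion. Put $k=-h\ge 0$; the case $k=0$ is trivial, so let $k>0$ and $z\in W^{2k,p}(\Lambda_\delta)=W^{-2h,p}(\Lambda_\delta)$, so that $y:=(-\Delta_\delta)^{k} z\in L^p(\Lambda_\delta)$. Applying the case already proved (with exponent $k>0$) to $y$ yields $(-\Delta)^{-k} y_\delta = \delta^{2k} \bigl( (-\Delta_\delta)^{-k} y \bigr)_\delta = \delta^{2k} z_\delta$. The right-hand side lies in $\Dom\bigl( (-\Delta)^{k} \bigr)$ (equivalently $z_\delta\in W^{2k,p}(\Lambda)$, which follows by iterating Lemma \ref{lemma:15ACP}(i)), so applying $(-\Delta)^{k}$ to both sides and rearranging gives $(-\Delta)^{k} z_\delta = \delta^{-2k} \bigl( (-\Delta_\delta)^{k} z \bigr)_\delta$, i.e. the claim with $h=-k$.

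The only points needing (routine) care are the validity of the subordination formula together with the exponential semigroup bound on $L^p$ rather than just $L^2$, and the interchange of the scaling operator with the Bochner integral; neither is a genuine obstacle, since Dirichlet heat-semigroup estimates on bounded intervals and the explicit form of the scaling map are standard. If one prefers to avoid the $L^p$ functional calculus altogether, an alternative is to verify the identity first on the dense span of finite eigenfunction sums, where it collapses to the elementary Dirichlet eigenvalue/eigenfunction scaling relation between $\Lambda$ and $\Lambda_\delta$, and then pass to the limit using boundedness of all operators involved.
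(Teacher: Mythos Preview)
The paper does not supply its own proof of this lemma; it is quoted from \cite{ACP} and the surrounding text states explicitly that among the results of Section~\ref{subsection:resultsACP} only Lemma~\ref{lemma:upperbounds} is proved. Your argument via the subordination integral $(-\Delta)^{-h}=\Gamma(h)^{-1}\int_0^\infty t^{h-1}S(t)\,dt$ combined with the semigroup scaling of Lemma~\ref{lemma:15ACP}(ii) and the substitution $s=t\delta^{-2}$ is correct and is the natural route to such identities. The one wobble is the parenthetical claim that $z_\delta\in W^{2k,p}(\Lambda)$ ``follows by iterating Lemma~\ref{lemma:15ACP}(i)'': that only covers integer $k$. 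You do not actually need this, however, since the equality $(-\Delta)^{-k}y_\delta=\delta^{2k}z_\delta$ you have just established already exhibits $z_\delta$ as an element of the range of $(-\Delta)^{-k}$, which is precisely $\Dom\bigl((-\Delta)^{k}\bigr)$.
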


\begin{proposition}[Proposition 17 in \cite{ACP}] \label{prop:17ACP}
Let $2 \leq p < \infty$, $t > 0$. Then: 
\begin{itemize}
\item[(i)] For any $h\geq 0$ there exists a universal constant $M_{h}<\infty$ such that \\ $\sup_{t > 0, 0 < \delta \leq 1} \left\| (-t\Delta_{\delta})^{h}S_{\delta}(t) \right\|_{L^{p}(\Lambda_{\delta})} \leq M_{h}$.
\item[(ii)] If $z\in L^p(\R)$ then $S_{\delta}(t)(z|_{\Lambda_{\delta}})\rightarrow e^{t \Delta_{0}} z$ in $L^p(\R)$ as $\delta \rightarrow 0$. 
\end{itemize}
\end{proposition}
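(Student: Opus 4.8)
The plan is to reduce everything to explicit heat--kernel estimates, exploiting that $\Lambda_{\delta}$ is an interval whose length $L_{\delta}:=|\Lambda_{\delta}|=\delta^{-1}$ only \emph{grows} as $\delta\to0$, so that $S_{\delta}$ looks more and more like the free heat semigroup on $\R$. Since we take $\theta=1$, $S_{\delta}(t)=e^{t\Delta_{\delta}}$ is the Dirichlet heat semigroup on $\Lambda_{\delta}$, whose kernel is given by the method of images,
\[
p^{\delta}_{t}(x,y)=\sum_{k\in\mathbb{Z}}\bigl[g_{t}(x-y+2kL_{\delta})-g_{t}(x+y-2a_{\delta}+2kL_{\delta})\bigr],\qquad g_{t}(u):=(4\pi t)^{-1/2}e^{-u^{2}/4t},
\]
with $a_{\delta}:=\inf\Lambda_{\delta}$. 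For part (i) with $h=0$ the standard domination $0\le p^{\delta}_{t}(x,y)\le g_{t}(x-y)$ and Young's inequality give $\|S_{\delta}(t)\|_{L^{p}(\Lambda_{\delta})\to L^{p}(\Lambda_{\delta})}\le\|g_{t}\|_{L^{1}(\R)}=1$, so $M_{0}=1$.

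For part (i) with an integer $h=m$, I would use that $(-\Delta_{\delta})^{m}S_{\delta}(t)=(-1)^{m}\partial_{t}^{m}S_{\delta}(t)$ has kernel $(-1)^{m}\partial_{t}^{m}p^{\delta}_{t}$, i.e.\ the same signed sum with every $g_{t}$ replaced by $\partial_{t}^{m}g_{t}=\partial_{u}^{2m}g_{t}$. Two elementary facts then suffice: by Gaussian scaling $\int_{\R}|\partial_{u}^{2m}g_{t}(u)|\,du=c_{m}t^{-m}$; and --- the point that delivers uniformity in $\delta$ --- for fixed $x$, as $y$ runs over $\Lambda_{\delta}$ and $k$ over $\mathbb{Z}$ the arguments $x-y+2kL_{\delta}$ (and likewise $x+y-2a_{\delta}+2kL_{\delta}$) sweep out \emph{pairwise disjoint} subintervals of $\R$, so
\[
\sum_{k\in\mathbb{Z}}\int_{\Lambda_{\delta}}\bigl|\partial_{u}^{2m}g_{t}(x-y+2kL_{\delta})\bigr|\,dy\;\le\;\int_{\R}\bigl|\partial_{u}^{2m}g_{t}(u)\bigr|\,du=c_{m}t^{-m},
\]
and similarly for the reflection term. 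Thus $\sup_{x}\int_{\Lambda_{\delta}}|\partial_{t}^{m}p^{\delta}_{t}(x,y)|\,dy\le2c_{m}t^{-m}$, and by symmetry of the kernel the same holds for $\sup_{y}\int|\cdot|\,dx$; Schur's test then yields $\|(-t\Delta_{\delta})^{m}S_{\delta}(t)\|_{L^{p}(\Lambda_{\delta})\to L^{p}(\Lambda_{\delta})}\le2c_{m}$ for all $p\in[1,\infty]$, $t>0$ and $\delta\in(0,1]$ at once, so $M_{m}:=2c_{m}$. For non-integer $h$ I would reduce to the integer case by the Balakrishnan formula $(-\Delta_{\delta})^{h}e^{t\Delta_{\delta}}=\Gamma(-h)^{-1}\int_{0}^{\infty}u^{-h-1}\bigl(e^{(t+u)\Delta_{\delta}}-\sum_{k<h}\tfrac{u^{k}}{k!}\Delta_{\delta}^{k}e^{t\Delta_{\delta}}\bigr)\,du$, taking the $y$-integral inside and splitting at $u=t$ (the part $u\le t$ handled by the $m=\lceil h\rceil$ bound, the tail by the $m\le\lfloor h\rfloor$ bounds, all uniform in $\delta$ because the integer Schur bounds are); alternatively, the scaling identities of Lemmas \ref{lemma:15ACP}--\ref{lemma:16ACP} conjugate $(-t\Delta_{\delta})^{h}S_{\delta}(t)$ on $L^{p}(\Lambda_{\delta})$, up to scalar factors that cancel, to $(-s\Delta)^{h}S(s)$ with $s=t\delta^{2}$ on the \emph{fixed} interval $\Lambda$, so that (i) becomes $\sup_{s>0}\|(-s\Delta)^{h}S(s)\|_{L^{p}(\Lambda)\to L^{p}(\Lambda)}<\infty$, which holds by analyticity for small $s$ and exponential stability of the Dirichlet semigroup for large $s$.

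For part (ii) I would argue by density and a probabilistic (equivalently, kernel) comparison. By (i) with $h=0$, the maps $z\mapsto S_{\delta}(t)(z|_{\Lambda_{\delta}})$ (extended by $0$ to $\R$) and $z\mapsto e^{t\Delta_{0}}z$ are contractions on $L^{p}(\R)$, uniformly in $\delta$, so it suffices to prove the convergence for $z$ in the dense set $C_{c}(\R)$. For such $z$, realising $S_{\delta}(t)$ as Brownian motion killed on leaving $\Lambda_{\delta}$ (up to a fixed time change), one has $S_{\delta}(t)(z|_{\Lambda_{\delta}})(x)=\E_{x}[\,z(W_{t})\,\mathbf{1}\{W_{[0,t]}\subset\Lambda_{\delta}\}\,]$; as $\delta\downarrow0$ the domains increase to $\R$, the indicator increases to $1$, hence $S_{\delta}(t)(z|_{\Lambda_{\delta}})(x)\to\E_{x}[z(W_{t})]=e^{t\Delta_{0}}z(x)$ for every $x$, with the $\delta$-independent majorant $|S_{\delta}(t)(z|_{\Lambda_{\delta}})|\le e^{t\Delta_{0}}|z|\in L^{p}(\R)$; dominated convergence then gives convergence in $L^{p}(\R)$, and a standard $3\varepsilon$-argument passes to arbitrary $z\in L^{p}(\R)$. (Equivalently, method of images gives $p^{\delta}_{t}(x,y)=g_{t}(x-y)+r^{\delta}_{t}(x,y)$ with $r^{\delta}_{t}$ a signed sum of Gaussians centred at the images of $y$ across $\partial\Lambda_{\delta}$; for $z\in C_{c}(\R)$ these sit at distance $\to\infty$ from $\mathrm{supp}\,z$, so the associated $L^{p}$-correction vanishes.)

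The step I expect to be the main obstacle is the uniformity in $\delta$ of $M_{h}$ in part (i): $\Lambda_{\delta}$ has length $\delta^{-1}\to\infty$, and the naive estimate --- bounding the kernel by summing $\|\partial_{t}^{m}g_{t}\|_{L^{1}(\R)}$ over all image reflections --- \emph{diverges}. The remedy is the disjointness observation above: once $y$ is restricted to $\Lambda_{\delta}$, the reflected copies occupy disjoint windows, so the full image sum is still dominated by a single integral over $\R$ whose value does not see $|\Lambda_{\delta}|$. Part (ii) and the extension to fractional $h$ are then comparatively routine.
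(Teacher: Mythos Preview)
The paper does not give its own proof of this proposition: it is simply quoted from \cite{ACP} (their Proposition~17), and the surrounding appendix explicitly says that proofs are shown only for Lemma~\ref{lemma:upperbounds}. So there is nothing in the present paper to compare your argument against line by line.

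That said, your proposal is correct, and in fact your \emph{alternative} route for part~(i) --- conjugating by the scaling isometries of Lemmas~\ref{lemma:15ACP}--\ref{lemma:16ACP} to reduce $\|(-t\Delta_{\delta})^{h}S_{\delta}(t)\|_{L^{p}(\Lambda_{\delta})}$ to $\|(-s\Delta)^{h}S(s)\|_{L^{p}(\Lambda)}$ with $s=t\delta^{2}$ on the \emph{fixed} domain $\Lambda$ --- is the cleanest and most natural argument, and is almost certainly what \cite{ACP} has in mind: it makes the uniformity in $\delta$ automatic and reduces everything to standard analytic-semigroup bounds on a single bounded interval. Your method-of-images computation with the disjoint-windows observation is also valid and more self-contained, but it is doing more work than necessary given that the scaling lemmas are already available. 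For part~(ii), your density plus killed-Brownian-motion (equivalently, kernel domination $0\le p_{t}^{\delta}\le g_{t}$ and monotone exhaustion $\Lambda_{\delta}\uparrow\R$) argument is the standard one and is fine.

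One small caution on the Balakrishnan route for fractional $h$: the formula you wrote needs $h\notin\N$ and some care with the sign/normalisation of $\Gamma(-h)$; since the scaling reduction handles all $h\ge0$ uniformly and you already have it, I would simply drop the Balakrishnan alternative rather than patch it.
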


\begin{lemma}[Lemma 18 in \cite{ACP}] \label{lemma:18ACP}
Let $2 \leq p < \infty$, $h \geq 0$ and let $z \in L^p(\R)$ have compact support in	$\Lambda_{\delta'}$ for some $\delta' > 0$. Then we have for $\delta \leq \delta'$:
\begin{itemize}
\item[(i)] If $z \in W^{2 \lceil h \rceil, p}(\Lambda_{\delta'})$ then $(-\Delta_{\delta})^{h} z \rightarrow (-\Delta_{0})^{h} z$	in $L^p(\R)$ as $\delta \rightarrow 0$.
\item[(ii)] $\sup_{0 < \delta \leq 1} \left\| (-\Delta_{\delta})^{-h} z \right\|_{L^p(\Lambda_{\delta})} \lesssim \max(\| z \|_{L^1(\R)}, \| z \|_{L^p(\R)})$, $h < \frac{1}{2} \left( 1 - \frac{1}{p} \right)$.
\end{itemize}
\end{lemma}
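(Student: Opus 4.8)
The plan is to reduce everything to two estimates on the rescaled Dirichlet heat semigroup that are \emph{uniform in} $\delta$, and then combine them with Proposition~\ref{prop:17ACP}(ii). Note first that since $z$ has compact support in the open interval $\Lambda_{\delta'}$ and $\Lambda_{\delta'}\subseteq\Lambda_\delta$ for $\delta\le\delta'$, the function $z$ (extended by zero) vanishes in a neighbourhood of $\partial\Lambda_\delta$, hence lies in the $L^p$-domain of the Dirichlet Laplacian $\Delta_\delta$ and satisfies $-\Delta_\delta z=-\Delta_0 z=-\Delta z$ as functions on $\R$. The two uniform facts I would use are: (a) the Dirichlet heat kernel of $\Delta_\delta$ is dominated by the Gauss--Weierstrass kernel $p_t$ on $\R$, so $|S_\delta(t)f|\le p_t*|f|=e^{t\Delta_0}|f|$ pointwise for $f\in L^1(\Lambda_\delta)$ extended by zero; and (b) $S_\delta(t)$ is a contraction on every $L^q(\Lambda_\delta)$, $1\le q\le\infty$. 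Combining (a) with Young's inequality yields the $L^1$--$L^p$ smoothing bound $\|S_\delta(t)f\|_{L^p(\Lambda_\delta)}\le\|p_t\|_{L^p(\R)}\|f\|_{L^1(\R)}\le c_p\,t^{-\frac12(1-1/p)}\|f\|_{L^1(\R)}$, again uniformly in $\delta$.

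For (i) I would write $h=m+\sigma$ with $m=\lfloor h\rfloor$ and $\sigma\in[0,1)$. If $\sigma=0$ then $\lceil h\rceil=m$ and $(-\Delta_\delta)^h z=(-\Delta)^m z=(-\Delta_0)^h z$ identically for every $\delta\le\delta'$, so there is nothing to prove. If $\sigma\in(0,1)$ then $\lceil h\rceil=m+1$, the function $w:=(-\Delta)^m z\in W^{2,p}(\R)$ is still compactly supported in $\Lambda_{\delta'}$, and $(-\Delta_\delta)^h z=(-\Delta_\delta)^\sigma w$. Applying the Balakrishnan--Phillips subordination formula to $\Delta_\delta$ and to $\Delta_0$ and subtracting gives
\[
(-\Delta_\delta)^\sigma w-(-\Delta_0)^\sigma w=\frac{1}{\Gamma(-\sigma)}\int_0^\infty s^{-\sigma-1}\bigl(S_\delta(s)w-e^{s\Delta_0}w\bigr)\,ds .
\]
For each fixed $s>0$ the integrand tends to $0$ in $L^p(\R)$ as $\delta\to0$ by Proposition~\ref{prop:17ACP}(ii) (applied with $w$ in place of $z$). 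To pass the limit under the integral I would dominate the integrand uniformly in $\delta$: for $s\le1$, using $S_\delta(s)w-w=\int_0^s S_\delta(u)\Delta w\,du$ and the analogous identity on $\R$, by $2\,s^{-\sigma}\|\Delta w\|_{L^p(\R)}$, which is integrable near $0$ since $\sigma<1$; for $s\ge1$, using the $L^1$--$L^p$ smoothing bound for both semigroups, by $2c_p\,s^{-\sigma-1-\frac12(1-1/p)}\|w\|_{L^1(\R)}$, which is integrable near $\infty$. Dominated convergence for Bochner integrals then yields (i).

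For (ii) the case $h=0$ is trivial, so let $0<h<\frac12(1-1/p)$ and use $(-\Delta_\delta)^{-h}z=\Gamma(h)^{-1}\int_0^\infty t^{h-1}S_\delta(t)z\,dt$ (absolutely convergent for each fixed $\delta$ since $0$ is in the resolvent set of $-\Delta_\delta$). The naive bound through the spectral gap $\lambda_1(\Lambda_\delta)=\pi^2\delta^2$ is useless as $\delta\to0$, so I would split the integral at $t=1$: on $(0,1)$, fact (b) gives $\int_0^1 t^{h-1}\|S_\delta(t)z\|_{L^p(\Lambda_\delta)}\,dt\le h^{-1}\|z\|_{L^p(\R)}$; on $(1,\infty)$, the $L^1$--$L^p$ smoothing bound gives $\int_1^\infty t^{h-1}\|S_\delta(t)z\|_{L^p(\Lambda_\delta)}\,dt\le c_p\|z\|_{L^1(\R)}\int_1^\infty t^{h-1-\frac12(1-1/p)}\,dt$, and the last integral is finite precisely because $h<\frac12(1-1/p)$. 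Adding the two pieces yields the asserted bound with a constant independent of $\delta$.

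The main obstacle throughout is exactly this uniformity in $\delta$: as $\delta\to0$ the rescaled domain $\Lambda_\delta$ exhausts $\R$ and the spectral gap of $-\Delta_\delta$ collapses, so exponential stability of $S_\delta(t)$ cannot be invoked; one must instead rely on the pointwise Gaussian domination of the Dirichlet heat kernel together with the $L^1$--$L^p$ smoothing of the free heat semigroup. Once those $\delta$-free estimates are in place, both parts reduce to a split/dominated-convergence argument, with Proposition~\ref{prop:17ACP}(ii) providing the only genuine convergence input in (i).
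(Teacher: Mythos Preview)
The paper does not prove this lemma: it is quoted directly from \cite{ACP} as one of several auxiliary results ``revisited and adjusted'' to the present setting, and the opening of Section~\ref{subsection:resultsACP} states explicitly that a proof is given \emph{only} for Lemma~\ref{lemma:upperbounds}. So there is no in-paper argument to compare against.

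Your argument is correct. The key observation you isolate---that the Dirichlet heat kernel on $\Lambda_\delta$ is pointwise dominated by the free Gaussian kernel on $\R$, hence the $L^1$--$L^p$ smoothing bound $\|S_\delta(t)f\|_{L^p(\Lambda_\delta)}\le c_p\,t^{-\frac12(1-1/p)}\|f\|_{L^1(\R)}$ holds \emph{uniformly in $\delta$}---is exactly what is needed to replace the collapsing spectral gap of $-\Delta_\delta$ in both parts. For~(i), the reduction to $\sigma\in(0,1)$ via $(-\Delta_\delta)^m z=(-\Delta)^m z$ uses that each $(-\Delta)^k z$, $k<m$, inherits the compact support of $z$ and hence satisfies the Dirichlet condition on $\Lambda_\delta$; this is fine since weak derivatives do not enlarge supports. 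The Balakrishnan representation combined with your two-regime domination (short time via $w\in D(\Delta_\delta)$, long time via Gaussian domination) is a clean way to justify dominated convergence, with Proposition~\ref{prop:17ACP}(ii) supplying the pointwise-in-$s$ limit. For~(ii), the split at $t=1$ with $L^p$-contractivity on $(0,1)$ and the uniform smoothing bound on $(1,\infty)$ gives precisely the stated constraint $h<\tfrac12(1-1/p)$. One cosmetic remark: in the long-time domination for~(i) you only need $\|S_\delta(s)w\|_{L^p}+\|e^{s\Delta_0}w\|_{L^p}\le 2\|w\|_{L^p}$, which already yields an integrable majorant $2\,s^{-\sigma-1}\|w\|_{L^p}$ on $[1,\infty)$; the $L^1$--$L^p$ bound is not strictly needed there (though it does no harm).
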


\begin{lemma}[Lemma 19 in \cite{ACP}] \label{lemma:19ACP}
Let $u \in W^{r,p}(\Lambda)$, $z \in W^{-r,q}(\Lambda_{\delta})$ for some $\delta > 0$, $r \geq 0$ and $\frac{1}{p} + \frac{1}{q} = 1$. Then 
$$
\left| \left\langle u, z_{\delta} \right\rangle \right| \leq \delta^{r + \frac{1}{2} - \frac{1}{p}} \| u \|_{r,p} \left\| (-\Delta_{\delta})^{-r/2} z \right\|_{L^{q}(\Lambda_{\delta})}.
$$
\end{lemma}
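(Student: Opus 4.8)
The plan is to reduce the left-hand side to a pairing between $(-\Delta)^{r/2}u$ and $(-\Delta)^{-r/2}z_{\delta}$ on the fixed domain $\Lambda$, to estimate it by H\"older's inequality there, and then to transport the remaining factor back onto the rescaled domain $\Lambda_{\delta}$ by means of the scaling identities of Lemmas~\ref{lemma:15ACP} and~\ref{lemma:16ACP}. As a preliminary step I would record the elementary change of variables $y=\delta^{-1}(x-x_{0})$, which yields, for every $2\le p<\infty$, every $\delta>0$ and every function $\psi$,
$$
\|\psi_{\delta}\|_{L^{p}(\Lambda)}=\delta^{1/p-1/2}\,\|\psi\|_{L^{p}(\Lambda_{\delta})},
$$
and likewise with $p$ replaced by the conjugate exponent $q$.

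The core of the argument is then the chain
$$
\langle u,z_{\delta}\rangle
=\big\langle (-\Delta)^{r/2}u,\ (-\Delta)^{-r/2}z_{\delta}\big\rangle
\le \|(-\Delta)^{r/2}u\|_{L^{p}(\Lambda)}\,\|(-\Delta)^{-r/2}z_{\delta}\|_{L^{q}(\Lambda)}
=\|u\|_{r,p}\,\|(-\Delta)^{-r/2}z_{\delta}\|_{L^{q}(\Lambda)},
$$
where the first equality uses that $(-\Delta)^{r/2}$ is self-adjoint and positive on $L^{2}(\Lambda)$ with Dirichlet boundary conditions (so it may be moved across the pairing, its inverse $(-\Delta)^{-r/2}$ landing on $z_{\delta}$), the middle step is H\"older's inequality with $1/p+1/q=1$, and the last equality is just the definition of $\|\cdot\|_{r,p}$. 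For the remaining factor I would invoke Lemma~\ref{lemma:16ACP} with $h=r/2\ge 0$, that is $(-\Delta)^{-r/2}z_{\delta}=\delta^{r}\big((-\Delta_{\delta})^{-r/2}z\big)_{\delta}$, and then the scaling identity for the $L^{q}$-norm recorded above, to obtain
$$
\|(-\Delta)^{-r/2}z_{\delta}\|_{L^{q}(\Lambda)}
=\delta^{r}\,\big\|\big((-\Delta_{\delta})^{-r/2}z\big)_{\delta}\big\|_{L^{q}(\Lambda)}
=\delta^{r+1/q-1/2}\,\|(-\Delta_{\delta})^{-r/2}z\|_{L^{q}(\Lambda_{\delta})}.
$$
Since $1/q=1-1/p$, the exponent equals $r+1/2-1/p$, and combining the two displays gives exactly the claimed inequality. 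When $r=0$ the statement reduces to H\"older's inequality on $\Lambda$ together with the scaling identity, so nothing further is needed.

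The only genuinely delicate point is to make these manipulations rigorous when $z$ is merely a negative-order distribution, $z\in W^{-r,q}(\Lambda_{\delta})$, rather than an $L^{q}$ function: the self-adjointness step and the scaling identity of Lemma~\ref{lemma:16ACP} must then be read in the sense of the duality pairing of $W^{r,p}(\Lambda)$ with $W^{-r,q}(\Lambda)$. I would settle this by a density argument --- first establishing the inequality for $z\in C_{c}^{\infty}(\Lambda_{\delta})$, where all the operations above are classical, and then passing to the limit, both sides being continuous in $z$ with respect to the $W^{-r,q}(\Lambda_{\delta})$-norm and $C_{c}^{\infty}(\Lambda_{\delta})$ being dense in $W^{-r,q}(\Lambda_{\delta})$. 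This is Lemma~19 in~\cite{ACP}, and the argument just outlined is its specialization to $d=1$; alternatively one may simply refer to~\cite{ACP}.
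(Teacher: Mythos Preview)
Your argument is correct and is the standard proof: move $(-\Delta)^{r/2}$ across the pairing by self-adjointness, apply H\"older, then use the scaling identity of Lemma~\ref{lemma:16ACP} together with the change-of-variables formula for the $L^{q}$-norm. The paper itself does not prove this lemma at all --- it is simply quoted from \cite{ACP}, and the surrounding text says explicitly that among the imported results only Lemma~\ref{lemma:upperbounds} is given a proof --- so there is nothing to compare against here beyond noting that your outline is exactly the argument in \cite{ACP}, specialized to $d=1$. One minor remark: Lemma~\ref{lemma:16ACP} as stated in the paper is formulated for exponents $2\le p<\infty$, whereas you apply it with the conjugate exponent $q\in(1,2]$; your density reduction to $z\in C_{c}^{\infty}(\Lambda_{\delta})$ takes care of this, since for smooth $z$ the scaling identity is a direct eigenfunction computation valid for any exponent.
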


\begin{lemma}[Lemma 20 in \cite{ACP}] \label{lemma:20ACP}
Let $1 < q < \infty$. The following hold true:
\begin{itemize}
\item[(i)] If $r < 1 - 1/q$ then $\sup_{0 < \delta \leq 1} \left\| (-\Delta_{\delta})^{-r/2}K \right\|_{L^q(\Lambda_{\delta})} < \infty$.
\item[(ii)] If $r \leq 0$ then, as $\delta \rightarrow 0$, $(-\Delta_{\delta})^{-r/2}K \rightarrow (-\Delta_{0})^{-r/2} K$ and $(-\Delta_{\delta})^{-r/2} \Delta K \rightarrow (-\Delta_{0})^{-r/2} \Delta K$	in $L^q(\R)$.
\end{itemize}
\end{lemma}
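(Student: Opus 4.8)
The plan is to obtain both parts directly from the scaling results already recorded, above all Lemma~\ref{lemma:18ACP}, applied to the two compactly supported functions $z=K$ and $z=\Delta K = K''$; nothing beyond these ingredients and Proposition~\ref{prop:17ACP} is needed, since the statement is the $d=1$, $B=I$ specialization of Lemma~20 in \cite{ACP}. Two preliminary observations are used throughout. First, in dimension one $H^2(\R)\hookrightarrow C^1_b(\R)$, so the compactly supported kernel $K$ — and likewise $\Delta K\in L^2(\R)$, again compactly supported — lies in $L^1(\R)\cap L^q(\R)$ for the exponents $q$ under consideration. Second, for $\delta\le\delta'$ the support of $K$ sits strictly inside $\Lambda_\delta$, so that the zero extension of $K$ belongs to $\Dom\big((-\Delta_\delta)^{m}\big)$ as far as the regularity of $K$ permits, the Dirichlet boundary conditions playing no role, and with $\Delta_\delta K = \Delta_0 K = K''$; the same holds with $\Delta_\delta$ replaced by $\Delta_0$.

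For (i), when $r\ge0$ we apply Lemma~\ref{lemma:18ACP}(ii) with $p=q$, $h=r/2$ and $z=K$: the hypothesis $h<\tfrac12(1-1/q)$ is exactly $r<1-1/q$, and the bound $\max(\|K\|_{L^1(\R)},\|K\|_{L^q(\R)})<\infty$ from the first observation gives the uniform estimate. When $r<0$ we are applying the positive power $(-\Delta_\delta)^{|r|/2}$; for $|r|/2\le 1$ (the range in which $(-\Delta_\delta)^{-r/2}K$ is meaningful given $K\in H^2$) the moment inequality for the uniformly sectorial family $\{-\Delta_\delta\}$ gives $\|(-\Delta_\delta)^{-r/2}K\|_{L^q(\Lambda_\delta)}\lesssim \|K\|_{L^q(\Lambda_\delta)}^{1-|r|/2}\|\Delta_\delta K\|_{L^q(\Lambda_\delta)}^{|r|/2}$, and by the second observation the right-hand side equals $\|K\|_{L^q(\R)}^{1-|r|/2}\|K''\|_{L^q(\R)}^{|r|/2}$ for $\delta\le\delta'$, independently of $\delta$; the remaining scales $\delta\in(\delta',1]$ form a compact family of non-degenerate bounded intervals and are handled directly.

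For (ii) we apply Lemma~\ref{lemma:18ACP}(i) with $h=-r/2\ge0$, once to $z=K$ and once to $z=\Delta K$. Since $K$ and $\Delta K$ have compact support in $\Lambda_{\delta'}$, lie in $L^q(\R)$, and — by the first observation together with the regularity fixed in Assumption~\ref{ass:L} — in $W^{2\lceil -r/2\rceil,q}(\Lambda_{\delta'})$, Lemma~\ref{lemma:18ACP}(i) yields $(-\Delta_\delta)^{-r/2}K\to(-\Delta_0)^{-r/2}K$ and $(-\Delta_\delta)^{-r/2}\Delta K\to(-\Delta_0)^{-r/2}\Delta K$ in $L^q(\R)$ as $\delta\to0$, which is the claim; Proposition~\ref{prop:17ACP}(ii) may be invoked to identify the limiting operator on $\R$.

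The one point requiring genuine care — and the only place the argument is not a verbatim invocation of Lemma~\ref{lemma:18ACP} — is the Sobolev bookkeeping: verifying that $K$ and especially $\Delta K$ carry enough $W^{m,q}$-regularity, with $m=2\lceil -r/2\rceil$, for the hypotheses of Lemma~\ref{lemma:18ACP}(i) to apply on the range of $r$ actually used, and, in part~(i) with $r<0$, that the interpolation estimate is uniform in $\delta$ once $\operatorname{supp}K$ is interior to $\Lambda_\delta$. Both are routine given the compact support and the smoothness of the kernel, but they constitute the substance of the proof beyond the mechanical reduction to the scaling lemmas.
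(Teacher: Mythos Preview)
The paper does not give its own proof of this lemma: it is listed in Section~\ref{subsection:resultsACP} as one of the results quoted verbatim from \cite{ACP}, with the explicit remark ``We show the proof only for Lemma~\ref{lemma:upperbounds}''. So there is no in-paper argument to compare against; your proposal is effectively a reconstruction of the proof that \cite{ACP} presumably gives.

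That said, your reduction to Lemma~\ref{lemma:18ACP} is the natural route and almost certainly the one taken in \cite{ACP}, since Lemma~20 there is visibly a specialization of Lemma~18 to the particular test functions $K$ and $\Delta K$. Two points deserve a closer look. First, for part~(ii) with $z=\Delta K$: the standing hypothesis on the kernel is only $K\in H^2(\R)$, so $\Delta K\in L^2(\R)$ and nothing more; your appeal to ``the regularity fixed in Assumption~\ref{ass:L}'' to place $\Delta K$ in $W^{2\lceil -r/2\rceil,q}(\Lambda_{\delta'})$ is not available in general, since Assumption~\ref{ass:L} is an \emph{additional} hypothesis invoked only where needed (e.g.\ in Lemma~\ref{lemma:S2ACP} and Theorem~\ref{thm:main}), not part of the statement of Lemma~\ref{lemma:20ACP}. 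For $r=0$ this is harmless, but for $r<0$ you are implicitly strengthening the hypotheses. Second, in part~(i) for $r<0$, the moment inequality you quote needs the implied constant to be uniform in $\delta$; this follows from the uniform sectoriality encoded in Proposition~\ref{prop:17ACP}(i), but you should say so rather than leave it as ``the moment inequality for the uniformly sectorial family''. Neither point is fatal, but both are exactly the ``Sobolev bookkeeping'' you flag as the substantive content, and they are not yet fully discharged.
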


\begin{lemma}[Lemma 22 in \cite{ACP}] \label{lemma:22ACP}
Define
$$
c(t, z, t', z') := \Cov \left( \left\langle \bar{X}(t), z \right\rangle, \left\langle \bar{X}(t'), z' \right\rangle \right), \quad \text{for } z, z' \in L^2(\Lambda).
$$
Let $z, z' \in L^2(\Lambda_{\delta})$ for $\delta > 0$. Set $z^{(\delta)} := (-\Delta_{\delta})^{-1/2} z$ and $z'^{(\delta)} := (-\Delta_{\delta})^{-1/2} z'$. Then
\begin{align*}
\left| c(t, z_{\delta}, t', z'_{\delta}) \right| &\lesssim \delta^2 \left\| z^{(\delta)} \right\|_{L^2(\Lambda_{\delta})} \left\| z'^{(\delta)} \right\|_{L^2(\Lambda_{\delta})}, \quad 0 \leq t' \leq t \leq T, \\
\int_0^t c(t, z_{\delta}, t', z'_{\delta})^2 \, dt' &\lesssim \delta^6 \left\| (-\Delta_{\delta})^{-1/2} z^{(\delta)} \right\|_{L^2(\Lambda_{\delta})}^2 \left\| z'^{(\delta)} \right\|_{L^2(\Lambda_{\delta})}^2.
\end{align*}
\end{lemma}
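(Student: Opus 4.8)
The plan is to reduce both bounds to a single telescoping identity for the rescaled heat semigroup. Since $W$ is cylindrical on $H=L^2(\Lambda)$ and $\bar X(t)=\int_0^t S(t-s)\,dW(s)$ (with $\theta=1$ throughout this subsection), the It\^o isometry gives, for $0\le t'\le t\le T$ and $z,z'\in L^2(\Lambda)$,
\[
c(t,z,t',z')=\int_0^{t'}\big\langle S(t-s)z,S(t'-s)z'\big\rangle\,ds=\int_0^{t'}\big\langle S(t+t'-2s)z,z'\big\rangle\,ds ,
\]
where we used that $S$ is self-adjoint and forms a semigroup. Substituting $z=z_\delta$, $z'=z'_\delta$, applying the scaling identity $S(\tau)z_\delta=\big(S_\delta(\tau\delta^{-2})z\big)_\delta$ from Lemma \ref{lemma:15ACP}(ii) together with the elementary isometry $\langle u_\delta,v_\delta\rangle_{L^2(\Lambda)}=\langle u,v\rangle_{L^2(\Lambda_\delta)}$, and changing variables $r=(t+t'-2s)\delta^{-2}$, one obtains
\[
c(t,z_\delta,t',z'_\delta)=\frac{\delta^2}{2}\int_{(t-t')\delta^{-2}}^{(t+t')\delta^{-2}}\big\langle S_\delta(r)z,z'\big\rangle_{L^2(\Lambda_\delta)}\,dr .
\]

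The key step, and the point at which care is needed, is to write $z=(-\Delta_\delta)^{1/2}z^{(\delta)}$ and $z'=(-\Delta_\delta)^{1/2}z'^{(\delta)}$, so that $\langle S_\delta(r)z,z'\rangle_{L^2(\Lambda_\delta)}=\langle(-\Delta_\delta)S_\delta(r)z^{(\delta)},z'^{(\delta)}\rangle_{L^2(\Lambda_\delta)}$, and then to integrate in $r$ \emph{before} estimating: since $\frac{d}{dr}S_\delta(r)=\Delta_\delta S_\delta(r)$, the $r$-integral telescopes and
\[
c(t,z_\delta,t',z'_\delta)=\frac{\delta^2}{2}\,\Big\langle\big(S_\delta\big((t-t')\delta^{-2}\big)-S_\delta\big((t+t')\delta^{-2}\big)\big)z^{(\delta)},\,z'^{(\delta)}\Big\rangle_{L^2(\Lambda_\delta)} .
\]
This difference-of-semigroups representation is what makes the argument work: a naive pointwise bound $|\langle S_\delta(r)z,z'\rangle|\le M_1 r^{-1}\|z^{(\delta)}\|\,\|z'^{(\delta)}\|$ followed by $\int dr/r$ would produce a spurious $\log(1/\delta)$ factor, which the telescoping avoids. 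The first assertion of the lemma then follows at once from Cauchy--Schwarz and the contractivity $\|S_\delta(r)\|_{L^2(\Lambda_\delta)\to L^2(\Lambda_\delta)}\le1$. (One small technical point: to justify the telescoping down to $r=0$, relevant when $t'=t$, one uses that $r\mapsto S_\delta(r)z^{(\delta)}$ is absolutely continuous on $[0,\infty)$, which follows from $\|(-\Delta_\delta)S_\delta(r)z^{(\delta)}\|=\|(-\Delta_\delta)^{1/2}S_\delta(r)z\|\le C\,r^{-1/2}\|z\|$, cf.\ Proposition \ref{prop:17ACP}(i); for $t'<t$ the lower limit is strictly positive and nothing is needed.)

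For the second assertion, apply Cauchy--Schwarz inside the inner product and expand $z^{(\delta)}=\sum_k\langle z^{(\delta)},\psi_k\rangle\psi_k$ in the Dirichlet eigenbasis $(\psi_k)$ of $-\Delta_\delta$ on $\Lambda_\delta$ with eigenvalues $\nu_k>0$, obtaining
\[
\int_0^t c(t,z_\delta,t',z'_\delta)^2\,dt'\le\frac{\delta^4}{4}\,\|z'^{(\delta)}\|_{L^2(\Lambda_\delta)}^2\sum_k\langle z^{(\delta)},\psi_k\rangle^2\int_0^t\Big(e^{-\nu_k(t-t')\delta^{-2}}-e^{-\nu_k(t+t')\delta^{-2}}\Big)^2 dt' .
\]
Substituting $u=t-t'$ and using $0\le e^{-\nu_k(2t-u)\delta^{-2}}\le e^{-\nu_k u\delta^{-2}}$ for $u\in[0,t]$, the inner integral is dominated by $\int_0^\infty e^{-2\nu_k u\delta^{-2}}\,du=\delta^2/(2\nu_k)$, so the sum reconstitutes $\frac{\delta^2}{2}\sum_k\nu_k^{-1}\langle z^{(\delta)},\psi_k\rangle^2=\frac{\delta^2}{2}\,\|(-\Delta_\delta)^{-1/2}z^{(\delta)}\|_{L^2(\Lambda_\delta)}^2$, and we reach $\int_0^t c(t,z_\delta,t',z'_\delta)^2\,dt'\lesssim\delta^6\|(-\Delta_\delta)^{-1/2}z^{(\delta)}\|_{L^2(\Lambda_\delta)}^2\|z'^{(\delta)}\|_{L^2(\Lambda_\delta)}^2$, as claimed. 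The genuinely delicate point is thus the telescoping step; once the difference-of-semigroups representation of $c(t,z_\delta,t',z'_\delta)$ is in hand, both estimates reduce to contractivity of $S_\delta$ and an elementary spectral computation.
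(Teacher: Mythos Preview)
The paper does not give its own proof of this lemma: it is listed in Section~\ref{subsection:resultsACP} among the results taken over from \cite{ACP} (specialised to $\gamma=0$, $d=1$, $B=I$), and the paper explicitly states that in that section it only proves Lemma~\ref{lemma:upperbounds}. So there is nothing in the paper to compare against, and your argument must be judged on its own merits.

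Your proof is correct. The covariance identity via the It\^o isometry and the semigroup property, the scaling via Lemma~\ref{lemma:15ACP}(ii) and the unitarity $\langle u_\delta,v_\delta\rangle_{L^2(\Lambda)}=\langle u,v\rangle_{L^2(\Lambda_\delta)}$, and the change of variables to reach
\[
c(t,z_\delta,t',z'_\delta)=\frac{\delta^2}{2}\int_{(t-t')\delta^{-2}}^{(t+t')\delta^{-2}}\langle S_\delta(r)z,z'\rangle_{L^2(\Lambda_\delta)}\,dr
\]
are all fine. The telescoping step is the heart of the matter, and you handle it cleanly: writing $\langle S_\delta(r)z,z'\rangle=\langle(-\Delta_\delta)S_\delta(r)z^{(\delta)},z'^{(\delta)}\rangle=-\frac{d}{dr}\langle S_\delta(r)z^{(\delta)},z'^{(\delta)}\rangle$ gives the difference-of-semigroups formula, and the first bound is immediate from contractivity. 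Your remark about why the naive $M_1 r^{-1}$ bound would lose a logarithm is apt. A minor comment on the endpoint $r=0$: your $O(r^{-1/2})$ bound for $\|(-\Delta_\delta)S_\delta(r)z^{(\delta)}\|$ is more than enough, but in fact the scalar integrand $\langle S_\delta(r)z,z'\rangle$ is already uniformly bounded by $\|z\|\,\|z'\|$, so the map $r\mapsto\langle S_\delta(r)z^{(\delta)},z'^{(\delta)}\rangle$ is globally Lipschitz on $[0,\infty)$ and FTC applies without any singularity to worry about.

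For the second bound, your spectral computation is correct: the estimate $(e^{-\nu_k u\delta^{-2}}-e^{-\nu_k(2t-u)\delta^{-2}})^2\le e^{-2\nu_k u\delta^{-2}}$ on $u\in[0,t]$ and the resulting $\int_0^\infty e^{-2\nu_k u\delta^{-2}}\,du=\delta^2/(2\nu_k)$ exactly recovers the factor $\|(-\Delta_\delta)^{-1/2}z^{(\delta)}\|_{L^2(\Lambda_\delta)}^2$. This is essentially the argument one would expect for Lemma~22 in \cite{ACP} in the present setting, and there are no gaps.
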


\begin{lemma}[Lemma S.2 in \cite{ACP}] \label{lemma:S2ACP}
Grant Assumption \ref{ass:L}. Let $1 < q < \infty$, $r \leq 2$. Then, as $\delta \rightarrow 0$, 
\begin{itemize}
\item[(i)] $(-\Delta_{\delta})^{-r/2} \partial_x K \rightarrow (-\Delta_{0})^{-r/2 + 1} L$ in $L^p(\R)$. 
\item[(ii)] $(-\Delta_{\delta})^{-r/2} \Phi_k(\delta \cdot) \partial_x K \rightarrow \Phi_k(0)(-\Delta_{0})^{-r/2 + 1} L$ in $L^2(\R)$. Moreover, \\ $\sup_{0 < \delta \leq 1} \left\| (-\Delta_{\delta})^{-r/2} \Phi_k(\delta\cdot) \partial_{x} K \right\|_{L^2(\Lambda_{\delta})} \lesssim \lambda_{k}^{r'/2}$ for $r' > r$.
\end{itemize}
\end{lemma}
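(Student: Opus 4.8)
Lemma \ref{lemma:S2ACP} is the adaptation to $\gamma=0$ of Lemma S.2 in \cite{ACP}, so one may simply quote it; I sketch a direct argument. The plan is to use Assumption \ref{ass:L} to trade the derivative $\partial_x$ for one power of the Dirichlet Laplacian, and then feed the result into the scaling and convergence lemmas already recorded (Lemmas \ref{lemma:16ACP}, \ref{lemma:18ACP}, \ref{lemma:20ACP} and Proposition \ref{prop:17ACP}). Since $K=\partial_x L$ with $L\in H^3(\R)$ supported in $\Lambda_{\delta'}$, for $\delta\le\delta'$ the functions $L$ and $L'$ vanish near $\partial\Lambda_\delta$, so $L\in\Dom(\Delta_\delta)$ and $\Delta_\delta L$ coincides with the classical second derivative; hence $\partial_x K=\partial_{xx}L=\Delta_\delta L$ on $\Lambda_\delta$ (and likewise on $\R$ with $\Delta_0$), which gives, up to an overall sign,
\begin{equation*}
(-\Delta_\delta)^{-r/2}\partial_x K=(-\Delta_\delta)^{-r/2+1}L,\qquad\delta\le\delta'.
\end{equation*}
Thus (i) reduces to $(-\Delta_\delta)^{1-r/2}L\to(-\Delta_0)^{1-r/2}L$ in $L^p(\R)$, which I would establish according to the sign of $h:=1-r/2$: for $h\le0$ ($r\ge2$, trivial at $r=2$) by the argument underlying Lemma \ref{lemma:20ACP}(ii) applied to the compactly supported $L\in L^1(\R)\cap L^p(\R)$; for $0<h\le1$ ($0\le r<2$) from Lemma \ref{lemma:18ACP}(i), since $2\lceil h\rceil=2$ and the one-dimensional embedding $H^3(\R)\hookrightarrow W^{2,\infty}(\R)\subset W^{2,p}(\Lambda_{\delta'})$ places $L$ in its regularity class (this also yields $\sup_{0<\delta\le1}\|(-\Delta_\delta)^hL\|_{L^p(\R)}<\infty$); and for $h>1$ ($r<0$) by writing $h=m+\beta$ with $m\in\N$, $\beta\in[0,1)$, iterating the identity $\partial_{xx}=\Delta_\delta$ on $L$ and applying the previous step to $(-\Delta_\delta)^\beta(\partial_x^{2m}L)$, which costs the mild extra hypothesis $L\in H^{2m+2}$ that is available in all the applications (there $r$ stays near $1$).

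For (ii) I would isolate the oscillating factor, $\Phi_k(\delta x)=\Phi_k(0)+\bigl(\Phi_k(\delta x)-\Phi_k(0)\bigr)$, so that $(-\Delta_\delta)^{-r/2}\bigl(\Phi_k(\delta\cdot)\partial_x K\bigr)$ splits as $\Phi_k(0)\,(-\Delta_\delta)^{-r/2}\partial_x K$ plus $(-\Delta_\delta)^{-r/2}\bigl((\Phi_k(\delta\cdot)-\Phi_k(0))\partial_x K\bigr)$. The first piece tends to $\Phi_k(0)(-\Delta_0)^{-r/2+1}L$ by (i); the remainder is supported in the support of $K$, and since $\Phi_k(\delta x)-\Phi_k(0)\to0$ uniformly there with all $x$-derivatives of $\Phi_k(\delta\cdot)$ bounded uniformly in $\delta\le1$, it tends to $0$ in $H^1(\R)$, whence to $0$ in $L^2(\R)$ by the uniform smoothing estimates of Lemmas \ref{lemma:18ACP} and \ref{lemma:16ACP}; this proves the convergence. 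For the uniform-in-$\delta$ bound I would use the Leibniz identity
\begin{equation*}
\Phi_k(\delta\cdot)\partial_{xx}L=\Delta_\delta\bigl(\Phi_k(\delta\cdot)L\bigr)-2\delta\,\Phi_k'(\delta\cdot)L'-\delta^2\Phi_k''(\delta\cdot)L,
\end{equation*}
legitimate on $\Lambda_\delta$ because $\Phi_k(\delta\cdot)L$ is supported strictly inside $\Lambda_{\delta'}$ and so lies in $\Dom(\Delta_\delta)$, apply $(-\Delta_\delta)^{-r/2}$, and bound the three resulting terms by Lemma \ref{lemma:18ACP}, Proposition \ref{prop:17ACP} and Lemma \ref{lemma:16ACP}: the leading term $(-\Delta_\delta)^{1-r/2}\bigl(\Phi_k(\delta\cdot)L\bigr)$ is controlled by a Sobolev norm of $\Phi_k(\delta\cdot)L$ of order at most $2$, each derivative falling on $\Phi_k(\delta\cdot)$ costing a factor $\delta k\le k=\lambda_k^{1/2}$, while the two lower-order terms carry explicit powers $\delta,\delta^2$ that offset the $k,k^2$ produced by $\Phi_k'$ and $\Phi_k''$; summing the contributions gives $\lesssim\lambda_k^{r'/2}$ for every $r'>r$, uniformly in $\delta\in(0,1]$.

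The step I expect to be the main obstacle is this last uniform-in-$\delta$, $\lambda_k$-tracking bound in (ii): one must commute $(-\Delta_\delta)^{-r/2}$ past the $\delta$-rescaled, $k$-dependent multiplier $\Phi_k(\delta\cdot)$ without violating the Dirichlet boundary condition --- which is precisely why one first passes to $\Phi_k(\delta\cdot)L$, a function genuinely in $\Dom(\Delta_\delta)$ by virtue of its support strictly inside $\Lambda_{\delta'}$ --- and without losing uniformity in $\delta$, which survives only because every derivative of $\Phi_k(\delta\cdot)$ is paired with a compensating power of $\delta\le1$. By contrast, once the trade $\partial_x\leftrightarrow\Delta_\delta$ is in place, both convergence assertions reduce cleanly to Lemmas \ref{lemma:18ACP} and \ref{lemma:20ACP}.
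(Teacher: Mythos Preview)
The paper does not prove this lemma at all: in Section~\ref{subsection:resultsACP} it is listed among the results imported verbatim from \cite{ACP}, with the explicit remark that a proof is supplied only for Lemma~\ref{lemma:upperbounds}. Your opening sentence --- that one may simply quote Lemma~S.2 of \cite{ACP} --- is therefore exactly the paper's ``proof'', and your sketch goes beyond what the paper provides; the outline (trading $\partial_x K=\Delta_\delta L$ via Assumption~\ref{ass:L}, then invoking Lemmas~\ref{lemma:18ACP} and \ref{lemma:20ACP} for convergence, and a Leibniz expansion for the uniform $\lambda_k$-bound) is the natural route and matches how the original supplement argues.
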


\begin{lemma}[Lemma S.3 in \cite{ACP}] \label{lemma:upperbounds} For any small $\eps \in (0, 1/2)$ we introduce the function  $g_{3/2 - \eps}(t)$ as in \eqref{continua}. Then for all $0 < t \leq T$, $k \geq 1$, $\P$-a.s.:
\begin{itemize}
\item[(i)] $\left| \widetilde{X}_{\delta}^{\Delta}(t) \right| \lesssim g_{3/2 - \eps}(t) \delta^{- 1/2 - \eps}, \, \left| \left\langle \widetilde{X}^2(t), \partial_x K_{\delta} \right\rangle \right| \lesssim g_{3/2 - \eps}(t) \, \delta^{1/2 - \eps}, \, \left| \left\langle \widetilde{X}(t), \Phi_k \right\rangle \right| \lesssim g_{3/2 - \eps}(t) \, \lambda_k^{-3/4 + \eps}$.
\end{itemize}
For any small $\eps \in (0, 1/2)$, uniformly in $0 \leq t \leq T$, $k \geq 1$, $r \leq 1$, $\P$-a.s.:
\begin{itemize}
\item[(ii)] $\left| \bar{X}_{\delta}^{\Delta}(t) \right| \lesssim \delta^{-1 - \eps}, \, \left| \left\langle X^2(t), \partial_x K_{\delta} \right\rangle \right| \lesssim \delta^{- \eps}$,
\item[(iii)] $\left| \left\langle \Phi_k^2, \partial_x K_{\delta} \right\rangle \right| \lesssim \lambda_k^r \delta^{r - 1/2 - \eps}$.
\end{itemize}
\end{lemma}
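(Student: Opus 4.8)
The proof has two ingredients. The first is the regularity already established: by Proposition \ref{prop:regulXtilde}, for $t\in(0,T]$ one has $\widetilde X(t)\in H^{3/2-\eps}$ with $\|\widetilde X(t)\|_{3/2-\eps}\lesssim g_{3/2-\eps}(t)$, hence $\widetilde X(t)\in C^{0,1-\eps}(\Lambda)$ with Hölder seminorm $\lesssim g_{3/2-\eps}(t)$, and moreover $\sup_{t\in[0,T]}\|\widetilde X(t)\|_\infty<\infty$ $\P$-a.s.; by Proposition \ref{prop:regulXbar}, $\bar X\in C([0,T];W^{s,p}(\Lambda))$ for all $s<1/2$, $p<\infty$, hence $\bar X\in C([0,T];C^{0,1/2-\eps}(\Lambda))$ $\P$-a.s.; and by Proposition \ref{prop:algebra2} together with Step 4 of the proof of Proposition \ref{prop:regulXtilde}, $\bar X^2,X^2\in C([0,T];H^{1/2-\eps})$ $\P$-a.s. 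The second ingredient is two complementary devices for bounding a localized pairing. One is the scaling Lemma \ref{lemma:19ACP} (with Lemmas \ref{lemma:15ACP}, \ref{lemma:16ACP}, \ref{lemma:18ACP} and \ref{lemma:S2ACP}, the last using Assumption \ref{ass:L}), which turns $\langle u,\Delta K_\delta\rangle$ or $\langle u,\partial_x K_\delta\rangle$ into a power of $\delta$ times a Sobolev norm of $u$ and a norm of a fractional power of $K$ on $\Lambda_\delta$ that is bounded uniformly in $\delta$. The other, elementary, uses that $K$ has compact support, so $\int_\R\partial_x K_\delta=\int_\R\Delta K_\delta=0$: for continuous $f$ one replaces $f(t,\cdot)$ by $f(t,\cdot)-f(t,x_0)$ in the pairing, bounds the increment by a Hölder seminorm and substitutes $x=\delta y+x_0$; since $\Delta K_\delta(x)=\delta^{-5/2}K''(\delta^{-1}(x-x_0))$, $\partial_x K_\delta(x)=\delta^{-3/2}K'(\delta^{-1}(x-x_0))$, and $|y|^{\alpha}K',|y|^{\alpha}K''\in L^1(\R)$, the $\delta$-rate then falls out explicitly.

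\emph{Part (i).} I would estimate $\widetilde X_\delta^\Delta(t)=\langle\widetilde X(t),\Delta K_\delta\rangle$ and $\langle\widetilde X^2(t),\partial_x K_\delta\rangle$ by the mean-zero device: bounding the increment of $\widetilde X(t)$ by $g_{3/2-\eps}(t)|x-x_0|^{1-\eps}$, the substitution produces the exponents $(1-\eps)-\tfrac52+1=-\tfrac12-\eps$ and $(1-\eps)-\tfrac32+1=\tfrac12-\eps$, and for the square one inserts also $\|\widetilde X(t)\|_\infty\lesssim 1$. This gives $|\widetilde X_\delta^\Delta(t)|\lesssim g_{3/2-\eps}(t)\delta^{-1/2-\eps}$ and $|\langle\widetilde X^2(t),\partial_x K_\delta\rangle|\lesssim g_{3/2-\eps}(t)\delta^{1/2-\eps}$. (The first bound equally follows from Lemma \ref{lemma:19ACP} applied to $u=\widetilde X(t)\in H^{3/2-\eps}$ after $\Delta K_\delta=\delta^{-2}(\Delta_\delta K)_\delta$.) For the third bound, after the translation $x\mapsto x+x_0$ one has $\langle\widetilde X(t),\Phi_k\rangle=\langle\widetilde X(t),e_k\rangle$ in $L^2(\Lambda)$, and extracting one term of $\sum_k\lambda_k^{3/2-\eps}\langle\widetilde X(t),e_k\rangle^2=\|\widetilde X(t)\|_{3/2-\eps}^2\lesssim g_{3/2-\eps}(t)^2$ gives $|\langle\widetilde X(t),\Phi_k\rangle|\lesssim g_{3/2-\eps}(t)\,\lambda_k^{-3/4+\eps}$.

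\emph{Parts (ii) and (iii).} For $\bar X_\delta^\Delta(t)$ the mean-zero device applies as before, now with the uniform-in-$t$ Hölder seminorm of $\bar X(t)\in C^{0,1/2-\eps}$, the exponents being $(\tfrac12-\eps)-\tfrac52+1=-1-\eps$, so $|\bar X_\delta^\Delta(t)|\lesssim\delta^{-1-\eps}$ uniformly. For $\langle X^2(t),\partial_x K_\delta\rangle$ I would split $X=\bar X+\widetilde X$ and apply the device to $X^2(t)-X^2(t,x_0)$: the increment of $X(t)$ is $\lesssim\|X(t)\|_\infty\bigl([\bar X(t)]_{C^{0,1/2-\eps}}|x-x_0|^{1/2-\eps}+[\widetilde X(t)]_{C^{0,1-\eps}}|x-x_0|^{1-\eps}\bigr)$, the two pieces rescaling to $\delta^{-\eps}$ (the $\bar X$-part, uniform in $t$) and $\delta^{1/2-\eps}$ with weight $g_{3/2-\eps}(t)$ (the $\widetilde X$-part); this is exactly what is needed after multiplying by $\widetilde X_\delta^\Delta(t)$ in Lemma \ref{lemma:U2}. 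Finally $\langle\Phi_k^2,\partial_x K_\delta\rangle$ I would estimate either by one integration by parts against $K$ (licit by compact support), using $\Phi_k^2=1-\cos(2\pi k(\cdot+x_0))$ and $\|\partial_x\Phi_k^2\|_\infty\lesssim\lambda_k^{1/2}$, or by Lemma \ref{lemma:19ACP} with $\|\Phi_k^2\|_{H^r}\lesssim\lambda_k^{r/2}$ for $r\le1$ (the algebra inequality of Proposition \ref{prop:algebra}) and the kernel bound of Lemma \ref{lemma:S2ACP}(ii); either route gives the family $|\langle\Phi_k^2,\partial_x K_\delta\rangle|\lesssim\lambda_k^r\delta^{r-1/2-\eps}$, $r\le1$.

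The main obstacle is the bound on $\langle X^2(t),\partial_x K_\delta\rangle$. Unlike $\widetilde X$, the solution $X$ only carries the white-noise Sobolev regularity $X^2\in C([0,T];H^{1/2-\eps})$ inherited from $\bar X^2$, so a direct appeal to the scaling Lemma \ref{lemma:19ACP} does not produce the sharp $\delta$-rate; one must peel off $\bar X^2$ and combine the cancellation $\int_\R\partial_x K_\delta=0$ with the uniform-in-$t$ Hölder regularity of $\bar X$, while the remaining $\widetilde X$-dependent terms are handled through the excess regularity of Proposition \ref{prop:regulXtilde}. A related delicate point is the regime $t\to0$, where $\widetilde X(t)$ degenerates to the merely continuous datum $X_0$ and $\bar X(t)$ to $0$: on the $\widetilde X$-dependent terms one therefore carries the singular but square-integrable weight $g_{3/2-\eps}\in L^2([0,T])$ (controlling, if needed, the cross term via the uniformly Hölder part $\widetilde Y$ of the splitting $\widetilde X=S_\theta(\cdot)X_0+\widetilde Y$, the smoothing bound for $S_\theta(t)X_0$ and the vanishing of $\|\bar X(t)\|_\infty$ as $t\to0$), which is precisely why the weighted formulation appears in (i) and in the $\widetilde X$-parts of (ii).
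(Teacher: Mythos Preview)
Your argument is correct and complete in all essentials, but the route differs from the paper's. The paper works almost entirely through the abstract duality estimate of Lemma~\ref{lemma:19ACP}: for $\widetilde X_\delta^\Delta(t)$ it applies that lemma with $p=q=2$, $r=3/2-\eps$, to $\delta^{-2}(\Delta_\delta K)_\delta$, invoking Lemma~\ref{lemma:20ACP} to bound $\|(-\Delta_\delta)^{1/4+\eps/2}K\|_{L^2(\Lambda_\delta)}$; for the remaining bounds it first upgrades the Sobolev regularity of the \emph{squares}, using the algebra property of $W^{s,p}$ for $ps>1$ to obtain $\widetilde X^2\in C((0,T];H^{3/2-\eps})$ with the same weight $g_{3/2-\eps}$ and $\bar X^2\in C([0,T];W^{1/2-\eps,p})$ for large $p$, and then reapplies Lemma~\ref{lemma:19ACP}. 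Your alternative ``mean-zero device'' --- subtract the value at $x_0$, exploit $\int K'=\int K''=0$, and rescale --- is more elementary and pointwise; it bypasses the fractional-Laplacian machinery on $\Lambda_\delta$ and uses only the H\"older embeddings $H^{3/2-\eps}\subset C^{0,1-\eps}$ and $W^{1/2-\eps,p}\subset C^{0,1/2-\eps}$ already available. The two approaches yield identical $\delta$-exponents; the paper's is more in keeping with the \cite{ACP} framework and handles all items uniformly through one lemma, while yours is transparent about where each power of $\delta$ comes from and makes the $t\to 0$ issue visible rather than hidden in a Sobolev norm.

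Two small remarks. First, your bound for $\langle X^2(t),\partial_x K_\delta\rangle$ is $\delta^{-\eps}+g_{3/2-\eps}(t)\,\delta^{1/2-\eps}$ rather than the stated uniform $\delta^{-\eps}$; you correctly note this suffices for Lemma~\ref{lemma:U2}, and in fact the paper's own route through $W^{1/2-\eps,p}$ has the same weight on the $\widetilde X$-dependent pieces (since only $\bar X^2$ is placed in $C([0,T];W^{1/2-\eps,p})$), so the discrepancy is one of phrasing, not substance. Second, for part~(iii) your two routes both give bounds that imply the stated family $\lambda_k^r\delta^{r-1/2-\eps}$, but the parameter matching in the Lemma~\ref{lemma:19ACP} route needs $\|\Phi_k^2\|_{s,p}\lesssim\lambda_k^{s/2}$ together with the uniform-in-$\delta$ bound on $\|(-\Delta_\delta)^{-s/2}K'\|_{L^q}$ from Lemma~\ref{lemma:S2ACP}(i); the paper simply says these remaining inequalities ``are shown analogously'' and defers to \cite{ACP}.
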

\begin{proof} The proof is based on Lemma \ref{lemma:19ACP} and the regularity results on $\bar{X}$ and $\widetilde{X}$ established in Propositions \ref{prop:regulXbar} and \ref{prop:regulXtilde}.
  
For any small $\eps \in (0, 1/2)$ and any $p \geq 2$, we have, $\P$-a.s.,
\begin{align}
&\bar{X} \in C([0,T]; W^{1/2 - \eps, p}(\Lambda)), \notag \\
&\widetilde{X} \in C((0,T]; H^{3/2 - \eps}), \quad \text{with} \quad \|\widetilde{X}(t) \|_{3/2 - \eps} \leq C \, g_{3/2 - \eps}(t), \label{eq:regularity}
\end{align}
for $0 < t \leq T$, some (random) constant $C > 0$ and the function $g_{3/2 - \eps}(t)$ from Proposition \ref{prop:regulXtilde}.

For the first inequality, we use Lemma \ref{lemma:15ACP}(i) for scaling and Lemma \ref{lemma:19ACP} with $p = q = 2$, $r=3/2 - \eps$, applied to $ z_{\delta} = \delta^{-2} (\Delta_{\delta} K)_{\delta}$. We obtain
\begin{align*}
\left| \widetilde{X}_{\delta}^{\Delta}(t) \right| &= \left| \left\langle \widetilde{X}(t), \Delta K_{\delta} \right\rangle \right| = \left| \left\langle \widetilde{X}(t), \delta^{-2} (\Delta_{\delta} K)_{\delta} \right\rangle \right| \lesssim \delta^{-2} \delta^{3/2 - \eps} \| \widetilde{X}(t) \|_{3/2 - \eps} \| (- \Delta_{\delta})^{1/4 + \eps/2} K \|_{L^2(\Lambda_{\delta})} \\
&\lesssim \delta^{- 1/2 - \eps} g_{3/2 - \eps}(t).
\end{align*}
The finiteness of the factor $\| (- \Delta_{\delta})^{1/4 + \eps/2} K \|_{L^2(\Lambda_{\delta})}$ is provided by Lemma \ref{lemma:20ACP}. This gives the result, but note that it is a different result that would come from Lemma S.3 in \cite{ACP} just by allowing $\gamma = 0$. (Also in the terms of factor $\delta^{- 1/2 - \eps}$.)

The remaining inequalities are shown analogously. We use a classical result: $W^{s, p}(\Lambda)$ is an algebra for $p>1/s$ (see \cite{stri}). It follows that $\widetilde{X}^2$ still belongs to $C((0,T]; H^{3/2 - \eps})$ with $\|\widetilde{X}^2(t) \|_{3/2 - \eps} \leq C \, g_{3/2 - \eps}(t)$ and for $p > 2$ large enough (depending on $\eps$) we also have
$$
\bar{X}^2 \in C([0,T]; W^{1/2 - \eps, p}(\Lambda)).
$$
This allows to perform the estimates as in \cite{ACP} and to get  the required inequalities involving $\bar X$ and $X$.
\end{proof} 

For $x, x' \in \Lambda$ and $0 \leq t' \leq t \leq T$, we set
\begin{align*}
c_{t,t'}^{\Delta}(x) &:=\E [\bar{X}_{\delta}^{\Delta}(t) \bar{X}(t',x)], \quad &c_{t,t'}(x,x') &:= \E[\bar{X}(t,x) \bar{X}(t',x')], \\
c_{t,t'}^{(1)}(x,x') &:= c_{t',t}^{\Delta}(x) c_{t,t'}^{\Delta}(x'), \quad &c_{t,t'}^{(2)}(x,x') &:=c_{t,t}^{\Delta}(x) c_{t',t'}^{\Delta}(x'). 
\end{align*}

\begin{lemma}[Lemma S.5 in \cite{ACP}] \label{lemma:S5ACP}
The following assertions hold true, with $c_{t, t} = c_{t, t}(x,x)$: 
\begin{itemize}
\item[(i)] $| \left\langle c_{t,t}, \partial_x K_{\delta} \right\rangle | \lesssim \delta^{-\eps}$ and $\left| \int_0^t \left\langle c_{t,t'}, \partial_x K_{\delta} \right\rangle \, dt' \right| \lesssim \delta^{1/2 - \eps}$.
\item[(ii)] $\left| \int_{\Lambda^2} c_{t,t'}(x,x')^{2} \partial_x K_{\delta}(x) \partial_x K_{\delta}(x') \, dx dx'\right| \lesssim \delta^{-\eps}$.
\item[(iii)] $\left| \left\langle c_{t,t}^{\Delta} c_{t',t}^{\Delta}, \partial_x K_{\delta} \right\rangle \right| \lesssim \delta^{-1}$ and $\left| \left\langle c_{t',t'}^{\Delta} c_{t,t'}^{\Delta}, \partial_x K_{\delta} \right\rangle \right| \lesssim \delta^{-1}$.	
\item[(iv)] For $i=1, 2$, as $\delta \rightarrow 0$,
$$
\delta^2 \int_{\Lambda^{2}} \int_0^T \int_0^t c_{t,t'}(x,x') c_{t,t'}^{(i)}(x,x') \partial_x K_{\delta}(x) \partial_x K_{\delta}(x')\, dt' dt d(x,x') \rightarrow 0. 
$$	
\end{itemize}
\end{lemma}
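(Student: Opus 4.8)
The statement is the $\gamma = 0$, $d = 1$, $B = I$ specialisation of Lemma S.5 in \cite{ACP}, and the plan is to adapt that proof; all the objects appearing are deterministic --- they are correlation functions of the centred Gaussian field $\bar X$ --- so no probabilistic input is needed beyond Gaussian (Wick) bookkeeping. First I would record the representation $\Cov(\langle \bar X(t), z\rangle, \langle \bar X(t'), z'\rangle) = \langle Q_{t,t'} z, z'\rangle$ with $Q_{t,t'} = \int_0^{t'} S_\theta(t-s) S_\theta(t'-s)\,ds$ for $0 \le t' \le t$. Thus the functions $x \mapsto c_{t,t}(x,x)$, $x \mapsto c_{t,t'}(x,x)$ and $x \mapsto \int_0^t c_{t,t'}(x,x)\,dt'$ are diagonals of kernels of operators obtained by composing the analytic semigroup with at most one factor of $(-\Delta)^{-1}$, whereas $c_{t,t'}^{\Delta} = Q_{t,t'}(\Delta K_\delta)$ (up to the order of the time labels) inherits the concentration of $\Delta K_\delta$ around $x_0$. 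From the smoothing of $S_\theta$ and the regularity of $\bar X$ in Proposition \ref{prop:regulXbar} one obtains, uniformly in $0 \le t' \le t \le T$, that $x \mapsto c_{t,t}(x,x) \in H^{3/2-\eps}$ and $x \mapsto \int_0^t c_{t,t'}(x,x)\,dt' \in H^{5/2-\eps}$, with norms bounded uniformly in $t$.

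For (i)--(iii) I would, term by term, rescale the localised kernels with Lemmas \ref{lemma:15ACP}--\ref{lemma:16ACP} and estimate by the duality inequality Lemma \ref{lemma:19ACP}. The key observation is that a bare use of Lemma \ref{lemma:19ACP} on $c_{t,t}$ paired with $\partial_x K_\delta$ only gives $\delta^{-1/2-\eps}$; the missing powers are recovered through Assumption \ref{ass:L}, writing $\partial_x K_\delta = \delta^{-1}(\Delta_\delta L)_\delta$, so that Lemma \ref{lemma:19ACP} may be applied with the large smoothness index $r = 3/2-\eps$ (respectively $r = 5/2-\eps$ for the time-integrated term), the residual factor $\|(-\Delta_\delta)^{-r/2}\Delta_\delta L\|_{L^2(\Lambda_\delta)} = \|(-\Delta_\delta)^{1 - r/2}L\|_{L^2(\Lambda_\delta)}$ being bounded uniformly in $\delta$ by Lemmas \ref{lemma:18ACP}(ii) and \ref{lemma:20ACP}; this yields the claimed exponents $\delta^{-\eps}$ and $\delta^{1/2-\eps}$. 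For (iii) I would bound $|c_{t,t'}^{\Delta}(x)| \le (\E\,\bar X_\delta^{\Delta}(t)^2)^{1/2}(\E\,\bar X(t',x)^2)^{1/2}$, control $\E\,\bar X_\delta^{\Delta}(t)^2 \lesssim \delta^{-2}$ by Lemma \ref{lemma:22ACP} after writing $\Delta K_\delta = \delta^{-2}(\Delta_\delta K)_\delta$ and using that $(-\Delta_\delta)^{-1/2}\Delta_\delta K = (-\Delta_\delta)^{1/2}K$ is uniformly bounded (Lemma \ref{lemma:18ACP}), and gain back a factor $\delta^{3/2}$ by invoking $K = L'$ on the outer $\partial_x K_\delta$. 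Part (ii) is of Hilbert--Schmidt type: after the same rescaling it is dominated by the second estimate in Lemma \ref{lemma:22ACP} with $z = z' = |\partial_x K_\delta|$ (to handle the sign).

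For (iv) the prefactor $\delta^2$ must beat a quantity of size $o(\delta^{-2})$. After rescaling the triple integrand I would pass to the limit by dominated convergence in $(t,t')$ on $\{0 \le t' \le t \le T\}$, using the convergences $(-\Delta_\delta)^{-r/2}\partial_x K \to (-\Delta_0)^{-r/2+1}L$ and $(-\Delta_\delta)^{-r/2}\Phi_k(\delta\,\cdot)\,\partial_x K \to \Phi_k(0)(-\Delta_0)^{-r/2+1}L$ from Lemma \ref{lemma:S2ACP}, the convergence $S_\delta(t\delta^{-2})(z|_{\Lambda_\delta}) \to e^{t\Delta_0}z$ from Proposition \ref{prop:17ACP}(ii), and the uniform-in-$\delta$ bounds already established in (i)--(iii); the limit then evaluates to $0$ exactly as in \cite{ACP}.

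The main obstacle is the weak spatial regularity of $\bar X$ at $\gamma = 0$ --- only $W^{1/2-\eps,p}$ --- which makes each naive estimate lossy; one must therefore exploit the cancellation encoded in Assumption \ref{ass:L} throughout, and keep the $\eps$-losses strictly below the $\delta$-powers gained. A secondary point to check (cf.\ Remark \ref{altri}) is that the $t'$-integrals, which in \cite{ACP} were rendered summable by $\gamma > 1/4$, remain convergent at $\gamma = 0$; this is the case because here $\gamma$ enters only through the noise covariance, now the identity, and the $x$-regularity of the relevant time-integrated kernels is unaffected.
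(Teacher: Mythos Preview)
The paper does not give its own proof of this lemma: in Section~\ref{subsection:resultsACP} it lists Lemma~\ref{lemma:S5ACP} among the results imported from \cite{ACP} with the specialisation $\gamma=0$, $d=1$, $B=I$, and explicitly states that a proof is written out only for Lemma~\ref{lemma:upperbounds}. So there is nothing to compare against beyond the implicit claim that the original argument of Lemma~S.5 in \cite{ACP} goes through verbatim in this setting.

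Your sketch follows exactly that route --- rescaling via Lemmas~\ref{lemma:15ACP}--\ref{lemma:16ACP}, duality via Lemma~\ref{lemma:19ACP}, the covariance bounds of Lemma~\ref{lemma:22ACP}, and the crucial use of Assumption~\ref{ass:L} to trade $\partial_x K_\delta$ for $\delta^{-2}(\Delta_\delta L)_\delta$ and recover the missing half-powers of $\delta$. This is precisely the machinery of the \cite{ACP} supplement, so your proposal is aligned with the paper's (unstated) proof. One point to be careful with in (iii): the naive Cauchy--Schwarz bound $|c_{t,t'}^\Delta(x)|\lesssim\delta^{-1}$ combined with $\|\partial_x K_\delta\|_{L^1}\sim\delta^{-1/2}$ only gives $\delta^{-5/2}$, so the ``gain back $\delta^{3/2}$'' step really has to exploit the spatial structure of $x\mapsto c_{t,t'}^\Delta(x)$ (which, modulo exponentially small terms, behaves like a multiple of $K_\delta$ itself) rather than just a crude sup-norm bound; in the \cite{ACP} argument this is done by keeping the eigenfunction expansion and using Lemma~\ref{lemma:S2ACP}(ii). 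Your last paragraph on the $t'$-summability at $\gamma=0$ is well taken and matches the spirit of Remark~\ref{altri}.
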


\begin{lemma}[Lemma S.6 in \cite{ACP}] \label{lemma:S6ACP}
For any $0 \leq t, t' \leq T$, we have
\begin{itemize}
\item[(i)] $\left| \E [\bar{X}_{\delta}^{\Delta}(t) \bar{X}_{\delta}^{\Delta}(t')] \right| \lesssim \delta^{-1 -\eps} |t-t'|^{-1/2 + \eps}$, for $t \neq t'$, and $\eps > 0$.  
\item[(ii)] $\E \left[ \left\langle \bar{X}(t), \partial_x K_{\delta} \right\rangle \left\langle \bar{X}(t'), \partial_x K_{\delta} \right\rangle \right] \rightarrow 0$, as $\delta \rightarrow 0$.
\item[(iii)] $\delta \E \left[ \bar{X}_{\delta}^{\Delta}(t') \left\langle \bar{X}(t), \partial_x K_{\delta} \right\rangle \right] \rightarrow 0$, as $\delta \rightarrow 0$.
\end{itemize}
\end{lemma}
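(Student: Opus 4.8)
The plan is to reduce all three identities to the covariance kernel of the stochastic convolution and then treat (i) by a direct spectral computation and (ii)--(iii) by the scaling estimate of Lemma~\ref{lemma:22ACP}. Since $\bar X$ is the centered Gaussian process $\bar X(t)=\int_0^t S_\theta(t-s)\,dW(s)$, the It\^o isometry gives, for $z,z'\in L^2(\Lambda)$,
\[
c(t,z,t',z')=\Cov\big(\langle\bar X(t),z\rangle,\langle\bar X(t'),z'\rangle\big)=\int_0^{t\wedge t'}\langle S_\theta(t-s)z,\,S_\theta(t'-s)z'\rangle\,ds,
\]
so that $\E[\bar X_\delta^\Delta(t)\bar X_\delta^\Delta(t')]=c(t,\Delta K_\delta,t',\Delta K_\delta)$ and, with the scaling convention $\langle\bar X(t),\partial_x K_\delta\rangle=\langle\bar X(t),(\partial_x K)_{\delta,x_0}\rangle$ used in Lemma~\ref{lemma:S2ACP}, the quantities in (ii) and (iii) are $c(t,(\partial_x K)_\delta,t',(\partial_x K)_\delta)$ and $\delta^{-1}c(t',(\Delta_\delta K)_\delta,t,(\partial_x K)_\delta)$ respectively.

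For (i) I would expand $K_\delta$ in the Dirichlet eigenbasis $(\lambda_n,e_n)$ of $-\Delta$ on $\Lambda$. With $Y_n(t)=\langle\bar X(t),e_n\rangle$ one has $\E[Y_n(t)Y_n(t')]=(2\theta\lambda_n)^{-1}(e^{-\theta\lambda_n|t-t'|}-e^{-\theta\lambda_n(t+t')})$ and $\langle e_n,\Delta K_\delta\rangle=-\lambda_n\langle e_n,K_\delta\rangle$, hence
\[
\E[\bar X_\delta^\Delta(t)\bar X_\delta^\Delta(t')]=\frac1{2\theta}\sum_{n\ge1}\lambda_n\langle e_n,K_\delta\rangle^2\big(e^{-\theta\lambda_n|t-t'|}-e^{-\theta\lambda_n(t+t')}\big)\geq 0 .
\]
Bounding the bracket by $e^{-\theta\lambda_n|t-t'|}$ and using the elementary inequality $\lambda_n e^{-\theta\lambda_n\tau}\le C_\eps\lambda_n^{1/2+\eps}\tau^{-1/2+\eps}$ (valid for every $\eps,\tau>0$, since $\sup_{x>0}x^{1/2-\eps}e^{-\theta x\tau}\lesssim\tau^{-1/2+\eps}$) yields
\[
0\le\E[\bar X_\delta^\Delta(t)\bar X_\delta^\Delta(t')]\lesssim |t-t'|^{-1/2+\eps}\sum_{n\ge1}\lambda_n^{1/2+\eps}\langle e_n,K_\delta\rangle^2=|t-t'|^{-1/2+\eps}\,\|K_\delta\|_{H^{1/2+\eps}}^2 .
\]
It remains to note that by scaling of fractional Laplacians (cf. Lemma~\ref{lemma:16ACP}) $\|K_\delta\|_{H^{1/2+\eps}}^2=\delta^{-1-2\eps}\|(-\Delta_\delta)^{1/4+\eps/2}K\|_{L^2(\Lambda_\delta)}^2$, and the last norm stays bounded as $\delta\to0$ by Lemma~\ref{lemma:18ACP}(i) (here $K\in H^2(\R)\subset W^{2,2}(\Lambda_{\delta'})$ and $1/4+\eps/2<1/2$). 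Relabelling $2\eps$ as $\eps$ gives (i).

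For (ii), by the Cauchy--Schwarz inequality it suffices to show $\E\langle\bar X(t),(\partial_x K)_\delta\rangle^2\to0$. The first estimate of Lemma~\ref{lemma:22ACP} applied with $z=z'=\partial_x K$ and $t=t'$ gives $\E\langle\bar X(t),(\partial_x K)_\delta\rangle^2=c(t,(\partial_x K)_\delta,t,(\partial_x K)_\delta)\lesssim\delta^2\|(-\Delta_\delta)^{-1/2}\partial_x K\|_{L^2(\Lambda_\delta)}^2$, and $\|(-\Delta_\delta)^{-1/2}\partial_x K\|_{L^2(\Lambda_\delta)}$ is uniformly bounded in $\delta$ by Lemma~\ref{lemma:S2ACP}(i) (with $r=1$); hence this expectation is $O(\delta^2)$. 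For (iii), using $\Delta K_\delta=\delta^{-2}(\Delta_\delta K)_\delta$ (Lemma~\ref{lemma:15ACP}(i)) and again the first estimate of Lemma~\ref{lemma:22ACP},
\[
\delta\,\big|\E[\bar X_\delta^\Delta(t')\langle\bar X(t),(\partial_x K)_\delta\rangle]\big|=\delta^{-1}\big|c(t',(\Delta_\delta K)_\delta,t,(\partial_x K)_\delta)\big|\lesssim\delta\,\|(-\Delta_\delta)^{1/2}K\|_{L^2(\Lambda_\delta)}\,\|(-\Delta_\delta)^{-1/2}\partial_x K\|_{L^2(\Lambda_\delta)},
\]
where $(-\Delta_\delta)^{-1/2}\Delta_\delta K=-(-\Delta_\delta)^{1/2}K$; both rescaled norms are bounded uniformly in $\delta$ (by Lemma~\ref{lemma:18ACP}(i) and Lemma~\ref{lemma:S2ACP}(i) respectively), so the right-hand side tends to $0$.

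The only genuinely non-routine point is the extra temporal decay $|t-t'|^{-1/2+\eps}$ in (i): the black-box bound of Lemma~\ref{lemma:22ACP} only yields the uniform estimate $|\E[\bar X_\delta^\Delta(t)\bar X_\delta^\Delta(t')]|\lesssim\delta^{-2}$, so one really has to keep the explicit heat-kernel decay in $|t-t'|$, which is what forces the spectral computation above. Beyond that the proof is bookkeeping of scaling exponents, with Assumption~\ref{ass:L} entering (through Lemma~\ref{lemma:S2ACP}) precisely to control $\|(-\Delta_\delta)^{-1/2}\partial_x K\|_{L^2(\Lambda_\delta)}$, since $\partial_x K$ alone only has vanishing mean, which is not enough for the negative exponent $1/2$.
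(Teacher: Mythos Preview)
Your argument for part~(i) is essentially correct: the spectral expansion and the interpolation $\lambda_n e^{-\theta\lambda_n\tau}\le C_\eps\lambda_n^{1/2+\eps}\tau^{-1/2+\eps}$ are the right tools. One small slip: after the scaling you get $\delta^{-1-2\eps}|t-t'|^{-1/2+\eps}$, and ``relabelling $2\eps$ as $\eps$'' yields $\delta^{-1-\eps}|t-t'|^{-1/2+\eps/2}$, which is \emph{not} the stated bound with the same $\eps$ in both exponents. This is harmless for the applications (only integrability in $|t-t'|$ and a $\delta$-power strictly above $-2$ are used), but the relabelling claim as written is inaccurate.

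There is, however, a genuine gap in your treatment of (ii) and (iii). The identification $\langle\bar X(t),\partial_x K_\delta\rangle=\langle\bar X(t),(\partial_x K)_{\delta,x_0}\rangle$ is \emph{not} a convention; by the definition of the localisation one has $\partial_x(K_{\delta,x_0})=\delta^{-1}(\partial_x K)_{\delta,x_0}$ (this is precisely how the factor $\delta^{-4}$ appears in Step~6 of the proof of Lemma~\ref{lemma:U3}). With this extra $\delta^{-1}$, your application of the first estimate in Lemma~\ref{lemma:22ACP} only produces $O(1)$ in~(ii) and $O(1)$ in~(iii), not $o(1)$. Indeed, on the diagonal $t=t'>0$ one can check that $\E\langle\bar X(t),\partial_x K_\delta\rangle^2\to(2\theta)^{-1}\|K\|_{L^2(\R)}^2\neq0$, so your Cauchy--Schwarz reduction to the diagonal cannot work.

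The remedy is to keep the temporal decay, exactly as you did in~(i). Writing $\partial_x K_\delta=\delta\,\Delta L_\delta$ via Assumption~\ref{ass:L} and applying your spectral bound from~(i) with $L$ in place of $K$ gives, for $t\neq t'$,
\[
\big|\E[\langle\bar X(t),\partial_x K_\delta\rangle\langle\bar X(t'),\partial_x K_\delta\rangle]\big|\lesssim\delta^{1-\eps}|t-t'|^{-1/2+\eps}\to0,
\]
and the mixed term in~(iii) is handled the same way. In other words, Lemma~\ref{lemma:22ACP} alone is too crude here; the heat-kernel decay in $|t-t'|$ is needed in all three parts, not just in~(i). (In the paper the result is only applied under the double integral $\int_0^T\int_0^t\ldots\,dt'\,dt$, so $t\neq t'$ a.e.\ and this is sufficient.)
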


\begin{proposition}[Proposition 3 in \cite{ACP}] \label{prop:asymptoticsbarI}
As $\delta \rightarrow 0$, the following asymptotics hold true:
\begin{itemize}
\item[(i)] $\delta^2 \E \bar{\mathcal I_{\delta}} \rightarrow \frac{T \| K' \|_{L^2(\R)}^2}{2 \vartheta \| K \|_{L^2(\R)}^2}$.
\item[(ii)] $\frac{\mathcal I_{\delta}}{\E \bar{\mathcal I}_{\delta}} \stackrel{\P}{\rightarrow} 1$.
\end{itemize}
Moreover, if
$$
\int_0^T \left( \widetilde{X}_{\delta, x_0}^{\Delta}(t) \right)^2 \, dt = o_{\P}(\delta^{-2}),
$$
then
\begin{itemize}
\item[(iii)] $\mathcal I_{\delta} = \bar{\mathcal I}_{\delta} + o_{\P}(\delta^{-2})$.
\end{itemize}
\end{proposition}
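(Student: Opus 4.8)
The plan is to handle the three assertions in the order (i), (iii), (ii), since the proof of (iii) uses (i) and that of (ii) uses both.

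\emph{Part (i).} Because $\bar X$ is the stochastic convolution and $S_{\theta}(t)$ is self-adjoint, for each fixed $t$ the variable $\bar X_{\delta}^{\Delta}(t)=\langle\bar X(t),\Delta K_{\delta}\rangle$ is centred Gaussian with $\E(\bar X_{\delta}^{\Delta}(t))^2=\int_0^t\|S_{\theta}(r)\Delta K_{\delta}\|^2\,dr$ by the It\^o isometry. Fubini then gives $\delta^2\E\bar{\mathcal I}_{\delta}=\delta^2\|K\|_{L^2(\R)}^{-2}\int_0^T(T-r)\|S_{\theta}(r)\Delta K_{\delta}\|^2\,dr$. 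I would now apply the scaling identities of Lemma \ref{lemma:15ACP} ($\Delta K_{\delta}=\delta^{-2}(\Delta_{\delta}K)_{\delta}$, $S_{\theta}(r)z_{\delta}=(S_{\delta}(\theta r\delta^{-2})z)_{\delta}$, $\|w_{\delta}\|_{L^2(\Lambda)}=\|w\|_{L^2(\Lambda_{\delta})}$) and substitute $u=\theta r\delta^{-2}$, turning the expression into $\tfrac1\theta\|K\|_{L^2(\R)}^{-2}\int_0^{\theta T\delta^{-2}}(T-\theta^{-1}\delta^2 u)\,\|S_{\delta}(u)\Delta_{\delta}K\|_{L^2(\Lambda_{\delta})}^2\,du$. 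For fixed $u>0$ one has $S_{\delta}(u)\Delta_{\delta}K\to e^{u\Delta_0}K''$ in $L^2(\R)$ by Lemma \ref{lemma:18ACP}(i) with $h=1$ (using $K\in H^2(\R)$, compactly supported) combined with Proposition \ref{prop:17ACP}(ii), so the integrand tends to $T\|e^{u\Delta_0}K''\|_{L^2(\R)}^2$. A $u$-integrable, $\delta$-uniform majorant comes from $\|S_{\delta}(u)\Delta_{\delta}K\|_{L^2(\Lambda_{\delta})}\le\|\Delta_{\delta}K\|_{L^2(\Lambda_{\delta})}$ (uniformly bounded, again by Lemma \ref{lemma:18ACP}(i)) for small $u$, and from $\|S_{\delta}(u)\Delta_{\delta}K\|_{L^2(\Lambda_{\delta})}=\|(-\Delta_{\delta})S_{\delta}(u/2)S_{\delta}(u/2)K\|\lesssim u^{-1}\|K\|_{L^2(\R)}$ for large $u$, via Proposition \ref{prop:17ACP}(i) with $h=1$. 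Dominated convergence then yields $\delta^2\E\bar{\mathcal I}_{\delta}\to\tfrac T\theta\|K\|_{L^2(\R)}^{-2}\int_0^{\infty}\|e^{u\Delta_0}K''\|_{L^2(\R)}^2\,du$, and the Plancherel identity $\int_0^{\infty}\|e^{u\Delta_0}f\|_{L^2(\R)}^2\,du=\tfrac12\|(-\Delta_0)^{-1/2}f\|_{L^2(\R)}^2$ applied to $f=K''=-(-\Delta_0)K$ gives $\int_0^{\infty}\|e^{u\Delta_0}K''\|^2\,du=\tfrac12\|K'\|_{L^2(\R)}^2$, which is exactly the stated limit.

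\emph{Part (iii).} Using $X=\bar X+\widetilde X$ one gets $\mathcal I_{\delta}-\bar{\mathcal I}_{\delta}=\|K\|_{L^2(\R)}^{-2}\big(\int_0^T(\widetilde X_{\delta}^{\Delta}(t))^2\,dt+2\int_0^T\bar X_{\delta}^{\Delta}(t)\widetilde X_{\delta}^{\Delta}(t)\,dt\big)$. The first integral is $o_{\P}(\delta^{-2})$ by hypothesis (and in any case this hypothesis follows from Lemma \ref{lemma:upperbounds}(i), since $|\widetilde X_{\delta}^{\Delta}(t)|\lesssim g_{3/2-\eps}(t)\,\delta^{-1/2-\eps}$ with $g_{3/2-\eps}\in L^2([0,T])$). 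For the cross term, Cauchy--Schwarz bounds it by $2\big(\int_0^T(\bar X_{\delta}^{\Delta})^2\big)^{1/2}\big(\int_0^T(\widetilde X_{\delta}^{\Delta})^2\big)^{1/2}=2\|K\|_{L^2(\R)}(\bar{\mathcal I}_{\delta})^{1/2}\cdot o_{\P}(\delta^{-1})$; since part (i) plus Markov's inequality give $\bar{\mathcal I}_{\delta}=O_{\P}(\delta^{-2})$, this product is $o_{\P}(\delta^{-2})$, proving (iii).

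\emph{Part (ii).} It suffices to show $\bar{\mathcal I}_{\delta}/\E\bar{\mathcal I}_{\delta}\stackrel{\P}{\to}1$, after which (iii) and (i) give $(\mathcal I_{\delta}-\bar{\mathcal I}_{\delta})/\E\bar{\mathcal I}_{\delta}=o_{\P}(\delta^{-2})/(c\delta^{-2})\to0$. For the first convergence I would use Chebyshev, so I need $\operatorname{Var}(\bar{\mathcal I}_{\delta})/(\E\bar{\mathcal I}_{\delta})^2\to0$. Since $(\bar X_{\delta}^{\Delta}(t))_t$ is Gaussian, Wick/Isserlis gives $\operatorname{Var}(\bar{\mathcal I}_{\delta})=2\|K\|_{L^2(\R)}^{-4}\int_0^T\int_0^T\big(\E[\bar X_{\delta}^{\Delta}(t)\bar X_{\delta}^{\Delta}(t')]\big)^2\,dt\,dt'$; bounding the covariance away from the diagonal by Lemma \ref{lemma:S6ACP}(i), $|\E[\bar X_{\delta}^{\Delta}(t)\bar X_{\delta}^{\Delta}(t')]|\lesssim\delta^{-1-\eps}|t-t'|^{-1/2+\eps}$, and near it by the uniform bound $\E(\bar X_{\delta}^{\Delta}(t))^2\lesssim\delta^{-2}$ obtained in the computation of (i), yields $\operatorname{Var}(\bar{\mathcal I}_{\delta})\lesssim\delta^{-2-2\eps}$, whereas $(\E\bar{\mathcal I}_{\delta})^2\sim c\,\delta^{-4}$ by (i); the ratio is $\lesssim\delta^{2-2\eps}\to0$. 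The main obstacle throughout is the dominated-convergence step in (i): exhibiting the $u$-integrable, $\delta$-uniform majorant for $\|S_{\delta}(u)\Delta_{\delta}K\|_{L^2(\Lambda_{\delta})}^2$ (small $u$ via uniform boundedness of $\Delta_{\delta}K$, large $u$ via the parabolic smoothing of Proposition \ref{prop:17ACP}(i)) and recognizing the limiting integral as $\tfrac12\|K'\|_{L^2(\R)}^2$; the rest is bookkeeping with the scaling lemmas of Section \ref{subsection:resultsACP} and the regularity estimates already established.
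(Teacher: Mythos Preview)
The paper does not prove this proposition: it is listed in Section~\ref{subsection:resultsACP} as a result carried over from \cite{ACP} (Proposition~3 there), with the explicit remark that among the appendix statements ``we show the proof only for Lemma~\ref{lemma:upperbounds}''. Your proposal is therefore a reconstruction of the \cite{ACP} argument specialised to $\gamma=0$, $d=1$, $B=I$, and it is correct in all three parts.

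One minor comparison: for the variance bound in~(ii) you invoke Lemma~\ref{lemma:S6ACP}(i), obtaining $\operatorname{Var}(\bar{\mathcal I}_\delta)\lesssim\delta^{-2-2\eps}$. The paper also records Lemma~\ref{lemma:22ACP}, whose second estimate applied with $z=z'=\Delta_\delta K$ (so that $z^{(\delta)}=-(-\Delta_\delta)^{1/2}K$ and $(-\Delta_\delta)^{-1/2}z^{(\delta)}=-K$) gives the slightly sharper $\operatorname{Var}(\bar{\mathcal I}_\delta)\lesssim\delta^{-2}$ directly, and this is presumably the route \cite{ACP} takes; either suffices since $(\E\bar{\mathcal I}_\delta)^2\sim c\,\delta^{-4}$. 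Your near-diagonal/away-from-diagonal split is in fact unnecessary, because $|t-t'|^{-1+2\eps}$ is already integrable over $[0,T]^2$ for any $\eps>0$, so Lemma~\ref{lemma:S6ACP}(i) alone controls the full double integral.
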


\subsection{Examples of nonlinearities $F$} \label{subsection:examples}

We provide additional details on our examples of $F: L^2(\Lambda) = H \to L^1(\Lambda)$.

\begin{example}
Let $g: \R \rightarrow \R$ be a real function such that $g$ is bounded and locally Lipschitz. Let us fix a function $h \in L^{\infty}(\Lambda)$. For $u \in H$, we define
$$
(F(u))(x) = g \left( \| u \| \right) \cdot h(x), \quad x \in \Lambda, \, a.e.
$$
It is not difficult to see that it satisfies our above assumptions. Since (a) and (c) are straightforward, we only check (b):
\begin{align*}
F(u + v)(x) \cdot v(x) &= g \left( \| u + v\| \right) \cdot h(x) \cdot v(x) \leq \| g \|_{L^{\infty}(\Lambda)} \| h \|_{L^{\infty}(\Lambda)} |v(x)| \\
&\leq C \left(1 + |u(x)|^2 + |v(x)|^2 \right), \quad u, v \in H, \, x \in \Lambda, \, a.e.
\end{align*}
Similarly, one can consider examples like $(F(u))(x) = h(x) \int_0^1 g(y) f(u(y)) \, dy$, $x \in \Lambda, \, a.e.$, where $g, h \in L^{\infty}(\Lambda)$ and $f: \R \rightarrow \R$ is Lipschitz and bounded.
\end{example}

\begin{example} \label{13}
This example concerns with Nemytskii type nonlinearities. Let $f: \R \rightarrow \R$ be a function such that $f \in C^1(\R)$ and the following conditions holds: There exists $C>0$, $q \ge 2$, $C_q >0$ such that
\begin{itemize}
\item[(i)] $|f(x)| \leq C (1 + |x|^2), \, x \in \R$,
\item[(ii)] $|f'(x)| \leq C (1 + |x|), \, x \in \R$, 
\item[(iii)] $f(x+y) \, y \leq C_q (1 + |y|^2 + |x|^q), \, x, y \in \R$.
\end{itemize}

We define $F: H \to L^1(\Lambda)$ as follows:
$$
(F(u))(x) = f \left( u(x) \right), \quad u \in H, \, x \in \Lambda, \, a.e.
$$

Note that (iii) is stronger than the well-known one-side Lipschitz condition: There exists $C > 0$ such that 
\begin{equation} \label{ds}
f(y) \, y \leq C(1 + |y|^2), \quad y \in \R.
\end{equation}
This condition has been imposed in \cite{gyongypardoux} to cover a large class of semilinear SPDEs. However such SPDEs do not contain the Burgers nonlinear term and do not treat nonlocal nonlinearities. Their approach uses comparison theorems for SPDEs. \\

We check that this nonlinearity $F$ satisfies our initial assumptions. \\

Let us consider (a). By (i) we know that $F: H \rightarrow L^1(\Lambda)$ is well-defined.

To show the local Lipschitz property of $F$, we use condition (ii). We fix $R > 0$ and the ball $B(0,R)$ in $H$. Using the mean value theorem for $u,v \in B(0,R)$, we have
\begin{align*}
\| F(v) - F(u) \|_{L^1(\Lambda)} &= \int_\Lambda \left| f(v(x)) - f(u(x)) \right| \, dx \\
&= \int_\Lambda \left| \int_0^1 f'(u(x) + s (v(x) - u(x)))  (u(x) - v(x)) \, ds \right| \, dx \\
&\leq C \int_\Lambda \int_0^1 (1 + |u(x) + s (v(x) - u(x))|) |u(x) - v(x)| \, ds \, dx \\
&\leq C \int_\Lambda (1 + 2 |u(x)| + |v(x)|) |u(x) - v(x)| \, dx \\
&\leq C_R \left( \int_\Lambda |u(x) - v(x)|^2 \, dx \right)^{1/2},
\end{align*}
where we used that $u,v \in B(0,R)$ on the last line. This gives (a). The point (b) follows easily from (iii).
 
Finally note that it is easy to obtain (c) by using only condition (i). Indeed if $K = D(0, M)$ is a ball in $L^{\infty}(\Lambda)$, then
$$
\| F(u) \|_{L^p (\Lambda)}^p = \int_{\Lambda} |f(u(x))|^p \, dx \leq C' \int_{\Lambda} \left( 1 + |u(x)|^{2p} \right) \, dx \leq C' (1 + M^{2p}),
$$
for some constant $C' > 0$. It follows that $F(K)$ is a bounded set in $L^{p}(\Lambda)$ and this gives the assertion. 

We have checked that (i), (ii) and (iii) imply our initial assumptions (a), (b) and (c). \\ 

It is useful to introduce some simple conditions on $f \in C^{1}(\R)$ which imply (i), (ii) and (iii). In the next classes of examples we always assume (i) and (ii). On the other hand, we will consider different conditions which imply (iii).

\begin{itemize}
\item[(I)] $f$ verifies (ii) and has at most a linear growth, i.e., there exists $C > 0$ such that
$$
|f(x)| \leq C (1 + |x|), \quad x \in \R.
$$
\end{itemize}
Clearly this condition implies (i). It is also not difficult to show that this condition implies (iii).

\begin{itemize}
\item[(II)] $f$ verifies (i) and (ii) and moreover there exists $M \in \R$ such that
\begin{equation} \label{der}
f'(x) \leq M, \quad x \in \R.
\end{equation}
\end{itemize}
In order to check (iii), we write
$$
f(x+y) \, y  = \left( f(x+y) - f(x) \right) y + f(x) \, y.
$$
Now by (i) we have $|f(x) y| \leq C(1 + |x|^2) |y| \leq C' (1 + |y|^2 + |x|^4)$ for some constant $C' > 0$ and all $x, y \in \R$. By the mean value theorem we get
$$
\left( f(x+y) - f(x) \right) y \leq M y^{2},
$$
which shows (iii). An example of $f$ in this class is
$$
f(x) = - C x |x| + Mx, \quad x \in \R, \quad C > 0.
$$
Note that the function $g(x) = - x |x|$ is decreasing and has at most quadratic growth. Moreover its derivative has at most linear growth.

\begin{itemize}
\item[(III)] Consider $f \in C^{1}(\R)$ which satisfies (ii) and such that there exists $\eta \in (0,1)$ and $C_{\eta} > 0$ with
\begin{equation} \label{delt}
|f(x)| \leq C_{\eta} (1 + |x|^{1+\eta}), \quad x \in \R.
\end{equation}
(Clearly this condition is stronger than (i)). In addition we assume that $f$ verifies the one-sided Lipschitz condition \eqref{ds}.
\end{itemize}

We prove under these conditions that hypothesis (iii) holds for $f$. We write
\begin{align*}
f(x+y) \, y &= f(x+y) \, (x+y) - f(x+y) \, x \leq C(1 + (x+y)^2) + C_{\eta} |x| (1 + |x+y|^{1+\eta}) \\
&\leq C' (1 + |y|^2 + |x|^2) + C' |x|^{2+\eta} + C' |x| \, |y|^{1+\eta},
\end{align*}
for some constant $C' > 0$. By the Young inequality: $|x| \, |y|^{1+\eta} \leq \frac{|y|^2}{p} + \frac{|x|^{q}}{q}$ with $p = \frac{2}{1 + \eta}$ and $q = \frac{p}{p-1}$. Hence (iii) holds. 

For instance we can fix $\eta \in (0,1)$ and consider any bounded $C^1$-function $g$ which is Lipschitz and nonnegative. Then 
$$
f(x) = - x |x|^{\eta} \, g(|x|^{1-\eta}), \quad x \in \R,
$$
verifies our conditions.

It is clear that (ii), \eqref{delt} and \eqref{ds} hold. Note that in the previous example we can also consider $\eta = 0$ and $\eta = 1$ (to treat these cases we use (I) and (II)).
\end{example}

\subsection{The notion of weak solution} \label{subsection:weak solution}

\begin{proof}[Proof of Lemma \ref{lemma:weak solution}]
For the nonlinear part $\widetilde{X}$ we have by \eqref{eq:nonlinear}, $\P$-a.s.,
\begin{equation} \label{eq:weakformulation}
\widetilde{X}(t) = S_{\theta}(t) X_0 + \frac{1}{2} \int_0^t S_{\theta}(t-s) \partial_x \left( (\bar{X}(s) + \widetilde{X}(s))^2  \right) \, ds + \int_0^t S_{\vartheta}(t-s) F \left( \bar{X}(s) + \widetilde{X}(s) \right) \, ds.
\end{equation}
Now we work with $\omega$ fixed (i.e., we fix $\omega \in \Omega'$ for some event $\Omega'$ which verifies $\P(\Omega') = 1$). We first prove that
$$
\left\langle z, \widetilde{X}(t) \right\rangle = \left\langle z, X_0 \right\rangle + \theta \int_0^t \left\langle \Delta z, \widetilde{X}(s) \right\rangle \, ds - \frac{1}{2} \int_0^t \left\langle \partial_x z, (\bar{X}(s) + \widetilde{X}(s))^2 \right\rangle \, ds + \int_0^t \left\langle z, F \left( \bar{X}(s) + \widetilde{X}(s) \right) \right\rangle \, ds
$$
for any $t \in [0,T]$, $z \in H^2$ (this is so-called weak formulation of \eqref{eq:weakformulation}). To this purpose we argue as in \cite{B}, see also Proposition A.6 in \cite{DPZ}.

Let us consider $q \geq 2$ from our hypothesis \eqref{bb}. Let us choose $s \in [0, 1/2)$ such that $H^s$ embedds into $L^q(\Lambda)$. We know  that, $\mathbb{P}$-a.s., $\bar{X}(\cdot , \omega) \in C([0,T]; H^{s})$  by Proposition \ref{prop:regulXbar}. Setting
$$
y(t) := \bar{X}(t, \omega), \quad t \in [0,T],
$$ 
we choose $(\varphi_n) \subset C([0,T]; H^1)$ such that $\varphi_n (t) \rightarrow y(t)$ in $H^s$, uniformly in $t \in [0,T]$ (see, for instance, Corollary 0.1.3 in \cite{L}).  

Denote $B_s(0, N)$ the closed ball in the space $H^s$ with radius $N$, that is $B_s(0, N) = \{ u \in H^s; \| u \|_s \leq N \}$, and note that such ball is included in a ball of the space $L^q(\Lambda)$.

Note that there exists $N = N(\omega) > 0$ such that $y(t) \in B_s(0,N)$  for any $t \in [0,T]$. Therefore, there exists $n_0 = n_0(\omega) > 0$ such that for any $n \geq n_0$: $\varphi_n(t) \in B_s(0, N+1)$, for any $t \in [0,T]$.
 
Then define
$$
\widetilde{X}_n (t) = S_{\theta}(t) X_0 + \frac{1}{2} \int_0^t S_{\theta}(t-s) \partial_x \left( (\varphi_n (s) + \widetilde{X}_n (s))^2  \right) \, ds + \int_0^t S_{\theta}(t-s) F \left( \varphi_n (s) + \widetilde{X}_n(s) \right) \, ds
$$
and note that $\partial_x \left( (\varphi_n (\cdot) + \widetilde{X}_n (\cdot))^2 \right) \in C([0,T]; H)$ (see also Proposition \ref{prop:regulXtilde}). Hence we have
\begin{align}
\left\langle z, \widetilde{X}_n (t) \right\rangle &= \left\langle z, X_0 \right\rangle + \theta \int_0^t \left\langle \Delta z, \widetilde{X}_n (s) \right\rangle \, ds - \frac{1}{2} \int_0^t \left\langle \partial_x z, (\varphi_n (s) + \widetilde{X}_n (s))^2 \right\rangle \, ds \notag \\
&\phantom{=}+ \int_0^t \left\langle z, F \left( \varphi_n (s) + \widetilde{X}_n(s) \right) \right\rangle \, ds. \label{eq:weakformulationn}
\end{align}

Using the assumptions on the nonlinearity $F$ and $n \geq n_0$, we obtain
\begin{align}
&\int_0^1 F \left(\varphi_n(s, \cdot) + \widetilde{X}_n(s, \cdot ) \right)(x) \,  \widetilde{X}_n(s, x) \, dx \leq C_q \int_0^1 \left( 1 + | \varphi_n(s,x) |^q + |\widetilde{X}_n(s, x)|^2 \right) \, dx \notag \\
&\leq C_q \left( 1 + \| \varphi_n(s) \|_{L^q(\Lambda)}^q + \| \widetilde{X}_n(s) \|^2 \right) \leq C_q \left( 1 + (N+1)^q + \| \widetilde{X}_n(s) \|^2 \right) \notag \\
&\leq C_{q, N+1} \left( 1 + \| \widetilde{X}_n(s) \|^2 \right) \label{eq:formulationF},
\end{align}
for some constant $C_{q, N+1} > 0$. 

Following the proof of Theorem 14.2.4 in \cite{DPZ96} with the additional term handled as in \eqref{eq:formulationF}, we use the Gronwall lemma to obtain an upper bound for $\| \widetilde{X}_n(s) \|^2$ (analogous to (14.2.12) in \cite{DPZ96}). We get
\begin{equation} \label{eq:boundonXntilde}
\sup_{n \geq 1} \sup_{t \in [0,T]} \| \widetilde{X}_n (t) \| = C_0 < \infty,
\end{equation}
where $C_0$ depends on $\omega, T$, $\| X_0 \|$, $N$ and $\sup_{n \geq 1} \| \varphi_n \|_{C([0,T]; H^s)}$ which is finite. Note that
\begin{align}
\widetilde{X}_n (t) - \widetilde{X} (t) &= \frac{1}{2} \int_0^t S_{\theta}(t-s) \partial_x \left( (\widetilde{X}_n (s) + \varphi_n (s))^2 - (\widetilde{X}(s) + y(s))^2 \right) \, ds \notag \\
&\phantom{=}+ \int_0^t S_{\theta}(t-s) \left( F \left( \widetilde{X}_n(s) + \varphi_n(s) \right) - F \left( \widetilde{X}(s) + y(s) \right) \right) \, ds. \label{eq:difference}
\end{align}
We first handle the $L^2(\Lambda)$-norm of the first term on the right-hand side of \eqref{eq:difference}. Arguing as in the proof of Lemma 14.2.1 in \cite{DPZ96} (see also page 393 in \cite{DPDT}) we obtain, for $0< s \leq t $, $t \in (0,T]$,
$$
\left\| S_{\theta}(t-s) \partial_x \left( (\widetilde{X}_n (s) + \varphi_n (s))^2 - (\widetilde{X}(s) + y(s))^2 \right) \right\| \leq \frac{C_1} {(t-s)^{3/4} } \left\| (\widetilde{X}_n(s) + \varphi_n (s))^2 - (\widetilde{X}(s) + y(s))^2 \right\|_{L^1(\Lambda)} 
$$
for some constant $C_1 > 0$. We are using that
$$
\| S_{\theta}(t) \partial_x  \phi\| \leq \frac{C_1}{t^{3/4}} \| \phi\|_{L^1(\Lambda)}, \quad \phi \in H^1, \quad t\in (0,T]. 
$$
Now it is not difficult to see that, using also the bound \eqref{eq:boundonXntilde},
$$
\left\| (\widetilde{X}_n(s) + \varphi_n (s))^2 - (\widetilde{X}(s) + y(s))^2 \right\|_{L^1(\Lambda)} \leq C_2 \| \widetilde{X}_n (s) - \widetilde{X} (s) \| + C_3 \| \varphi_n - y \|_{C([0,T]; H)},
$$
where constants $C_2, C_3 > 0$ are independent of $n$ (they depend also on $\omega$ and $T$). Recall that $\bar{X}(t, \omega) = y(t)$.

Now we handle the $L^2(\Lambda)$-norm of the second term on the right-hand side of \eqref{eq:difference} using the local Lipschitz property of $F$. In order to do that, consider $n \geq n_0$ and note that $\varphi_n(s) + \widetilde{X}_n(s) \in B(0, C_0 + N + 1)$, $y(s) \in B(0, N)$ (we are using closed balls in $H$) and $\widetilde{X}(s) \in B(0, K)$ for some $K > 0$, since $\omega \in \Omega'$ is fixed and we know that $\widetilde{X} \in C([0,T]; C(\bar \Lambda))$, $\P$-a.s., by Proposition \ref{prop:regulXtilde}.
\begin{align*} 
&\left\| S_{\theta}(t-s) \left( F \left( \widetilde{X}_n(s) + \varphi_n(s) \right) - F \left( \widetilde{X}(s) + y(s) \right) \right) \right\| \\
&\lesssim \frac{1}{(t-s)^{1/4}} \left\| F \left( \widetilde{X}_n(s) + \varphi_n(s) \right) - F \left( \widetilde{X}(s) + y(s) \right) \right\|_{L^1(\Lambda)} \\
&\leq \frac{C_{C_0 + N + 1 + N + K}}{(t-s)^{1/4}} \left\| \left( \widetilde{X}_n(s) + \varphi_n(s) \right) - \left( \widetilde{X}(s) + y(s) \right) \right\| \\
&\leq \frac{C_4}{(t-s)^{1/4}} \| (\widetilde{X}_n(s) - \widetilde{X}(s) \| + \frac{C_5}{(t-s)^{1/4}} \| \varphi_n - y \|_{C([0,T]; H)},
\end{align*}
for some constants $C_4, C_5 > 0$ independent of $n$ (they depend also on $\omega$ and $T$).

Using the above upper bounds, we receive
\begin{align*}
\| \widetilde{X}_n (t) - \widetilde{X} (t) \| &\leq C_6 \int_0^t \left( \frac{1}{(t-s)^{3/4}} + \frac{1}{(t-s)^{1/4}} \right) \| (\widetilde{X}_n(s) - \widetilde{X}(s) \| \, ds + C_7 \| \varphi_n - y \|_{C([0,T]; H)}, \\
&\leq C_8 \int_0^t \frac{1}{(t-s)^{3/4}} \| (\widetilde{X}_n(s) - \widetilde{X}(s) \| \, ds + C_7 \| \varphi_n - y \|_{C([0,T]; H)},
\end{align*}
for some constants $C_6, C_7, C_8 > 0$ independent of $n$ (they depend also on $\omega$ and $T$). Applying the Henry-Gronwall lemma (cf. Section 1.2.1 in \cite{Henry}), we find, for any $t \in [0,T]$,
$$
\| \widetilde{X}_n (t) - \widetilde{X} (t) \| \leq C_7 \, M \| \varphi_n - y \|_{C([0,T]; H)},
$$
where $M = M(C_8, T) > 0$. We obtain that, $\P$-a.s.,
$$
\lim_{n \rightarrow \infty} \| \widetilde{X}_n (t) - \widetilde{X}(t) \|_{C([0,T]; H)} = 0.
$$
Now it is easy to pass the limit in \eqref{eq:weakformulationn} as $n \rightarrow \infty$ and get, $\P$-a.s.,
\begin{align}
\left\langle z, \widetilde{X} (t) \right\rangle &= \left\langle z, X_0 \right\rangle + \theta \int_0^t \left\langle \Delta z, \widetilde{X}(s) \right\rangle \, ds - \frac{1}{2} \int_0^t \left\langle \partial_x z, (\bar{X}(s) + \widetilde{X}(s))^2 \right\rangle \, ds \notag \\
&\phantom{=}+ \int_0^t \left\langle z, F \left( \bar{X}(s) + \widetilde{X}(s) \right) \right\rangle \, ds. \label{eq:z,Xtilde}
\end{align}
On the other hand, we have, $\P$-a.s., for any $z \in H^2$, $t \in [0,T]$
\begin{equation} \label{eq:z,Xbar}
\left\langle z, \bar{X} (t) \right\rangle = \theta \int_0^t \left\langle \Delta z, \bar{X} (s) \right\rangle \, ds + \left\langle z, W(t) \right\rangle.
\end{equation}
Summing up equalities \eqref{eq:z,Xtilde} and \eqref{eq:z,Xbar} gives the result, since $X = \bar{X} + \widetilde{X}$.
\end{proof}

\subsection{Rate optimality} \label{subsection:rate optimality}

\begin{proof}[Proof of Proposition \ref{prop:lowerbound}]
It is enough to find a lower bound for
\begin{equation} \label{eq:lowerbound2}
\inf_{\hat{\theta}} \sup_{\kappa \in \{ \kappa_1, \kappa_2 \}} \left( \E_{\kappa} \left( \hat{\theta} - \theta \right)^2 \right)^{1/2}
\end{equation}
for two suitable admissible alternatives $\kappa_1, \kappa_2 \in \Theta$. We fix $\theta_0 > 0$ and consider for some $c > 0$ the alternatives $\kappa_1 = (\theta_0, 0, 0, X_0^{\theta_0})$ and $\kappa_2 = (\theta_0 + c \delta, 0, 0, X_0^{\theta_0 + c \delta})$, where both nonlinearities are nullified and the initial conditions are taken random as $X_0^{\theta} = \int_{- \infty}^0 S_{\theta}(-s) \, dW(s)$ with $\theta \in \{ \theta_0, \theta_0 + c \delta \}$ and the process $(W(s), s \in \R)$ that is now a two-sided cylindrical Wiener process.

It is not difficult to prove that the random variable $X_0^{\theta}$ verifies the assumptions corresponding to \eqref{eq:ini}. We only note that to prove that, for $\omega \in \Omega$, $\P$-a.s., $X_0^{\theta}(\omega) \in  C(\bar \Lambda)$ one can proceed with the same method of the proof of Lemma 5.1 in \cite{DPZ} which is based on the Kolmogorov-Chentsov test (see also Theorem 5.22 in \cite{DPZ}).
 
This choice leads us to a linear submodel of our model \eqref{eq:Burgers} that we consider from now on. That is
\begin{equation} \label{s55} 
\begin{cases}
dX(t) &= \theta \partial_{xx}^2 X(t) \, dt + \, dW(t), \quad X(0) = X_0^{\theta}, \\
X(t)|_{\partial \Lambda} &= 0, \quad 0 < t \leq T,
\end{cases} 
\end{equation}
with $\theta \in \{ \theta_0, \theta_0 + c \delta \}$. The unique mild solution to equation \eqref{s55} is
\begin{equation} \label{eq:llb}
X(t) = S_{\theta}(t) X_0^{\theta} + \int_0^t S_{\theta} (t-s) \, dW(s) =: \widetilde{X}(t) + \bar{X}(t).
\end{equation}
With this initial condition the process $X$ in \eqref{eq:llb} is stationary. We also observe that the process
$$   
X_{\delta, x_0}(t) = \int_{- \infty}^t \left\langle S_{\theta}(t-s) K_{\delta, x_0}, dW(s) \right\rangle 
$$   
is stationary as well. Therefore, we may argue exactly the same as in the proof of Proposition 5.12 and Lemma A.1 in \cite{AR}, so we obtain for sufficiently small $\delta$ and a suitable constant $C > 0$ the lower bound for \eqref{eq:lowerbound2}
$$
C \delta \, \frac{\left\| \left( I - \Delta_{\delta, x_0} \right)^{-1} K \right\|_{L^2(\Lambda_{\delta, x_0})}^2}{\sqrt{T} \left\| \left( - \Delta_{\delta, x_0} \right)^{1/2} K \right\|_{L^2(\Lambda_{\delta, x_0})}^2}.
$$
The $L^2(\R)$-convergence (and the finiteness) of the terms $\left( I - \Delta_{\delta, x_0} \right)^{-1} K$ and $\left( - \Delta_{\delta, x_0} \right)^{1/2} K$ is established using Lemma \ref{lemma:20ACP} and Lemma A.1 in \cite{AR} (cf. also the proof of Theorem 6 in \cite{ACP}). The proof is complete.
\end{proof}

\subsection{The algebra property of Sobolev spaces} \label{subsection:algebra}
This section is used in the proof of Proposition \ref{prop:regulXtilde}. It is well known that the space $H^s$ is an algebra with respect to pointwise multiplication for $s > 1/2$ (in our case $d = 1$). For bounded functions it holds true for any $s \geq 0$. We believe the next result is known, but we have not found a precise reference. We provide a~proof for the sake of completeness.

\begin{proposition} \label{prop:algebra}
For any $s \in [0, 1]$, we have
\begin{equation} \label{eq:algebra}
\| uv \|_s \leq \| u \|_{\infty} \| v \|_s + \| v \|_{\infty} \| u \|_s,
\end{equation}
for any $u, v \in H^s \cap L^{\infty}(\Lambda)$.
\end{proposition}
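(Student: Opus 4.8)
The plan is to establish the inequality first for $s=0$ and $s=1$ by direct computation, and then obtain the range $s\in(0,1)$ by complex interpolation, exploiting that $(-\Delta)^{s/2}$ interpolates between the identity and $(-\Delta)^{1/2}$ and that $H^s=[L^2(\Lambda),H^1]_s$ with equivalence of norms. For $s=0$ the bound $\|uv\|_0=\|uv\|_{L^2}\le\|u\|_\infty\|v\|_{L^2}\le\|u\|_\infty\|v\|_0+\|v\|_\infty\|u\|_0$ is immediate. For $s=1$, recalling that $H^1=H^1_0$ with equivalent norm $\|(-\Delta)^{1/2}\cdot\|=\|\partial_x\cdot\|_{L^2}$ (cf.\ the discussion after the definition of $H^s$ in Section~\ref{subsection:notation}), one writes $\partial_x(uv)=u\,\partial_x v+v\,\partial_x u$ and uses the triangle inequality in $L^2(\Lambda)$ together with $\|u\,\partial_x v\|_{L^2}\le\|u\|_\infty\|\partial_x v\|_{L^2}$ and symmetrically for the other term; here one needs $u,v\in H^1\cap L^\infty(\Lambda)$ so that $uv\in H^1_0$ (the boundary values vanish since $u,v$ do), which is exactly the hypothesis.

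For the intermediate range, I would invoke the bilinear interpolation theorem (e.g.\ the Calder\'on complex method applied to bilinear maps, as in Bergh--L\"ofstr\"om, Theorem 4.4.1). Consider the bilinear map $B(u,v)=uv$. From the $s=0$ estimate it is bounded $L^\infty\times L^2\to L^2$ and $L^2\times L^\infty\to L^2$; from the $s=1$ estimate it is bounded $L^\infty\times H^1\to H^1$ and $H^1\times L^\infty\to H^1$. Interpolating the pair of endpoint statements with exponent $s$ gives $B$ bounded from $L^\infty\times[L^2,H^1]_s\to[L^2,H^1]_s$ and from $[L^2,H^1]_s\times L^\infty\to[L^2,H^1]_s$, i.e.\ $\|uv\|_s\lesssim\|u\|_\infty\|v\|_s$ and $\|uv\|_s\lesssim\|v\|_\infty\|u\|_s$, whence \eqref{eq:algebra} up to a multiplicative constant. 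To get the clean constant-free form with the sum on the right, one can instead interpolate more carefully, or simply argue that the stated inequality with a constant is what is actually used downstream; alternatively, a direct approach via the Littlewood--Paley or the pointwise fractional Leibniz (Kato--Ponce) inequality on the interval gives \eqref{eq:algebra} with constant one after tracking the sharp constants in the endpoint cases.

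The main obstacle is the boundary condition bookkeeping in the interpolation: the spaces $H^s$ for $s\le 1/2$ do not "see" the Dirichlet condition, while for $s\in(1/2,1]$ they do, so one must be a little careful that $[L^2(\Lambda),H^1_0(\Lambda)]_s$ coincides with the space $H^s=\Dom((-\Delta)^{s/2})$ defined spectrally in the paper --- this is standard (it is $H^s_0$, which equals $H^s$ for $s<1/2$, equals $H^{1/2}_{00}$ at $s=1/2$, and equals $\{u\in H^s:u|_{\partial\Lambda}=0\}$ for $s>1/2$), but it is the point that requires the cleanest statement. A self-contained alternative avoiding interpolation altogether is to use the spectral representation: write $(-\Delta)^{s/2}(uv)$ via the eigenfunction expansion and estimate $\|(-\Delta)^{s/2}(uv)\|$ by splitting $uv$ and using that for $0\le s\le 1$ the multiplier argument reduces to the two endpoint bounds through a square-function estimate; this is essentially the same proof packaged differently. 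I would present the interpolation version as the main line, since it is shortest, and remark that the endpoint cases $s=0,1$ are elementary.
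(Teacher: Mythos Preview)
Your approach differs substantially from the paper's, which bypasses interpolation altogether and invokes the theory of Dirichlet forms: since $-(-\Delta)^{s}$ generates a symmetric Markovian semigroup on $L^2(\Lambda)$ for every $s\in(0,1]$ (obtained by subordination of the Dirichlet heat semigroup), its associated Dirichlet form $\mathcal E_s(w,w)=\|(-\Delta)^{s/2}w\|^2$ automatically satisfies the product inequality
\[
\sqrt{\mathcal E_s(uv,uv)}\le \|u\|_\infty\sqrt{\mathcal E_s(v,v)}+\|v\|_\infty\sqrt{\mathcal E_s(u,u)}
\]
by a general theorem (Fukushima--Oshima--Takeda, Theorem~1.4.2). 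This yields the sharp constant~$1$ in a single step, with no interpolation and no boundary bookkeeping beyond the Markov property.

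Your interpolation sketch contains a genuine gap. You assert that the $s=1$ endpoint gives boundedness of $B:L^\infty\times H^1\to H^1$, but this is false: for $u$ merely in $L^\infty$ and $v\in H^1$ the product $uv$ need not lie in $H^1$ (take $u$ a step function). The product rule $\partial_x(uv)=u\,\partial_x v+v\,\partial_x u$ requires \emph{both} factors to carry a derivative and delivers the sum $\|u\|_\infty\|v\|_1+\|v\|_\infty\|u\|_1$, not a single product $C\|u\|_\infty\|v\|_1$. Hence the bilinear map is not bounded on the endpoint couples you name, and the bilinear interpolation theorem does not apply as written. One can attempt a repair by fixing $u$ in a dense class such as $C_c^\infty(\Lambda)$ and interpolating the linear map $v\mapsto uv$, but the $H^1\to H^1$ norm of that map is controlled only by $\|u\|_\infty+\|u\|_1/\|v\|_\infty\cdot(\ldots)$-type expressions, so one does not obtain a clean operator bound to interpolate against the $L^2\to L^2$ bound $\|u\|_\infty$. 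At best such a route yields $\|uv\|_s\le C_s(\|u\|_\infty\|v\|_s+\|v\|_\infty\|u\|_s)$ with an uncontrolled constant $C_s$; as you correctly observe, this suffices for the applications in the paper, but it does not establish the proposition as stated. The Dirichlet-form argument is both shorter and delivers the constant~$1$.
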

\begin{proof}   
We will use the theory of Dirichlet forms, see, e.g., \cite{FOT}. 

Consider the negative definite, self-adjoint linear operator $A = \Delta$ with Dirichlet boundary conditions on $\Lambda$. It is well known that the semigroup $e^{t A} = T_t$ is symmetric and Markovian on $L^2(X, m) = L^2(\Lambda)$ (according to the definition used in Theorem 1.4.1 of \cite{FOT} or the one in \cite{H}). We will consider the Dirichlet form $\mathcal E$ associated to $T_t$. By Theorem 1.3.1 in \cite{FOT}, there is a~biunivocal correspondence: $\mathcal E \leftrightarrow e^{tA} \leftrightarrow A$. Moreover,
$$
\Dom (\mathcal E) = \Dom (\sqrt{-A}), \quad \mathcal E(u,v) = \left\langle \sqrt{-A}u, \sqrt{-A}v \right\rangle.
$$

Recall Theorem 1.4.2 in \cite{FOT}: For any $u, v \in \Dom (\mathcal E) \cap L^{\infty}(X,m)$ (in our case $L^{\infty}(X,m) = L^{\infty}(\Lambda)$), we have
$$
\sqrt{\mathcal E(uv, uv)} \leq \| u \|_{\infty} \sqrt{\mathcal E(v,v)} + \| v \|_{\infty} \sqrt{\mathcal E(u,u)}.
$$
By Remark 1 on page 417 in \cite{H}, we know that for any $0 < s \leq 1$, the operator $-(-A)^{s}$ is also the generator of a symmetric Markovian semigroup $T_t^{s}$ on $L^2(\Lambda)$ obtained by subordination of order $s$ of $T_t$. (For more on subordination like $T_t^f = \int_0^{\infty} T_s \, \mu_t^f (ds)$, see for instance \cite{ARu}.)

The Dirichlet form associated to $T_t^{s}$ is $\mathcal E_{s}$ with $\Dom (\mathcal E_{s}) = \Dom \left( (-A)^{s/2} \right)$. We will apply Theorem 1.4.2 of \cite{FOT} to the Dirichlet form $\mathcal E_{s}$. We find
$$
\sqrt{\left\langle (-A)^{s/2}(uv), (-A)^{s/2}(uv) \right\rangle} \leq \| u \|_{\infty} \| (-A)^{s/2} v \| + \| v \|_{\infty} \| (-A)^{s/2} u \|,
$$
for any $u, v \in \Dom (\mathcal E_{s}) \cap L^{\infty}(X,m) = H^{s} \cap L^{\infty}(\Lambda)$ for $s \in (0,1]$. Since the inequality \eqref{eq:algebra} holds elementarily for $s = 0$, the proof is complete. 
\end{proof}

\end{appendix}

\section*{Acknowledgment} This research activity was carried out as part of the PRIN 2022 project ``Noise in fluid dynamics and related models''. The second author is also member of the ``Gruppo Nazionale per l'Analisi Matematica, la Probabilit\`a e le loro Applicazioni (GNAMPA)'', which is part of the ``Istituto Nazionale di Alta Matematica (INdAM)''.

%

\end{document}